\newtheorem{Theorem}{Theorem}[section]
\newtheorem{Definition}{Definition}[section]
\newtheorem{Example}{Example}[section]
\newtheorem{Proposition}[Theorem]{Proposition}
\newtheorem{Lemma}{Lemma}[section]
\newtheorem{Corollary}[Theorem]{Corollary}
\newtheorem{Assumption}{Assumption}[section]
\newtheorem{Remark}{Remark}[section]
\newcommand{\R}{\mathbb{R}}
\DeclareMathOperator{\diag}{diag}
\newcommand{\bfo}{{\bf 1}}
\renewcommand{\sp}{\operatorname{splx}}
\newcommand{\cH}{{H}}
\title{Triangulated Laman Graphs, Local Stochastic Matrices, and Limits of Their Products}
\author{Mohamed Ali Belabbas \footnote{Coordinated Science Laboratory, University of Illinois, Urbana-Champaign. Email: \texttt{belabbas@illinois.edu}} and Xudong Chen\footnote{Department of Electrical, Computer and Energy Engineering, University of Colorado, Boulder. Email: \texttt{xudong.chen@colorado.edu}}}
\date{}
\begin{document}

\maketitle

\begin{abstract}
We derive conditions on the products of stochastic matrices guaranteeing the existence of a unique limit invariant distribution. Belying our approach is the hereby defined notion of restricted triangulated Laman graphs. The main idea is the following: to each triangle in the graph, we assign a stochastic matrix. Two matrices can be adjacent in a product only if their corresponding triangles share an edge in the graph. We provide an explicit formula for the limit invariant distribution of the product in terms of the individual stochastic matrices.  
\end{abstract}

\section{Introduction}

The issue of convergence of infinite product of (row) stochastic matrices arises naturally in the study of finite-state Markov chains and in the design of consensus algorithms. As a result, it has been widely investigated in the past decades~\cite{wolfowitz1963products, chatterjee1977towards,daubechies1992sets, lorenz2005stabilization, bolouki2011consensus,touri2012product,chevalier2018convergent} from a variety of perspectives.   
The main problem investigated in the above works is whether the limit of a left product $\lim_{k\to\infty} A_k\cdots A_2A_1$ converges to a rank one matrix $\mathbf{1} w^\top$, where $\mathbf{1}$ is a vector of all ones and $w$ is a probability vector, i.e., entries of $w$ are nonnegative and sum to $1$.

A less studied, yet critical, problem is to characterize the limit  beyond the fact that it is rank one. This amounts to the characterization of the probability vector $w$. 
In the context of Markov chain, $w$ is the limiting distribution while in the context of (weighted) consensus, entries of $w$ are the averaging weights in the convex combination.  The problem is hard to tackle. 
Indeed, barring simple cases such as using only commuting matrices, the limit depends on the order in which the  stochastic matrices appear in the infinite products. This is true even if the matrices appearing in the product are chosen from a finite set. 
See~\cite{daubechies1992sets} for some illustrations of the above mentioned dependence. 
Thus, without knowing the entire sequence a priori, it is in general infeasible to characterize the limit (provided that it exists). In fact, even if one knows the order of the entire sequence, the analysis for obtaining an explicit formula of the limit is often intractable.

In this paper, we address this latter problem, i.e., we characterize limits of certain products of stochastic matrices. We elaborate below on the type of products considered in the paper.  
As is usually done, we use a graph $G = (V, E)$ to denote the states of Markov chain and the allowable transitions between these states. 
In the context of consensus, the graph  represents the information-flow topology between different agents.   
We introduce a class of graphs, termed triangulated Laman graphs (TLGs), and use their structure to define sets of stochastic matrices and the orders in which we can take their products. Specifically, given any TLG, we assign a stochastic matrix to each triangle in the graph.  
The matrix can be obtained by starting with the identity matrix and, then, replacing the principal submatrix corresponding to the nodes in the triangle with an arbitrary $3 \times 3$ rank-one stochastic matrix. 
We call these matrices ``local stochastic matrices'' as the transition probabilities (or the communications in the context of consensus) involve only the nodes in that triangle.  To describe the allowable products of those local stochastic matrices, we introduce the notion of derived graph associated with a TLG. It is a graph whose nodes are the triangles of a TLG, and whose edges capture a notion of adjacency  between these triangles --- two triangles are adjacent if they share a common edge.  
The allowable products are then the ones for which adjacent matrices correspond to adjacent nodes in the derived graph.

A major contribution of the paper is to show  that if a walk in the derived graph  visits every node infinitely often, then the limit of the associated product is a rank-one matrix. Moreover, the limit depends only on the {\em first node} of the walk. Because the derived graph is finite, there can only be finitely many different limits.   
The result is formulated in Theorem~\ref{th:main1}, and a complete characterization of these limits is provided in Sec.~\ref{ssec:characprodlim}. 

There are several implications of the above result. For example, any simple random walk on these derived graphs yield a convergent product of local stochastic matrices with probability one and the limits are independent of the sample paths but for their starting nodes (Corollary~\ref{cor:randomwalk}). Another consequence of the result concerns absolute probability vectors (APVs), which were introduced in~\cite{kolmogoroff1936theorie} to study the convergence of products of stochastic matrices (We recall its definition in Def.~\ref{def:apv}). 
Generically, the sequence of APVs depends on a particular convergent product, and, moreover, takes infinitely many different values, even when only a finite number of distinct matrices appear in the product. In contrast, we show in Corollary~\ref{cor:finiteanduniqueAPVs} that one can assign to a TLG, together with a set of local stochastic matrices, a {\it finite} set of vectors such that the sequence of APVs attached to {\it any}  allowable, convergent product of these local stochastic matrices takes values only from that finite set.

A large part of the novelty of this work lies in the introduction of TLGs and their derived graphs, as one may observe from the above description. To characterize their properties, we will obtain a recursive construction for them. This construction is akin to the celebrated Henneberg sequence that appears in rigidity theory~\cite{graver1993combinatorial}, and we thus call it Restricted Henneberg Construction (RHC). We prove  that any TLG can be obtained by an RHC and, reciprocally, any RHC yields a TLG. The proof may be of independent interest---indeed, TLGs have also appeared in earlier work on formation control~\cite{chen2017global}---but because it uses a set of ideas distinct from the ones used in the main part of the paper, we relegate it to the Appendix.

What is perhaps the closest line of work, in spirit, to the present is the work on gossiping~\cite{boyd2006randomized,he2011periodic,liu2011deterministic}. A gossip can be described, in terms of message passing, as an operation in which two agents communicate their values to each other and take the average. When described in terms of stochastic matrices, this yields a matrix which is the identity save for a 2-by-2 principal submatrix whose entries are $1/2$. It is shown that the left-product of such stochastic matrices converges, under some conditions, to the matrix with all entries $\frac{1}{n}$.  
More recently, it has been extended to clique gossiping~\cite{liu2019clique}, where $k$ agents in a clique perform an averaging operation. In these works, the convergence to the averaging matrix is a by-product of the fact that the matrices involved are in fact {\it doubly-stochastic}, i.e., all the row sums and column sums of the matrix are one.  

In terms of applications to consensus, besides the fact that our work allows for a control of the limiting probability vector while requiring minimal information about the allowable sequence (namely, the starting node), it also enables the implementation of simple {\it secure-by-design} consensus algorithms. 
Indeed, small networks are by nature more secure than larger networks, since by definition they contain fewer possible points of failure or attack. The smallest meaningful network in our case is the triangle. The local stochastic matrices are so that after each iteration, each node of the triangle have to agree on the same value. Furthermore, the adjacency rule is so that the next triangle to update has {\it two} nodes in common with the previous triangle. Hence the third node in the triangle can verify that it receives the same value from the other two nodes. This built-in redundancy adds an obvious layer of security to the updates and complements some existing secure consensus algorithm, e.g.~\cite{wang2019resilient}, but of course does not make them impervious to tampering.

The remainder of the paper is organized as follows: we end this section by introducing key notations and terminologies used throughout the paper. In Sec.~\ref{sec:tlg}, we introduce the basic objects used in the paper: namely, triangulated Laman graphs, their derived graphs, and local stochastic matrices. Several key properties will be established in the section as well. Next, in Sec.~\ref{sec:mainresults}, we state the main results of the paper, including an explicit formula for the limits of allowable convergent products. In Sec.~\ref{sec:proofs}, we prove the main results, save for Theorem~\ref{th:equihennebergrtl} concerning the construction of TLGs, which we relegate to the appendix. Numerical studies are provided in Sec.~\ref{sec:numerical}, validating the main results and showing that they do not hold if some of the assumptions are broken.  
The paper ends with conclusions.

\paragraph{Notations and conventions}
We denote by $G=(V,E)$ be a graph, with node set $V$ and edge set $E$. All graphs considered in the paper are simple, i.e., there have no self-arc. 
We use $v_i$ to denote a node of $G$. If $G$ is {\it undirected}, we denote an edge by  $(v_i,v_j)$, and if $G$ is {\it directed}, we denote an edge {\em from} $v_i$ {\em to} $v_j$ by $v_iv_j$ . We refer to $|V|$ as the size of $G$. 
Given a subset of nodes $V' \subseteq V$,  the subgraph of $G$ induced by $V'$ is defined as $G'=(V',E')$ where $E' =\{(v_i,v_j)\mid v_i,v_j \in V' \mbox{  and } (v_i,v_j) \in E\}$ (resp. $E' =\{v_iv_j\mid v_i,v_j \in V' \mbox{  and } v_iv_j \in E\}$).

We call a sequence of nodes $\gamma=v_1\cdots v_k$  a {\it walk} in $G$ 
if $(v_i, v_{i+1})$ (resp. $v_iv_{i+1}$) is an edge of $G$, for all $1 \leq i \leq k-1$. We denote by $\gamma \vee v_*$ the walk $v_1\cdots v_kv_*$ where $(v_k, v_*)$ (resp. $v_kv_*$) needs to be an edge in $G$ for the operation to be well-defined. We denote by $\gamma^{-1}$ the reverse walk $v_k\cdots v_1$.

We say that $\gamma$ is a {\it closed walk} if $\gamma$ is a walk with $\gamma_1=\gamma_k$. We emphasize that for our purpose, a closed walk has a well-defined starting node. A path is a walk without repetition of nodes. A cycle is a closed path, i.e., only the starting node and ending node are repeated. 
The {\it length} of $\gamma$ is the number of edges traversed by $\gamma$, counted with multiplicity. 
The {\it cardinality} of $\gamma$, denoted by $|\gamma|$, is the number of nodes in $\gamma$, counted with multiplicity as well.

A triangle in a graph is a cycle of length $3$. We denote triangles using the letter $\Delta$, and describe them as the sets of their constituent nodes, e.g., $\Delta=\{v_i,v_j,v_k\}$ and we can write $v_i \in \Delta$.

We denote by $\{e_1,\ldots, e_n\}$ the standard basis in $\R^n$. Denote by $\bfo_n$ the vector of all $1$'s of dimension $n$. We omit the index when the dimension is clear from the context. 
For any vector $w = (w_1,\ldots, w_n)$, we use shorthand notation 
$\min{w}:=\min_{1\le i \le n} w_i$.
We call $w$ a positive (resp. nonnegative) vector if each entry $w_i$ is positive (resp. nonnegative), and $w$ a {\it probability vector} if it is nonnegative and its entries sum to $1$. 
We denote by $\sp(n-1)$ the standard simplex in $\R^n$, which is comprised of all probability vectors.

\section{Triangulated Laman Graphs}\label{sec:tlg}

We present in this section a class of graphs, termed {\it  Triangulated Laman Graphs} (TLGs), as well as a simple iterative algorithm to construct them, termed {\it restricted Henneberg construction} (RHC). 

In order to introduce the TLGs, we first recall that a graph $G=(V,E)$ is said to be {\it triangulated} if for every cycle of length strictly greater than $3$, there is an edge joining two nonconsecutive vertices of the cycle.
We call any cycle of length $3$ a {\it triangle}. 
Any edge $e$ that belongs to only {\it one} triangle is called {\it simple.} TL graphs are also minimally rigid,  see the Appendix or~\cite{graver1993combinatorial} for a formal definition. We thus include ``Laman'' explicitly in the definition:  

\begin{Definition}[Triangulated Laman Graphs (TLGs)]\label{def:rtl}
A graph $G$ is a {\bf  triangulated Laman Graph} (TLG) if it is both triangulated and minimally rigid. 
\end{Definition}

Because all minimally rigid graphs can be obtained by a so-called Henneberg construction~\cite{graver1993combinatorial}, so can be all TLGs. However, not every Henneberg construction gives rise to a TLG. 
We now introduce below restricted Henneberg constructions that produce TLGs and have the property that {\it all} TLGs can be obtained by such construction:          
\begin{description}
\item[Initialization:] Start from a graph $G_3=(V_3,E_3)$  with $V_3=\{v_1,v_2,v_3\}$,  $E_3=\{(v_1,v_2),(v_2,v_3),(v_1,v_3)\}$. It consists of one triangle. 
\item[Inductive step:] Suppose that a subgraph $G_k$ of $k$ nodes $v_1,\ldots, v_k$ has been constructed. Pick an edge $e=(v_l,v_m)$ in $G_k$. 
Add a node $v_{k+1}$ and two edges $(v_l,v_{k+1}), (v_m,v_{k+1})$ to $G_k$ to obtain $G_{k+1}$
\end{description}

\begin{Definition}
We refer to the above construction as a {\bf restricted Henneberg construction} {\em ({\bf RHC})}.
\end{Definition}

We have the following result:   

\begin{Theorem}\label{th:equihennebergrtl}
A graph is a TLG if and only if it can be constructed by a restricted Henneberg construction. 
\end{Theorem}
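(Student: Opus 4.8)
The plan is to prove the two implications separately. For the direction that every RHC yields a TLG, I would argue by induction on the number of nodes $k$. The base case $G_3$ is a single triangle, which is clearly triangulated and minimally rigid (it has $3$ nodes and $3 = 2\cdot 3 - 3$ edges, and every subset of $k'$ nodes spans at most $2k' - 3$ edges). For the inductive step, the operation of adding a node $v_{k+1}$ joined to the two endpoints $v_l, v_m$ of an existing edge $(v_l, v_m)$ is exactly a Henneberg type-I move (vertex addition of degree $2$), which is the classical operation known to preserve minimal rigidity; this part I would simply cite from~\cite{graver1993combinatorial}. The only thing requiring genuine care is that the move also preserves being \emph{triangulated}: I would show that any new cycle of length $>3$ created in $G_{k+1}$ either passes through $v_{k+1}$ --- in which case it uses both edges $(v_l,v_{k+1})$ and $(v_m,v_{k+1})$, so replacing that length-$2$ detour through $v_{k+1}$ by the edge $(v_l,v_m)$ gives a shorter cycle in $G_k$ that, by the inductive hypothesis, has a chord, which (together with the edge $(v_l,v_m)$ itself when appropriate) supplies the needed chord for the original cycle --- or does not pass through $v_{k+1}$, in which case it already lies in $G_k$ and is handled by induction.

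For the converse, that every TLG arises from an RHC, I would again induct on $|V|$, and the heart of the matter is the following claim: every TLG $G$ with $|V| \ge 4$ contains a node $v$ of degree exactly $2$ whose two neighbors are adjacent (i.e., $v$ together with its two neighbors forms a triangle, and in fact the edge between the neighbors must be present). Given such a $v$, deleting it yields a smaller graph $G' = G - v$; one checks $G'$ is still minimally rigid (removing a degree-$2$ vertex is the inverse Henneberg type-I move) and still triangulated (deleting a vertex cannot destroy triangulatedness, since any cycle in $G'$ is a cycle in $G$ with its chord also in $G'$, as that chord cannot be incident to the deleted $v$ of degree $2$ unless it is one of $v$'s two edges, which are gone). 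Then by the inductive hypothesis $G'$ has an RHC, and appending the move that re-adds $v$ on the edge between its two neighbors extends this to an RHC for $G$.

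The main obstacle, and where I would spend most of the effort, is establishing the existence of that special degree-$2$ vertex. Minimal rigidity (the Laman count: $|E| = 2|V| - 3$ with the hereditary sparsity bound) forces the average degree to be just under $4$, hence there is \emph{some} vertex of degree $\le 3$; but a generic minimally rigid graph need not have a degree-$2$ vertex, so triangulatedness must be used essentially here. The idea is to exploit the simple edges: in a triangulated minimally rigid graph one can show there must exist a ``leaf triangle,'' i.e., a triangle two of whose edges are simple, and the common vertex of those two simple edges then has degree exactly $2$. To locate a leaf triangle I would consider the structure of how triangles overlap --- setting up an auxiliary argument on the triangles of $G$ (a forerunner of the derived graph) and showing that minimal rigidity prevents every triangle from sharing two or more of its edges with other triangles, because too much triangle-overlap would either create a non-chorded long cycle or violate the $2|V|-3$ edge count on some subset. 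Making this counting argument fully rigorous --- ruling out all the configurations in which no vertex of degree $2$ exists --- is the delicate step; everything else is routine induction and citation of standard Henneberg theory. Because this argument is combinatorial in flavor and somewhat lengthy, it is natural (as the authors note) to place the full details in the appendix.
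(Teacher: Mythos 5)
Your forward direction (RHC $\Rightarrow$ TLG) is correct and essentially identical to the paper's Lemma~\ref{lem:sufficiency}: cite that the node-add move preserves the Laman property, and observe that any long cycle through the new vertex must pass through both of its neighbors, whose connecting edge supplies a chord. No issues there.

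For the converse, you take a genuinely different route from the paper, and this is where the gap lies. The paper never searches for a removable vertex: it starts from an \emph{arbitrary} Henneberg construction of $G$ (guaranteed to exist since $G$ is Laman), proves that every intermediate graph in such a construction must already be triangulated (Lemma~\ref{lem:tlsubgraphs}), deduces that only two kinds of operations can occur --- node-adds on adjacent vertices, or edge-splits on a simple edge whose endpoints form a triangle with the chosen node (Lemma~\ref{lem:badoperation}) --- and then shows the edge-splits can be rewritten as node-adds by permuting the order of the construction (using Proposition~\ref{prop:hennebergfromany}). Your plan instead reduces everything to the claim that every TLG on at least $4$ nodes contains a degree-$2$ vertex whose two neighbors are adjacent, which you would then peel off. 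The peeling and re-attachment steps you describe are fine: deleting a degree-$2$ vertex preserves the Laman count and hereditary sparsity, and preserves triangulatedness since no chord of a surviving cycle can be incident to the deleted vertex.

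The gap is that the central claim is not proved; it is only announced as ``the delicate step.'' Two specific points need real work. First, the existence of a ``leaf triangle'' (a triangle with two simple edges) in every TLG is exactly the content you would have to establish, and neither the degree count ($|E|=2n-3$ only yields a vertex of degree at most $3$, as $K_{3,3}$ shows for general Laman graphs) nor a vague appeal to ``too much triangle overlap'' constitutes a proof; this is where essentially all of the difficulty of the converse is concentrated. Second, even granting a leaf triangle $\{u,v,w\}$ with $(u,v)$ and $(v,w)$ simple, your assertion that $v$ then has degree exactly $2$ is not immediate: a priori $v$ could have a further neighbor $x$ with $(v,x)$ lying in a triangle $\{v,x,y\}$ disjoint (edge-wise) from $\{u,v,w\}$, which contradicts neither simplicity of $(u,v)$ nor of $(v,w)$. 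Ruling this out amounts to showing that the triangles through $v$ form an edge-connected fan --- which is Proposition~\ref{prop:dgconnected} of the paper, itself proved \emph{using} the RHC characterization you are trying to establish, so you cannot invoke it here without circularity. The claim you need is true, and your strategy could in principle be completed, but as written the converse direction rests on an unproven lemma whose proof is the entire substance of that direction.
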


A proof of the theorem is provided in the Appendix. 

\paragraph{Derived graphs and their properties}
We now introduce the notion of derived graph associated with a triangulated graph $G$. Roughly speaking, the derived graph is used to reflect the adjacency of triangles in $G$, see Fig.~\ref{fig:exampledg} for an illustration.

\begin{Definition}[Derived graph]\label{def:derived}
Let $G$ be a triangulated graph. The derived graph $D_G$ of $G$ is an  undirected graph defined as follows: Each node $\Delta_i$ of $D_G$ corresponds to a triangle of $G$. If two distinct triangles corresponding to $\Delta_i$ and $\Delta_j$ share a common edge in $G$, then an edge $(\Delta_i, \Delta_j)$ is in $D_G$.  
\end{Definition}

Throughout the paper,  
we will view $\Delta_i$ both as a node of $D_G$, and as a subgraph of $G$---more precisely, a subgraph induced by three adjacent nodes. 
We will write $v \in \Delta_i$ (resp. $e \in \Delta_i$) to denote that the vertex $v$ (resp. edge $e$) is in the subgraph $\Delta_i$.

\begin{figure}
\centering
\subfloat[\label{fig:nonrtl}]{
\includegraphics{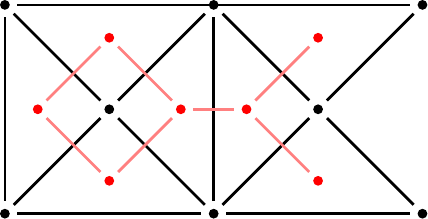}
}\qquad\qquad 
\subfloat[\label{fig:rtl}]{
\includegraphics{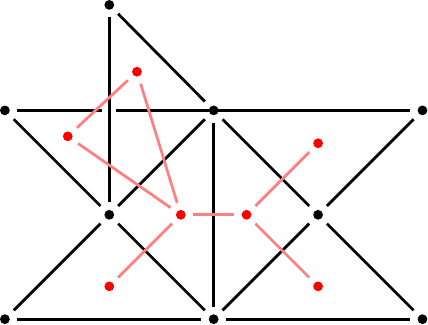}
}
\caption{Two triangulated graphs (in black) and their derived graphs (in red). The graph on the left is not a TLG, as it contains $8$ nodes and $14$ edges. The graph on the right is a TLG.  }\label{fig:exampledg}
\end{figure}

We next establish a few relevant properties for the derived graphs of TLGs. We start with the following fact: 

\begin{Proposition}\label{prop:tl}
Let $G$ be a TLG on $n$ nodes. Then, there are  $(n-2)$ triangles in $G$ and the derived graph $D_G$ is a connected, triangulated graph on $(n - 2)$ nodes.
\end{Proposition}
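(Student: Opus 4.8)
The plan is to prove both claims at once by induction along a restricted Henneberg construction $G_3, G_4, \dots, G_n = G$, which exists by Theorem~\ref{th:equihennebergrtl}. To make the induction go through I would carry a strengthened hypothesis on each $G_k$: (i) $G_k$ has exactly $k-2$ triangles; (ii) every edge of $G_k$ belongs to at least one triangle; (iii) $D_{G_k}$ is connected and triangulated. For $k=3$ all three are immediate, since $G_3$ is a single triangle and $D_{G_3}$ is a single vertex.

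For the inductive step, $G_{k+1}$ is obtained from $G_k$ by choosing an edge $e=(v_l,v_m)$ and adding a vertex $v_{k+1}$ together with the edges $(v_l,v_{k+1})$ and $(v_m,v_{k+1})$. The crucial elementary observation is that the unique triangle of $G_{k+1}$ not already present in $G_k$ is $\Delta^{\star}:=\{v_l,v_m,v_{k+1}\}$: any new triangle must contain a new edge, hence an edge incident to $v_{k+1}$, and since $v_{k+1}$ has exactly the two neighbors $v_l,v_m$, that triangle can only be $\Delta^{\star}$. This gives (i), as the triangle count increases by exactly one, from $k-2$ to $(k+1)-2$. Claim (ii) is equally direct: old edges retain a triangle they already had in $G_k$, and the two new edges both lie in $\Delta^{\star}$. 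Since $D_G$ has one node per triangle of $G$, (i) also yields that $D_G$ has $n-2$ nodes, which is the first assertion of the Proposition.

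It remains to establish (iii) for $G_{k+1}$. First I would check that $D_{G_k}$ is the subgraph of $D_{G_{k+1}}$ induced by the old triangles: the new edges at $v_{k+1}$ belong to no old triangle, so the edge-sharing relation among old triangles is unchanged. Next, $\Delta^{\star}$ is adjacent in $D_{G_{k+1}}$ to precisely the old triangles containing $e$ --- again because the two edges of $\Delta^{\star}$ other than $e$ are incident to $v_{k+1}$ and lie in no old triangle. Writing $S$ for this set of old triangles, we have $S\neq\emptyset$ by (ii) applied to $G_k$, so connectedness of $D_{G_k}$ propagates to $D_{G_{k+1}}$; moreover $S$ is a clique of $D_{G_k}$, because any two triangles that both contain $e$ share the edge $e$ and are therefore adjacent. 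Thus $D_{G_{k+1}}$ is obtained from the triangulated graph $D_{G_k}$ by adjoining one new vertex whose neighborhood is a clique.

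The hard part --- and the one classical ingredient I would cite or include a short argument for --- is that adjoining to a triangulated (chordal) graph a vertex whose neighborhood is a clique again produces a triangulated graph: prepend the new vertex to a perfect elimination ordering of $D_{G_k}$ to obtain one for $D_{G_{k+1}}$. Granting this, $D_{G_{k+1}}$ is triangulated, completing the induction. Everything apart from this lemma is a careful but routine accounting of exactly which triangles and which edge-adjacencies a single Henneberg step creates; the one subtlety worth double-checking is that $S$ is indeed nonempty and indeed a clique, since both the connectivity and the triangulatedness of $D_{G_{k+1}}$ rest on these two facts.
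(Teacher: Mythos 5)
Your proposal is correct and follows essentially the same route as the paper: induction along the RHC, observing that the unique new triangle $\{v_l,v_m,v_{k+1}\}$ is adjacent in the derived graph exactly to the old triangles containing $e$, which form a clique. The only difference is the last step, where the paper proves directly that any cycle of length greater than $3$ through the new node has both of its neighbors in that clique (hence a chord), whereas you invoke the perfect-elimination-ordering characterization of chordal graphs; both are valid, and your explicit invariant that every edge lies in a triangle is a reasonable way to make the connectivity claim (which the paper treats as clear) precise.
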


\begin{proof}
It should be clear from the RHC that $G$ has $(n-2)$ triangles and that $D_G$ is connected. We show below that $D_G$ is triangulated. 
The proof will be carried out by induction on the number of nodes in $G$. For the base case $n=3$, $G$ contains one triangle and $D_G$ is comprised of a single node. This proves the base case.

For the inductive step, we assume that the statement holds for $(n-1)$ and prove it for $n$. 
Let $G$ be a TLG on $n$ nodes. By Theorem~\ref{th:equihennebergrtl}, it admits an RHC. Following this RHC up to step $(n-3)$ yields a TLG $G'$ on $(n-1)$ nodes, which is a {\it subgraph} of $G$. By the induction hypothesis, the derived graph $D_{G'}$ is triangulated. We now focus on the last step of the RHC, yielding $G$ from $G'$. Denote by $e=(v_l,v_m)$ the edge in $G'$ selected, and by $v_n$ the newly added node. Denote by $\Delta_{i_1},\ldots,\Delta_{i_p}$ the triangles in $G'$ that contain the edge $e$. Then, the subgraph of $D_{G'}$ induced by these nodes is the complete graph $K_p$.

The newly added triangle $\Delta_{n-2}=\{v_l,v_m,v_n\}$ is a node in $D_G$.  It is connected in $D_G$ to {\it all} the nodes $\Delta_{i_1},\ldots,\Delta_{i_p}$. We thus conclude that the subgraph of $D_G$ induced by $\Delta_{i_1},\ldots,\Delta_{i_p}\Delta_{n}$ is a {\it complete graph on $(p+1)$ nodes}. We denote by $K_{p+1}$ the clique. 
We now show that $G$ is triangulated. By the induction hypothesis, it suffices to show that cycles of length greater than $3$ containing $\Delta_{n-2}$ have a chord. To this end, observe that if $\Delta_{n-2}$ is in a cycle of length greater than $3$, then $\Delta_{n-2}$ has 2 distinct neighbors  in the cycle. Denote them by $\Delta_{i_j}$ and $\Delta_{i_k}$. Then, necessarily,  both of them belong to $K_{p+1}$. Hence, the edge  $(\Delta_{i_j},\Delta_{i_k})$ is a chord of the cycle. This completes the proof.
\end{proof}

In the sequel, we will require an RHC that yields a given  TLG  $G$ with a particular initialization. We thus show the following:

\begin{Proposition}\label{prop:hennebergfromany}
Let $G$ be an TLG on $n$ nodes with triangles $\Delta_i, 1 \leq i \leq n-2$. Then, for any $\Delta_i$, there exists an RHC starting with $\Delta_i$ that yields $G$.
\end{Proposition}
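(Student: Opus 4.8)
The plan is to induct on $n = |V(G)|$, peeling off one triangle at a time while keeping $\Delta_i$ intact. Call a vertex $v$ of a TLG \emph{removable} if $\deg(v) = 2$ and its two neighbors are joined by an edge; this is exactly the condition under which $v$ can be the vertex added in a final RHC step. The base case $n = 3$ is immediate ($G = \Delta_i$, and the RHC is its initialization). For the inductive step, suppose we can locate a removable vertex $v$ of $G$ with $v \notin \Delta_i$. Then $G' := G - v$ is a TLG on $n-1$ nodes (justified below), and since no vertex or edge of $\Delta_i$ involves $v$, the set $\Delta_i$ still induces a triangle in $G'$. By the induction hypothesis there is an RHC of $G'$ starting with $\Delta_i$; appending to it the step ``add $v$, joined to the edge spanned by the two neighbors of $v$ in $G$'' (a legal step, since that edge lies in $G' = G - v$) yields an RHC of $G$ that produces exactly $G$ and starts with $\Delta_i$. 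So the whole argument reduces to two facts: (a) every TLG on $n \ge 4$ nodes has a removable vertex outside any prescribed triangle $\Delta_i$, and (b) deleting a removable vertex from a TLG yields a TLG.

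For (a) I would prove two sublemmas. First, \emph{in a TLG on $n \ge 4$ nodes no two removable vertices are adjacent}: if $v, v'$ were adjacent removable vertices and $w$ is the remaining neighbor of $v$, one checks $N(v) = \{v', w\}$ and $N(v') = \{v, w\}$, so $\{v, v', w\}$ is a triangle all three of whose edges lie in no other triangle; its node in the derived graph $D_G$ is then isolated, contradicting the connectedness of $D_G$ from Proposition~\ref{prop:tl} (for $n \ge 4$, $D_G$ has $n-2 \ge 2$ nodes). In particular at most one removable vertex can lie in $\Delta_i$. Second, \emph{every TLG on $n \ge 4$ nodes has at least two removable vertices}: induct on $n$; for $n = 4$ the graph is $K_4$ minus an edge, whose two degree-$2$ vertices are removable. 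For $n \ge 5$, take any RHC of $G$ (Theorem~\ref{th:equihennebergrtl}), let $v_n$ be its last vertex (removable in $G$) attached to an edge $(v_l, v_m)$, and let $G' = G - v_n$, a TLG on $n-1 \ge 4$ nodes. By induction $G'$ has two removable vertices, which by the first sublemma are non-adjacent in $G'$, hence are not the two \emph{adjacent} endpoints $v_l, v_m$; thus at least one of them keeps degree $2$ in $G$ and stays removable there, and together with $v_n$ this gives two removable vertices of $G$. Combining the sublemmas, $G$ has $\ge 2$ removable vertices but at most one lies in $\Delta_i$, so one lies outside $\Delta_i$, proving (a).

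For (b): if $v$ is removable then $G - v$ remains triangulated, since every cycle of length $>3$ in $G - v$ is a cycle of $G$ whose chord in $G$ joins two of its vertices — none of them $v$ — and hence survives; and $G - v$ remains minimally rigid, since it has $(2n-3) - 2 = 2(n-1)-3$ edges and the Laman sparsity bound $|E(H)| \le 2|V(H)| - 3$ is inherited by every subgraph, so the Laman characterization of minimal rigidity (see~\cite{graver1993combinatorial}) applies. This closes the induction.

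The main obstacle is fact (a), and specifically the interplay of its two sublemmas: a naive induction on ``there are $\ge 2$ removable vertices'' fails because re-adding a vertex in an RHC step can destroy a removable vertex when that vertex is an endpoint of the edge used. The non-adjacency sublemma is precisely what prevents \emph{both} previously found removable vertices from being destroyed simultaneously, since the two endpoints of an edge are adjacent; this is the crux that makes the count survive the inductive step, and it is the point at which the structure of the derived graph (its connectedness) enters the proof.
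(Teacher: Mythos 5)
Your proof is correct, but it takes a genuinely different route from the paper's. The paper also peels off a degree-$2$ vertex, but it always removes the \emph{last} vertex $v_n$ of some fixed RHC: when the prescribed triangle $\Delta_i$ lies in $G'=G-v_n$ the induction applies directly, and when $\Delta_i$ is the last triangle $\{v_1,v_2,v_n\}$ the paper reroots the construction by an explicit two-step swap (start from $\{v_1,v_2,v_n\}$, add back the apex $v_k$ of a triangle $\Delta_j=\{v_1,v_2,v_k\}$ sharing the edge $(v_1,v_2)$, then follow an RHC of $G'$ that starts at $\Delta_j$). You instead avoid the case split entirely by proving the stronger structural fact that every TLG on $n\ge 4$ nodes has at least two pairwise non-adjacent removable vertices, so one can always delete a vertex outside the prescribed $\Delta_i$; this is the TLG analogue of the ``two non-adjacent simplicial vertices'' theorem for chordal graphs. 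The cost is two extra sublemmas and reliance on Laman's counting characterization (Lemma~\ref{lem:lamancondition}) and on the connectedness of $D_G$ (Proposition~\ref{prop:tl}) to show $G-v$ is again a TLG and to rule out adjacent removable vertices; the benefit is a uniform inductive step and reusable structural information about TLGs. Both arguments invoke Theorem~\ref{th:equihennebergrtl} to obtain an initial RHC, so your dependency structure matches the paper's.
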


\begin{proof}
The proof will be carried out by induction on the number of nodes in $G$. The base case of $n=3$ is trivially true. We thus assume that the result holds for any TLG on $(n-1)$ nodes and prove that it holds for TLGs on $n$ nodes.  

Let $G$ be an TLG on $n$ nodes with $(n-2)$ triangles. Then, there is an RHC that builds $G$ by Theorem~\ref{th:equihennebergrtl}.  Without loss of generality, we let $\Delta_{n-2}= \{v_1, v_2, v_{n}\}$ (resp. $v_n)$ be the last triangle (resp. node) appearing in the RHC. Then, the degree of $v_{n}$ is $2$ and $(v_1, v_2)$ is a common edge shared by $\Delta_{n-2}$ with at least one another triangle, say $\Delta_{j} = \{v_1, v_2, v_{k}\}$ for some $k \leq n-1$.

Now, let $G'$ be the subgraph of $G$ induced by the nodes $v_1,\ldots, v_{n-1}$. Then, $G'$ is constructed by stopping an RHC construction after $(n-3)$ steps, and is thus a TLG graph on $(n-1)$ nodes.  By the induction hypothesis, for each triangle $\Delta_i \subset G'$, there exists an RHC starting with $\Delta_i$  that produces $G'$. Note that $\Delta_i$ is a also a triangle of $G$. Continuing  the above RHC by one step joining node $v_{n}$ to nodes $v_1$ and $v_2$ yields an RHC that builds $G$.   

It remains to show that there is an RHC that produces $G$ starting with triangle $\Delta_{n-2}$. This a two-step construction: First, starting from $\Delta_{n-2}$, we add node $v_k$ and connect it to nodes $v_1$ and $v_2$, thus obtaining a graph with $4$ nodes and $2$ triangles ($\Delta_{n-2}$ and $\Delta_j$). This graph is clearly a TLG. For the second step, we appeal again to the induction hypothesis, to obtain an RHC that builds $G'$ starting with $\Delta_{j}$. Since the  concatenation of two RHCs is an RHC, using the two steps above, we have obtained an RHC that produces $G$ from $\Delta_{n-2}$.
\end{proof}

\begin{figure}[h]
    \centering
\includegraphics{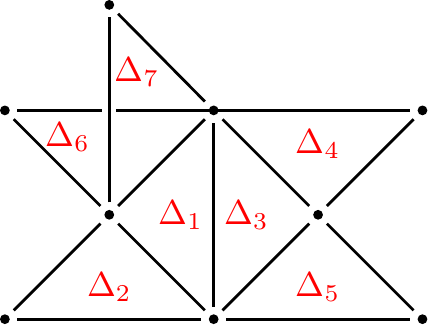}
\caption{the TLG from Fig.~\ref{fig:rtl} is reproduced and the are triangles labeled in the order of appearance with respect to a certain RHC.}
    \label{fig:constructxoxox}
\end{figure}

\begin{Example}
\normalfont
We illustrate here Proposition~\ref{prop:hennebergfromany}. 
Consider the TLG of Fig.~\ref{fig:constructxoxox}. From  Theorem~\ref{th:equihennebergrtl}, we know that there exists an RHC producing it. We label the triangles in the order of appearance with respect to the RHC. The proposition says that one can find an RHC yielding the same $G$ starting from {\it any} $\Delta_i$. Starting from $\Delta_4$, a valid RHC is, e.g.,  $\Delta_4\Delta_3\Delta_5\Delta_1\Delta_2\Delta_6\Delta_7$. 
\end{Example}

The next few propositions shed more light on the  structure of the derived graph $D_G$. 
In particular, both the triangulated and Laman character of $G$ will come into play to show the existence of so-called bottleneck nodes in $D_G$ (see Definition~\ref{def:bottlen} below). These bottleneck nodes will in turn be  essential ingredients in obtaining the limits of the products of local stochastic matrices.

\begin{Proposition}\label{prop:cycleshareedge}
Let $\Delta_1\Delta_2\cdots \Delta_p\Delta_1$ be a cycle in $D_G$ of length greater than~$3$. Then,  
all of these triangles in $G$ share a common edge. In particular, the subgraph of $D_G$ induced by nodes $\Delta_1,\ldots,\Delta_p$ is a complete graph. 
\end{Proposition}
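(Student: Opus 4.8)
The plan is to use the Restricted Henneberg Construction (RHC) as the inductive skeleton, exactly as in the proof of Proposition~\ref{prop:tl}, but now tracking the common-edge structure. Let me first isolate the key combinatorial observation: whenever a node $v_n$ is added in an RHC step along an edge $e = (v_l, v_m)$, the set of triangles of $G$ containing $e$ is exactly $\{\Delta_{i_1}, \dots, \Delta_{i_p}, \Delta_{n-2}\}$, and the subgraph of $D_G$ induced by these is the complete graph $K_{p+1}$ --- this was established inside the proof of Proposition~\ref{prop:tl}. So the claim to prove is essentially the converse flavor: a cycle of length $> 3$ in $D_G$ must live inside one such "edge-clique". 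I would proceed by induction on $n = |V(G)|$. The base cases $n = 3, 4, 5$ are trivial or immediate since $D_G$ then has at most $3$ nodes and contains no cycle of length $> 3$.

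For the inductive step, take a TLG $G$ on $n$ nodes, let $v_n$ be the last node added by an RHC (using Theorem~\ref{th:equihennebergrtl}), say along edge $e = (v_l, v_m)$, with $G'$ the TLG on $n-1$ nodes obtained by deleting $v_n$. Let $C = \Delta_1 \Delta_2 \cdots \Delta_p \Delta_1$ be a cycle in $D_G$ of length $> 3$. The first case is that $C$ does not pass through $\Delta_{n-2} = \{v_l, v_m, v_n\}$. Then every $\Delta_j$ in $C$ is a triangle of $G'$; moreover every edge of $C$ is an edge of $D_{G'}$, because two triangles of $G'$ that share an edge in $G$ share that same edge in $G'$ (adding $v_n$ creates no new shared edges among old triangles). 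Hence $C$ is a cycle of length $> 3$ in $D_{G'}$, and the induction hypothesis applies directly. The second case is that $C$ passes through $\Delta_{n-2}$. Since $v_n$ has degree $2$ in $G$, the only edges of $G$ incident to the triangle $\Delta_{n-2}$ that can be shared with another triangle is $e = (v_l, v_m)$ --- the two edges $(v_l, v_n)$ and $(v_m, v_n)$ are simple. Therefore every neighbor of $\Delta_{n-2}$ in $D_G$ shares the edge $e$, i.e., lies in the edge-clique of $e$. In particular the two neighbors of $\Delta_{n-2}$ in the cycle $C$, say $\Delta_{i_j}$ and $\Delta_{i_k}$, both contain $e$, so the edge $(\Delta_{i_j}, \Delta_{i_k})$ is present in $D_G$ and is a chord of $C$. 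This, by itself, does not finish the job — it only shows $C$ is chorded, not that all its triangles share an edge. To close this case I would instead argue as follows: replace $C$ by the shorter closed walk $C'$ obtained by replacing the sub-path $\Delta_{i_j}\,\Delta_{n-2}\,\Delta_{i_k}$ with the edge $\Delta_{i_j}\Delta_{i_k}$. If $C'$ still has length $> 3$, apply the induction argument (it now avoids $\Delta_{n-2}$, reducing to the first case / to $D_{G'}$) to conclude all triangles of $C'$ share a common edge; but then in particular $\Delta_{i_j}$ and $\Delta_{i_k}$ share an edge with all others, and since $\Delta_{i_j}, \Delta_{i_k}$ share exactly the edge $e$ with $\Delta_{n-2}$ (and $e$ is their only common edge that can be common with the clique), the common edge of $C'$ must be $e$; then $\Delta_{n-2}$ also contains $e$, so all of $C$ shares $e$. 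If $C'$ has length exactly $3$, then $C$ has length $4$: $C = \Delta_{i_j}\,\Delta_{n-2}\,\Delta_{i_k}\,\Delta_m\,\Delta_{i_j}$, and one checks directly that $\Delta_{i_j}, \Delta_{i_k}, \Delta_m$ are triangles of $G'$ forming a triangle in $D_{G'}$, hence by induction hypothesis applied to... — wait, a triangle is not length $> 3$; here I would instead use the already-proven Proposition~\ref{prop:tl} structure: in $G'$, the three mutually edge-sharing triangles $\Delta_{i_j}, \Delta_m, \Delta_{i_k}$ must all contain a common edge, which a short direct argument shows forces it to be $e$, and again $\Delta_{n-2}$ contains $e$. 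The final sentence of the statement (that the induced subgraph on $\Delta_1,\dots,\Delta_p$ is complete) is then immediate: all $p$ triangles contain the common edge $e$, so pairwise they share $e$ and are adjacent in $D_G$.

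The main obstacle I anticipate is precisely the bookkeeping in Case 2 when the cycle is short (length $4$), where the clean "pass to a shorter cycle of length $> 3$" move fails and one must fall back on the edge-clique structure of Proposition~\ref{prop:tl} together with the fact that three triangles sharing pairwise distinct edges cannot form a triangle in $D_G$ unless they share a single common edge. The subtle point to verify carefully is that \emph{two} triangles of $G$ can share at most one edge (immediate, since two triangles sharing two edges would coincide), which is what pins down the common edge to be $e$ in all the sub-cases. A cleaner alternative I would also consider is a direct (non-inductive) argument: given the cycle $C$ in $D_G$, pick any node $\Delta_q$ of $C$ that is "last" in some RHC compatible with the others (using Proposition~\ref{prop:hennebergfromany} to choose a convenient RHC), and run essentially the Case-2 argument once; but the inductive version above is likely the safest route to a complete proof.
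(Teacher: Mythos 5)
Your overall strategy is genuinely different from the paper's: you induct on the number of nodes of $G$ via the RHC, whereas the paper inducts on the length of the cycle, using the triangulatedness of $D_G$ (from Prop.~\ref{prop:tl}) to find a chord and split the cycle into two shorter ones. Your Case~1 and your sub-case where $C'$ still has length $>3$ are sound: the facts you rely on there --- that the two new edges at $v_n$ are simple, so every $D_G$-neighbor of $\Delta_{n-2}$ must contain $e=(v_l,v_m)$, and that two distinct triangles share at most one edge, which pins the common edge of $C'$ down to $e$ --- are correct and suffice.

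The genuine gap is in the length-$4$ sub-case. There you need the claim that three pairwise-adjacent triangles $\Delta_{i_j},\Delta_{i_k},\Delta_m$ of $G'$ share a common edge, and you attribute it to ``the already-proven Proposition~\ref{prop:tl} structure.'' But Prop.~\ref{prop:tl} only establishes the converse implication (triangles containing a fixed edge of $G$ form a clique in $D_G$); it says nothing about whether a $3$-cycle in $D_G$ must arise this way. That claim is exactly the $p=3$ base case of the paper's own induction, it is where minimal rigidity is indispensable --- it is \emph{false} for general triangulated graphs (see Fig.~\ref{sfig:characterizingDG1}: three pairwise-adjacent triangles sitting on a $K_4$) --- and the paper proves it by showing that otherwise some four nodes induce $6 > 2\cdot 4-3$ edges, violating Laman's condition. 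Your ``short direct argument'' is never supplied, so as written the proof is incomplete. Two fixes: either insert that Laman-count argument, or (cleaner, and entirely within your framework) strengthen your induction hypothesis to cover cycles of length $3$ as well. With the stronger hypothesis the induction closes with no extra work: a $3$-cycle avoiding $\Delta_{n-2}$ lives in $D_{G'}$ and is handled by the induction hypothesis, while a $3$-cycle through $\Delta_{n-2}$ is immediate because its other two nodes are neighbors of $\Delta_{n-2}$ and hence both contain $e$; this also makes the separate length-$4$ analysis unnecessary, since $C'$ is then always covered by the induction hypothesis.
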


\begin{proof}
The proof is carried out by induction on the length $p$ of the cycle. 

For the base case $p = 3$, we 
first note that two distinct triangles can share at most one edge. 
Assume, without loss of generality, that $\Delta_{i}=\{v_1,v_2,v_3\}$ and $\Delta_j = \{v_1, v_2, v_4\}$, i.e., $(v_1, v_2)$ is the edge shared by $\Delta_i$ and $\Delta_j$.  
If the same edge $(v_1, v_2)$ is also shared by $\Delta_k$, then we are done. Suppose not, say $\Delta_i$ and $\Delta_k$ share edge $(v_1, v_3)$; then $\Delta_j$ and $\Delta_k$ must share edge $(v_1, v_4)$ (it cannot be $(v_2,v_4)$ because otherwise, $\Delta_k$ has four distinct nodes $v_1,\ldots, v_4$.) But, then, the subgraph $G'$ of $G$ induced by $v_1,\ldots, v_4$ is $K_4$. The total number of edges in $K_4$ is $6$, which violates the Laman condition, which states that the number of edges of any induced subgraph on $k$ nodes does not exceed $(2k - 3)$. This proves the base case. See Fig.~\ref{sfig:characterizingDG1} and Fig.~\ref{sfig:characterizingDG2} for illustration.

\begin{figure}[h]
    \centering
    \subfloat[\label{sfig:characterizingDG1}]{
\includegraphics{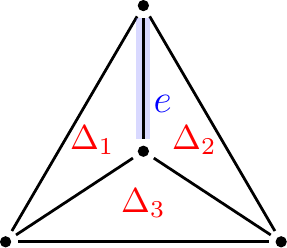}
}\quad 
\subfloat[\label{sfig:characterizingDG2}]{
\includegraphics{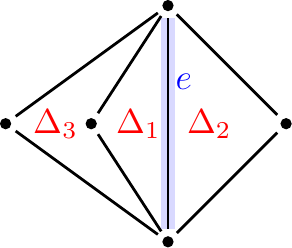}
}\hspace{.3cm}
\subfloat[\label{sfig:characterizingDG3}]{\includegraphics{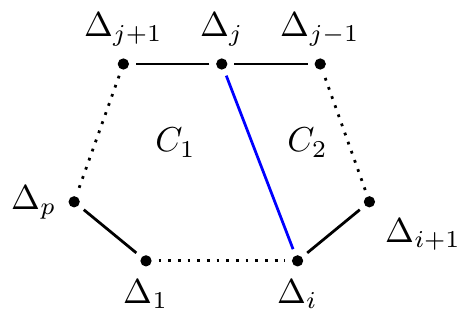}
}
\caption{The three triangles $\Delta_1$, $\Delta_2$, and $\Delta_3$ in~\ref{sfig:characterizingDG1} and~\ref{sfig:characterizingDG2} are pairwise adjacent, so the corresponding derived graphs are triangles. However, the graph in~\ref{sfig:characterizingDG1} cannot be a subgraph of a TLG because it violates the Laman condition for minimal rigidity. Fig.~\ref{sfig:characterizingDG3} illustrates the two cycles $C_1$ and $C_2$ introduced in~\eqref{eq:c1c2}.}\label{fig:characterizingDG}
\end{figure}

For the inductive step we assume that the statement holds for any $p'\le p - 1$ and prove for $p$. Since $p\ge 4$, by Prop.~\ref{prop:tl}, there is a chord $(\Delta_{i}, \Delta_j)$, with $1\le i < j \le p$, in the cycle. Using this chord, we obtain  the following two cycles: 
\begin{equation}\label{eq:c1c2}
\begin{array}{rl}
C_1&:=\Delta_1\cdots\Delta_i\Delta_j\Delta_{j + 1} \cdots\Delta_p\Delta_1\\
C_2&:=\Delta_{i}\Delta_{i + 1} \cdots \Delta_{j-1}\Delta_j \Delta_i 
\end{array}
\end{equation}
of lengths  strictly less than $p$. See Fig.~\ref{sfig:characterizingDG3} for illustration. 
By the induction hypothesis, the triangles in each cycle $C_k$, for $k = 1, 2$, share a common edge $e_k$. Furthermore, note that nodes $\Delta_i$ and $\Delta_j$ appear in both $C_1$ and $C_2$ and, hence, $e_1$ and $e_2$ are shared by both $\Delta_i$ and $\Delta_j$. If $e_1$ and $e_2$ are distinct, then $\Delta_i=\Delta_j$, which is a contradiction. We thus conclude that $e_1 = e_2 = : e$, i.e., the common edge $e$ is shared by all of the triangles in the original cycle. 
\end{proof}

Let $G$ be a TLG with derived graph $D_G$. For a node $v$ in $G$, we denote by $D_G(v)$ the subgraph of $D_G$ induced by the triangles that contain $v$. See Fig.~\ref{fig:tlgconst2} for illustration.

\begin{figure}[h]
    \centering
    \includegraphics{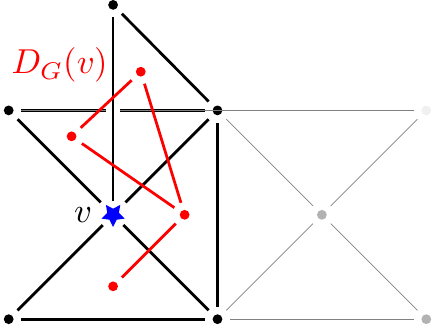}
    \caption{We depict the triangles that contain the starred node $v$ and the corresponding subgraph they induce in $D_G$, denoted by $D_G(v)$. }
\label{fig:tlgconst2}
\end{figure}

\begin{Proposition}\label{prop:dgconnected}
Let $G$ be an arbitrary TLG and $v$ be a node of $G$. Then, $D_G(v)$ is a connected subgraph of $D_G$.
\end{Proposition}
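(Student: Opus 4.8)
The plan is to use the Restricted Henneberg Construction (RHC) from Theorem~\ref{th:equihennebergrtl} and proceed by induction on the number of nodes of $G$, but with a twist: rather than fixing an arbitrary RHC, I would appeal to Proposition~\ref{prop:hennebergfromany} to arrange the RHC so that the node $v$ in question is handled conveniently. The base case $n=3$ is immediate since $D_G(v)$ is then a single node. For the inductive step, let $G$ be a TLG on $n$ nodes and $v$ a fixed node. I want to peel off a degree-two node $v_n$ from the end of an RHC producing $G$, obtaining a TLG $G'$ on $n-1$ nodes to which the induction hypothesis applies.

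First I would split into two cases according to whether $v = v_n$ or $v \neq v_n$. If $v \neq v_n$: the triangles of $G$ containing $v$ are exactly the triangles of $G'$ containing $v$, \emph{plus} possibly the new triangle $\Delta_{n-2} = \{v_l, v_m, v_n\}$ in the case $v \in \{v_l, v_m\}$. When $v \notin \{v_l,v_m\}$ we have $D_G(v) = D_{G'}(v)$ and we are done by induction. When $v \in \{v_l, v_m\}$, say $v = v_l$: the edge $e = (v_l, v_m)$ belongs to $G'$, so by the triangulated-and-rigid structure there is at least one triangle of $G'$ containing both $v_l$ and $v_m$ (namely, the triangle that $e$ closed off when it was part of an RHC step, or any triangle $\Delta_j = \{v_l, v_m, v_k\}$ with $k \le n-1$; such a triangle exists because $e$ lies in at least one triangle of $G'$ — this is where I need to be careful, since $e$ could conceivably be a newly-added edge with no triangle yet, but in an RHC every edge is eventually in a triangle of the final graph $G'$ once $G'$ itself is the full TLG... actually $e$ being an edge of the TLG $G'$ guarantees $e$ lies in some triangle of $G'$ by triangulatedness combined with the fact that TLGs have no "bridge" edges — I would verify this small claim separately, perhaps via the RHC). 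That triangle $\Delta_j$ contains $v_l = v$, so $\Delta_j \in D_{G'}(v)$, and $\Delta_{n-2}$ is adjacent to $\Delta_j$ in $D_G$ (they share edge $e$); since $D_{G'}(v)$ is connected by induction and $\Delta_{n-2}$ attaches to it, $D_G(v)$ is connected.

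The remaining case is $v = v_n$. Here I would instead invoke Proposition~\ref{prop:hennebergfromany} (or re-run the argument) to note that $v_n$ has degree $2$, so it lies in exactly those triangles of $G$ that contain the edges incident to $v_n$. The triangles containing $v_n$ all contain one of its two neighbors; I would argue that any two such triangles either share an edge through $v_n$ or are connected through a short chain. Concretely: every triangle containing $v_n$ has the form $\{v_n, a, b\}$ where $a, b$ are neighbors of $v_n$; since $v_n$ has degree $2$ with neighbors $v_l, v_m$, in fact every triangle containing $v_n$ is of the form $\{v_n, v_l, v_m\}$ — but that is a single triangle! So when $v = v_n$, $D_G(v)$ is a single node and is trivially connected. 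This actually makes the $v = v_n$ case the easiest.

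\textbf{Main obstacle.} The delicate point is the claim, used in the case $v = v_l$, that the edge $e = (v_l, v_m)$ of $G'$ lies in some triangle of $G'$ (so that $D_{G'}(v)$ is nonempty and $\Delta_{n-2}$ has something to attach to). For a general RHC, $e$ might have been the most recently added pair of edges' base, but since $G'$ is itself a complete TLG, I expect every edge of $G'$ to lie in at least one triangle — I would prove this as a quick lemma by induction on the RHC for $G'$: the initial triangle's edges lie in a triangle, and each RHC step adds two edges $(v_l, v_{k+1}), (v_m, v_{k+1})$ that, together with the chosen edge $(v_l, v_m)$, form a new triangle, so the two new edges immediately lie in a triangle while the old edges keep theirs. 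Once this lemma is in hand, the rest of the argument is routine bookkeeping about which triangles survive and how the new triangle attaches.
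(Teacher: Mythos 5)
Your proof is correct and follows essentially the same route as the paper's: an induction along the Restricted Henneberg Construction whose key step is that the newly created triangle containing $v$ attaches, via the selected base edge $(v_l,v_m)$, to a pre-existing triangle containing $v$. The paper inducts on the number of triangles containing $v$ rather than on $|V|$, and it relies on the same small fact you flag as the ``main obstacle'' (every edge of a TLG already lies in some triangle) just as implicitly, so the quick lemma you sketch is exactly what is needed and your argument is complete.
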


\begin{proof}
We proceed by induction on the number of triangles in $G$ that contain $v$. The base case is such that $v$ belongs to exactly one triangle, say $\Delta_{1}$, in $G$. 
The subgraph of $D_G$ induced by $\Delta_{1}$ is a single node and thus connected. This proves the base case.

For the inductive step, we assume that the statement holds for $(k-1)$ and prove it for $k$. Choose an RHC that builds $G$. Let $\ell$ (resp. $m$) be the step such that the RHC stopped right after step $\ell$ (resp. step $m$) yields a subgraph $G_{\ell} \subset G$ with exactly $(k - 1)$ triangles containing $v$ (resp. a subgraph $G_m \subset G$ with $k$ triangles containing $v$). Since $G_\ell$ is a TLG, by the induction hypothesis, $D_{G_\ell}(v)$ is a connected graph on $(k - 1)$ nodes. Label these nodes as $\Delta_{j_1},\ldots, \Delta_{j_{k-1}}$.   
At step $m$, the RHC chooses an existing edge  $(v, v')\in G_{m-1}$ and adds a node to form a new triangle $\Delta_{j_k}$ that contains $v$. 
Without loss of generality, we assume that $\Delta_{j_1}$ is another triangle that contains the edge $(v,v')$. As a consequence,  $(\Delta_{j_1},\Delta_{j_k})$ is an edge in $D_{G_m}$ that connects $\Delta_{j_k}$ with $D_{G_\ell}(v)$. 
In other words, the subgraph $D_{G_m}(v)$ is connected. Finally, observe that  $D_{G_m}(v)$ and $D_G(v)$ have the same node set by assumption. Since the RHC does not remove existing nodes or edges out of $G$ (and, hence, $D_G$ as well) along the construction process, $D_{G}(v)$ is  connected as well.   
\end{proof}

We now introduce the notion of bottleneck nodes, see Fig.~\ref{fig:squirrel} for an illustration.  

\begin{Definition}\label{def:bottlen}
Let $D$ be an undirected graph, $\alpha$ be a node of $D$, and $D'$ be a subgraph of $D$. A node $\alpha^*\in D'$ is a {\em bottleneck in $D'$ for $\alpha$} if every walk from any node in $D'$ to $\alpha$ contains $\alpha^*$. \end{Definition}

If $\alpha \in D'$, then clearly $\alpha$ is its own bottleneck in $D'$, i.e., $\alpha^*=\alpha$. In most of the time, we are interested in the case where $\alpha\notin D'$. We establish below some relevant properties for bottlenecks. We start with the following one:   

\begin{Lemma}\label{lem:uniquebottlen}
If a bottleneck $\alpha^*\in D'$ for $\alpha$ exists, then it is unique. 
\end{Lemma}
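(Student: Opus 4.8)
The plan is to argue by contradiction. Suppose $\alpha^*_1$ and $\alpha^*_2$ are two distinct bottlenecks in $D'$ for $\alpha$. The idea is to produce a single walk, starting in $D'$ and ending at $\alpha$, that avoids at least one of them, contradicting the bottleneck property.

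First I would observe that, since $\alpha^*_1 \in D'$ and $\alpha^*_1$ is a bottleneck for $\alpha$, there is nothing to rule out on the side of walks \emph{starting} at $\alpha^*_1$; rather the tension comes from walks passing through the two candidates in different orders. Concretely, consider any walk $\gamma$ from $\alpha^*_2$ to $\alpha$. Since $\alpha^*_2 \in D'$, this walk must pass through $\alpha^*_1$ (because $\alpha^*_1$ is a bottleneck in $D'$ for $\alpha$). Write $\gamma = \gamma_1 \vee \gamma_2$ where $\gamma_1$ is the portion from $\alpha^*_2$ up to the first visit of $\alpha^*_1$, and $\gamma_2$ is the remaining portion from $\alpha^*_1$ to $\alpha$. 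The key point is that $\gamma_2$ is itself a walk from a node of $D'$ (namely $\alpha^*_1$) to $\alpha$, yet it need not contain $\alpha^*_2$ — and indeed I would argue it can be chosen so that it does not. Symmetrically, taking a walk from $\alpha^*_1$ to $\alpha$ and splitting at the first visit of $\alpha^*_2$, the tail is a walk from $\alpha^*_2 \in D'$ to $\alpha$ not containing $\alpha^*_1$. The cleanest way to force the contradiction: take a \emph{shortest} walk $\gamma$ from $\alpha^*_2$ to $\alpha$ in $D$. It must contain $\alpha^*_1$; the sub-walk of $\gamma$ from $\alpha^*_1$ to $\alpha$ is then a shortest walk from $\alpha^*_1$ to $\alpha$, and by minimality it cannot revisit $\alpha^*_2$ (a shortest walk is a path, so no node repeats, and $\alpha^*_2$ already appeared at the start of $\gamma$). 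This sub-walk starts at $\alpha^*_1 \in D'$ and reaches $\alpha$ without passing through $\alpha^*_2$, contradicting that $\alpha^*_2$ is a bottleneck in $D'$ for $\alpha$.

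Thus I would structure the proof as: (i) recall that a shortest walk between two nodes is a path (no repeated nodes); (ii) fix a shortest walk $\gamma$ from $\alpha^*_2$ to $\alpha$, which exists and which must contain $\alpha^*_1$ by the bottleneck hypothesis on $\alpha^*_1$; (iii) extract the tail $\gamma'$ of $\gamma$ starting at $\alpha^*_1$; (iv) note $\gamma'$ starts in $D'$, ends at $\alpha$, and does not visit $\alpha^*_2$ since $\alpha^*_2$ already occurs strictly earlier in the path $\gamma$ and nodes do not repeat; (v) conclude this contradicts $\alpha^*_2$ being a bottleneck in $D'$ for $\alpha$, hence $\alpha^*_1 = \alpha^*_2$. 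One should also note the degenerate case where $\alpha \in D'$: then $\alpha$ is trivially its own bottleneck, and uniqueness is immediate since any bottleneck must lie on the trivial walk consisting of $\alpha$ alone. The bottleneck hypothesis implicitly requires $D'$ to be connected to $\alpha$ (otherwise "every walk" is vacuous and the statement could fail); I would either assume this or note that in our application $D_G$ is connected so the issue does not arise.

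The main obstacle, and the only subtle point, is the handling of repeated nodes and the direction of the argument — making sure the extracted tail walk genuinely starts at a node of $D'$ and genuinely avoids the other bottleneck. Passing to a shortest walk (equivalently a shortest path, since $D$ is simple) resolves this cleanly, so I expect the proof to be quite short; the real care is just in stating the edge case $\alpha \in D'$ and in being explicit that "walk" in Definition~\ref{def:bottlen} ranges over walks in the ambient graph $D$, not within $D'$.
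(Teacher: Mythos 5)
Your proof is correct, and it closes the contradiction by a different mechanism than the paper does. Both arguments start identically: assume two distinct bottlenecks $\alpha^*_1,\alpha^*_2$, take a walk from one of them to $\alpha$, and use the bottleneck property of the other to locate it on that walk. The paper then iterates this back-and-forth: the tail from $\alpha^*_2$ must again contain $\alpha^*_1$, whose tail must again contain $\alpha^*_2$, and so on, producing an infinite strictly increasing sequence of indices inside a finite walk --- an infinite-descent contradiction against the walk's length. You instead impose minimality up front: a shortest walk from $\alpha^*_2$ to $\alpha$ is a path, so $\alpha^*_2$ occurs only at its first position, and the tail starting at the (necessarily later) first occurrence of $\alpha^*_1$ is a walk from a node of $D'$ to $\alpha$ avoiding $\alpha^*_2$, directly violating the definition of bottleneck. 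Your version terminates in one step and pinpoints exactly which clause of the definition fails, at the small cost of invoking the standard fact that shortest walks are paths; the paper's version avoids that fact but needs the slightly more delicate bookkeeping of the alternating index sequence. Your attention to the two boundary issues --- the case $\alpha\in D'$, and the implicit assumption that at least one walk from $D'$ to $\alpha$ exists (without which every node of $D'$ is vacuously a bottleneck and uniqueness genuinely fails) --- is apt; the paper's proof makes the same existence assumption silently, and it is harmless in the application since $D_G$ is connected.
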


\begin{proof}
The proof is carried out by contradiction. Suppose that there exist two distinct bottlenecks $\alpha^*_1$ and $\alpha^*_2$ in $D'$ for $\alpha$. Let $\gamma = \gamma_1\ldots \gamma_p$ be an arbitrary finite walk with starting node $\gamma_1 = \alpha^*_1$ and $\gamma_p = \alpha$. Since $\alpha^*_2$ is a bottleneck distinct from $\alpha_1^*$, there exists $k_1>1$ such that  $\gamma_{k_1}= \alpha^*_2$.
But, then, $\gamma' := \gamma_{k_1}\cdots \gamma_p$ is a walk from $\alpha^*_2$ to $\alpha$. Note that $|\gamma'| < p$. Similarly, since $\alpha^*_1$ is a bottleneck, there exist another integer $k_2$, with $k_2 > k_1$, such that $\gamma_{k_2} = \alpha^*_1$. Define $\gamma'':=\gamma_{k_2}\cdots \gamma_p$, which is a walk from $\alpha^*_1$ to $\alpha$. By repeatedly applying the above arguments, we obtain an infinite integer sequence $k_1 < k_2 < k_3< \cdots$, such that $\gamma_{k_{2i + 1}} = \alpha^*_{2}$ and $\gamma_{k_{2i}} = \alpha^*_1$. However, the original walk $\gamma$ has finite length, which is a contradiction. We thus have to conclude that $\alpha_1^*=\alpha_2^*$.
\end{proof}

\begin{figure}
    \centering
    \includegraphics{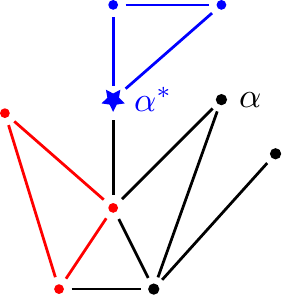}
    
    \caption{Let $\alpha$ be a node in the graph $D$ depicted above. The subgraph of $D$ depicted in blue has a bottleneck for $\alpha$, denoted by $\alpha^*$. The subgraph depicted in red does not have a bottleneck node for $\alpha$.}
    \label{fig:squirrel}
\end{figure}

It should be clear that given the subgraph $D'$ and the node $\alpha$,  if a bottleneck exists, then it is unique. 
The next proposition shows the existence of bottleneck nodes for {\it any} subgraph $D_G(v)$ of $D_G$ and any node outside $D_G(v)$: 

\begin{Proposition}\label{prop:uniqueentry}
Let $\Delta_0$ be a triangle and $v$ be a node in $G$. Let $D_G(v)$ be the subgraph of $D_G$ induced by triangles that contain $v$ in $G$. Then, there exists a bottleneck $\Delta^*\in D_G(v)$ for $\Delta_0$. 
\end{Proposition}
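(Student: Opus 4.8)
The plan is to produce the bottleneck \emph{explicitly}, as the node of $D_G(v)$ lying closest to $\Delta_0$, and then to use the rigidity of cycles in $D_G$ established in Proposition~\ref{prop:cycleshareedge} to forbid any detour around it. If $v\in\Delta_0$ there is nothing to do: $\Delta_0\in D_G(v)$ and $\Delta^*:=\Delta_0$ is a bottleneck since every walk ending at $\Delta_0$ contains $\Delta_0$. So assume $v\notin\Delta_0$, i.e.\ $\Delta_0\notin D_G(v)$. Since $D_G$ is connected (Proposition~\ref{prop:tl}) and $D_G(v)$ is nonempty and connected (Proposition~\ref{prop:dgconnected}), there is at least one walk from $D_G(v)$ to $\Delta_0$; take one, $\pi$, of minimal length, and let $\Delta^*$ be its first node. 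By minimality $\pi$ is a path and $\Delta^*$ is its only node in $D_G(v)$. I claim $\Delta^*$ is the (by Lemma~\ref{lem:uniquebottlen}, unique) bottleneck in $D_G(v)$ for $\Delta_0$.

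Suppose not: there is a walk $\beta$ from some $\Delta'\in D_G(v)$ to $\Delta_0$ that avoids $\Delta^*$. Let $u$ be the last node of $\beta$ lying in $D_G(v)$ (it exists, and $u\ne\Delta^*$), and reduce the portion of $\beta$ from $u$ onward to a path $\sigma$ from $u$ to $\Delta_0$; then $\sigma$ avoids $\Delta^*$ and its only node in $D_G(v)$ is $u$. Also pick a path $\tau$ from $\Delta^*$ to $u$ lying inside $D_G(v)$. The goal is to glue $\pi$, $\sigma$ and $\tau$ into a single cycle. Let $w$ be the first node of $\pi$ (read starting from $\Delta^*$) that also appears on $\sigma$; it exists since $\Delta_0$ lies on both. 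Let $\pi'$ be the sub-path of $\pi$ from $\Delta^*$ to $w$ and $\sigma'$ the sub-path of $\sigma$ from $u$ to $w$. By the choice of $w$, $\pi'$ and $\sigma'$ meet only at $w$; moreover all nodes of $\pi'$ except $\Delta^*$ and all nodes of $\sigma'$ except $u$ lie outside $D_G(v)$, so each of $\pi',\sigma'$ meets $\tau$ only at its shared endpoint; and $w\notin D_G(v)$ (otherwise $w$, lying on $\pi$, would equal $\Delta^*$, impossible since $w$ lies on $\sigma$). Hence $C:=\pi'\cdot(\sigma')^{-1}\cdot\tau^{-1}$ is a genuine cycle in $D_G$, with pairwise distinct corners $\Delta^*,w,u$ and three nontrivial arcs, so $C$ has length at least $3$.

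Finally, apply Proposition~\ref{prop:cycleshareedge} to $C$ (the base case in its proof covers $|C|=3$, the statement covers $|C|>3$): all triangles of $G$ occurring as nodes of $C$ share one common edge $e=(x,y)$ of $G$; in particular $e\subseteq\Delta^*$, $e\subseteq u$, $e\subseteq w$. Now $\Delta^*,u\in D_G(v)$ and $w\notin D_G(v)$ give $v\in\Delta^*$, $v\in u$, $v\notin w$. If $v$ were an endpoint of $e$ it would also be a vertex of $w$; hence $v\notin\{x,y\}$, so $v$ is the third vertex of both $\Delta^*=\{x,y,v\}$ and $u=\{x,y,v\}$, forcing $\Delta^*=u$ --- contradicting $\Delta^*\ne u$. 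This contradiction proves the claim, and hence the proposition.

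I expect the bookkeeping in the middle step to be the delicate part: one has to verify that the three arcs $\pi'$, $\sigma'$, $\tau$ are pairwise internally disjoint so that $C$ is an honest cycle rather than merely a closed walk, and that $C$ genuinely contains a node $w$ outside $D_G(v)$ --- this is precisely the leverage that makes Proposition~\ref{prop:cycleshareedge} applicable. A secondary point is that Proposition~\ref{prop:cycleshareedge} as stated concerns cycles of length strictly greater than $3$, so the length-$3$ case of $C$ must be settled by the Laman/$K_4$-counting argument from that proposition's base case. (An alternative, more case-heavy proof runs by induction on $|V(G)|$ via the restricted Henneberg construction: remove the last node, observe that the triangles adjacent to the deleted triangle form a clique in $D_G$ so any walk can be rerouted around the deleted node, and transport the bottleneck from $D_{G'}$ to $D_G$.)
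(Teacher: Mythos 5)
Your proof is correct and follows essentially the same route as the paper's: both arguments build a cycle in $D_G$ out of two escape paths from $D_G(v)$ to $\Delta_0$ joined by a path inside $D_G(v)$, invoke Proposition~\ref{prop:cycleshareedge} to obtain a common edge shared by all triangles on that cycle, and derive a contradiction from the presence of a cycle node outside $D_G(v)$. The only (harmless) differences are that you exhibit the bottleneck explicitly as the node of $D_G(v)$ nearest to $\Delta_0$ where the paper argues purely by contradiction, and your bookkeeping on the internal disjointness of the three arcs and on the length-$3$ case is, if anything, more careful than the paper's.
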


\begin{proof}

If $D_G(v)$ contains one node $\Delta_0$ or if $v \in \Delta_0$, then clearly $\Delta_0$ is the bottleneck. Hence we assume it contains at least two nodes and, moreover, $v\notin \Delta_0$, so $\Delta_0\notin D_G(v)$.  

The remainder of the proof is carried out by contradiction. 
Suppose that there is no bottleneck. By  Prop.~\ref{prop:tl}, one can find two paths, $\gamma$ and $\gamma'$,  that start with nodes in $D_G(v)$ and end at $\Delta_0$. Moreover, by our assumption, $\gamma$ and $\gamma'$ can be chosen with the property that they exit the subgraph $D_G(v)$ through two distinct nodes.

More precisely, let $\gamma_p$ (resp. $\gamma'_p$) the $p$th node in $\gamma$ (resp. $\gamma'$). A node $\gamma_p$ is called the exiting node of $\gamma$ if $\gamma_p\in D_G(v)$ and $\gamma_q\notin D_G(v)$ for any $q > p$. 
Similarly, we let $\gamma'_{p'}$ be the exiting node of $\gamma'$. Then, by the hypothesis, we can find $\gamma$ and $\gamma'$ such that the two exiting nodes $\gamma_p$ and $\gamma'_{p'}$ are distinct. 
For convenience, we assume, by truncating the two paths (if necessary), that the first nodes $\gamma_1$ and $\gamma'_1$ of the two paths are the existing nodes.

We will now construct a cycle in $D_G$ that  contains nodes $\gamma_1$, $\gamma'_1$, and at least one node  not in $D_G(v)$. To this end, since $D_G(v)$ is connected by Prop.~\ref{prop:dgconnected},  there exists a path $\omega$ in $D_G(v)$ from $\gamma_1$ to $\gamma'_1$. 

Next, we let $\Delta$ be the first node that belongs to both $\gamma$ and $\gamma'$. Since $\gamma$ and $\gamma'$ have the same ending node $\Delta_0$, the node $\Delta$ always exists (and it could be $\Delta_0$). 
By concatenating the subpath $\gamma_1\cdots\Delta$ of $\gamma$ with the subpath $\Delta\cdots \gamma'_1$ of $(\gamma')^{-1}$, we obtain a new path $\omega'$ joining $\gamma_1$ to $\gamma_1'$. 

Note, in particular, that only the starting and the ending nodes of $\omega'$ belong to $D_G(v)$. 
By concatenating $\omega$ with $\omega'^{-1}$, we obtain the desired cycle. See Fig.~\ref{fig:<3<3<3} for illustration. 

\begin{figure}
    \centering
    \includegraphics{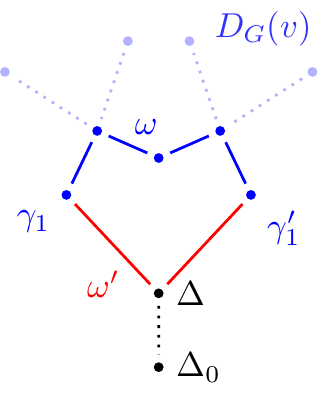}

    \caption{This figure shows that if the bottleneck does not exist, then there would be a cycle $C$ formed by the conjunction of two paths $\omega$ and $\omega'^{-1}$ as shown in the proof. The nodes in blue are nodes of $D_G(v)$. The node $\Delta$ is outside $D_G(v)$ but belongs to $C$, which leads to a contradiction.}
    \label{fig:<3<3<3}
\end{figure}

Denote the cycle by $C$. By Prop.~\ref{prop:cycleshareedge}, the triangles of $G$ that correspond to the nodes of $C$ share a common edge, which we denote by $e$. Because both $\gamma_1$ and $\gamma'_1$ belong to $D_G(v)$ and because $\gamma_1\neq \gamma'_1$, the edge $e$ must contain the node $v$. Since the triangle $\Delta$ is also a node  of $C$, it contains the edge $e$ and, hence, the node $v$. On the other hand, $\Delta$ does not belong to $D_G(v)$, which is a contradiction.   
\end{proof}

\section{Main Results}\label{sec:mainresults}
In the section, we state three main results concerning products of local stochastic matrices (which will be introduced below), namely Theorems~\ref{th:main1},~\ref{th:characwi}, and~\ref{th:surj}. 
Relying on properties of TLGs and their derived graphs established in the previous section, we (1) characterize the limits of these products, (2) make connections between these limits and the so-called absolute probability vectors~\cite{kolmogoroff1936theorie}, 
and (3) show that for any given target limit, one can find a set of local stochastic matrices so that their infinite products converge to the target one.  

In the sequel, we will view $D_G$ as a {\it directed graph} by replacing an undirected edge $(v_i, v_j)$ with two directed ones, namely $v_iv_j$ and $v_jv_i$. 
The purpose of doing so is to emphasize the direction in which an edge  of $D_G$ is travelled.

\subsection{Local stochastic matrices and their infinite products}\label{ssec:lsm}

\paragraph{Local stochastic matrices:} 
We now show how to attach a set of stochastic matrices $A_i$ to a given TLG, and how the same graph can be used to generate an infinite family of products of $A_i$ with a known limit distribution. Throughout this section, $G$ is a TLG on $n$ nodes.

By Proposition~\ref{prop:tl}, there are $(n - 2)$ triangles in $G$, we denote them by $\Delta_1,\ldots, \Delta_{n-2}$. To each $\Delta_i = \{v_j, v_k, v_l\}$, with $j < k < l$, we assign weights to its three nodes.  We denote these weights as $a_{i,j}$, $a_{i,k}$, and $a_{i,l}$ respectively.
We emphasize that a given node does not necessarily have a unique weight assigned, but has one weight assigned per triangle to which it belongs.
On occasion, we will use $a_{\Delta_i,v_j}$, instead of $a_{i,j}$, to denote the weight assigned to node $v_j$ in triangle $\Delta_i$. 

We call $a_i = (a_{i,j}, a_{i,k}, a_{i,l})$ the {\bf local weight vector} associated with $\Delta_i$. 
We next define for each $\Delta_i$ a stochastic matrix as follows:
\begin{equation}\label{eq:defAi}
A_i:=\sum_{v_j, v_k \in \Delta_i} a_{i,k} e_j  e_k^\top + \sum_{v_j \notin \Delta_i} e_je_j^\top.
\end{equation}
The structure of the $A_i$ is easy to state in words: The principal submatrix of $A_i$ corresponding to $\Delta_i$ is a $3$-by-$3$ rank-one stochastic matrix while the remainder of the matrix is simply the identity matrix. We illustrate this below a simple example:

\begin{Example}[Local stochastic matrices with $n=5$]\label{ex:n5partuno}
Consider a graph on $n=5$ nodes consisting of the three triangles $\Delta_1=\{v_1, v_2, v_3\}$, $\Delta_2=\{v_1, v_2, v_4\}$, and $\Delta_3=\{v_2,v_3,v_5\}$. 
In this case, we have the following local stochastic matrices: 
\begin{equation*}
A_1=\begin{bmatrix} a_{1,1} & a_{1,2} & a_{1,3} & 0& 0 \\a_{1,1} & a_{1,2} & a_{1,3} & 0& 0\\a_{1,1} & a_{1,2} & a_{1,3} & 0& 0\\0&0&0&1& 0\\0&0&0&0& 1
\end{bmatrix}, \quad
    A_2=\begin{bmatrix} a_{2,1} & a_{2,2} & 0 & a_{2,4} & 0 \\a_{2,1} & a_{2,2} &0  & a_{2,4} & 0\\0 & 0 & 1 & 0 & 0 \\a_{2,1} & a_{2,2} &0  & a_{2,4} & 0 \\ 0 & 0 & 0 & 0 & 1
    \end{bmatrix},
\end{equation*}
and
\begin{equation*}
    A_3=\begin{bmatrix} 1 & 0 & 0 & 0 & 0 \\ 0 & a_{3,2} & a_{3,3} & 0 & a_{3,5} \\  0 & a_{3,2} & a_{3,3} & 0 & a_{3,5} \\0 & 0 & 0 & 1 & 0 \\ 0 & a_{3,2} & a_{3,3} & 0 & a_{3,5}\end{bmatrix}.
\end{equation*}
\end{Example}

For a later purpose, we need a mild assumption on the local stochastic matrices:

\begin{Assumption}\label{ass:weightnonzero}
For each triangle $\Delta_i$ and each {\it nonsimple} edge $(v_j, v_k)$ in $\Delta_i$, $a_{i,j} + a_{i,k} > 0$.
\end{Assumption}

\paragraph{Products of local stochastic matrices:} 
We now describe allowable products of local stochastic matrices. Let $\gamma$ be a walk in $D_G$. We say that the walk $\gamma$ is {\em infinite} if $|\gamma| =\infty$. To any walk $\gamma:=\Delta_{i_1} \Delta_{i_2} \cdots \Delta_{i_k}$ in the derived graph $D_G$, we associate the product of stochastic matrices 
\begin{equation}\label{eq:product}
P_\gamma:= A_{i_k} \cdots A_{i_2} A_{i_1}.
\end{equation}
We will mostly be interested in the case of infinite walks and, in particular, determining the corresponding product $P_\gamma$.  

The problem, which is twofold in nature, is well-known to be difficult. First, one has to guarantee that the infinite products exists (i.e., in the limit  $|\gamma|\to \infty$).  Second, provided that the limit exists, it usually depends on the complete sequence $\gamma$, making its characterization generically intractable. While the first problem has been been the subject of many investigations, as mentioned in the Introduction, the second problem has received much less attention so far. 

Surprisingly, under certain mild assumptions on the infinite walks (which we introduce in Def.~\ref{def:exhaustive}), a complete characterization of $P_\gamma$ can be obtained. We state the results below. 
To proceed, we first introduce the following definition: 

\begin{Definition}[Exhaustive walk]\label{def:exhaustive}
 A finite walk $\gamma$ in $D_G$ is {\bf exhaustive} if it visits every node of $D_G$ at least once. An infinite walk $\gamma$ in $D_G$ is {\bf exhaustive} if it visit every node of $D_G$ infinitely often. 
\end{Definition}

With the above definition, we  now state the first main result, which says that $P_\gamma$ exists if the infinite walk $\gamma$ is exhaustive and, moreover, there exists a {\it finite} set of rank one matrices to which the limit $P_\gamma$ can belong.  

\begin{Theorem}\label{th:main1}
Let $G$ be a TLG on $n$ nodes, with triangles $\Delta_1,\ldots, \Delta_{n-2}$.  
Let $\{A_1,\ldots, A_{n-2}\}$ be an arbitrary set of local stochastic matrices that satisfy assumption~\ref{ass:weightnonzero}. 
Then, there exist $(n-2)$ probability vectors $\overline {w}_1,\ldots, \overline {w}_{n-2}$ such that for every infinite exhaustive walk $\gamma$ with starting node $\Delta_i$, $1\leq i \leq n-2$, $$P_\gamma= \bfo \overline{w}^\top_i.$$
\end{Theorem}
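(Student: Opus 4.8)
The plan is to show that the product $P_\gamma$ converges and depends only on the starting node $\Delta_i$ of the walk, by exploiting the structure of local stochastic matrices together with the bottleneck property (Proposition~\ref{prop:uniqueentry}). The key observation about a single local matrix $A_j$ is that, when it is applied, the three entries of the state vector indexed by the nodes of $\Delta_j$ are replaced by a common value --- namely the convex combination $a_{j}\cdot(\text{those three coordinates})$ --- while all other coordinates are untouched. So after applying $A_j$, the three coordinates in $\Delta_j$ agree. Thinking of the product $P_\gamma x$ for an arbitrary input $x$, I would track a ``contraction'' quantity: since every matrix in the product is stochastic, $\max_k (P_{\gamma_{\le t}} x)_k$ is nonincreasing and $\min_k (P_{\gamma_{\le t}} x)_k$ is nondecreasing in $t$, where $\gamma_{\le t}$ denotes the length-$t$ prefix. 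The goal is to prove this gap shrinks to $0$, which gives convergence to a rank-one matrix $\mathbf{1} w^\top$; the more delicate part is identifying $w$ and showing it depends only on $\Delta_i$.

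First I would establish convergence (the gap $\to 0$). Fix a coordinate $v$ of $G$. Consider the quantity $m_t = \min\{(P_{\gamma_{\le t}} e_j^\top \text{-row picture})\}$; more cleanly, work with the row vectors: $P_\gamma^\top = A_{i_1}^\top \cdots A_{i_k}^\top$ acting on the right, or equivalently track how the rows of the partial product evolve. The cleanest route: let $y^{(t)}$ be any fixed row of $P_{\gamma_{\le t}}$ viewed as a function of which row --- actually, since we want $P_\gamma = \mathbf 1 w^\top$, it suffices to show all rows of $P_{\gamma_{\le t}}$ converge to a common limit. Equivalently, for arbitrary $x \in \R^n$, show $P_{\gamma_{\le t}} x$ converges to a vector with all entries equal. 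Let $M_t = \max_k (P_{\gamma_{\le t}} x)_k$ and $\mu_t = \min_k (P_{\gamma_{\le t}} x)_k$. Because each $A_j$ is stochastic, $(M_t)$ is nonincreasing and $(\mu_t)$ nondecreasing, so both converge; I must show to the same limit. The key lemma is: whenever the walk traverses an edge $(\Delta_j, \Delta_{j'})$ sharing edge $(v_a, v_b)$, and then later applies $A_{j'}$, the coordinate at $v_a$ and $v_b$ have been ``mixed'' with the third node of $\Delta_{j'}$. Using Assumption~\ref{ass:weightnonzero} on nonsimple edges, the amount of mixing is bounded below by a positive constant $\delta$ depending only on the finitely many weights. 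Since $\gamma$ is exhaustive, every triangle is visited infinitely often, and since $D_G$ is connected (Proposition~\ref{prop:tl}), between consecutive full ``sweeps'' the extremal coordinates get pulled toward the interior by a factor bounded away from $1$; hence $M_t - \mu_t \to 0$. This yields $P_\gamma = \mathbf 1 \overline w_\gamma^\top$ for some probability vector $\overline w_\gamma$.

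Next I would show $\overline w_\gamma$ depends only on $\Delta_i = \gamma_1$. Here is where the bottleneck nodes enter. The idea: express $w$ in terms of the first matrix $A_{i}$ and a ``tail'' that is independent of the precise continuation. Consider two exhaustive walks $\gamma, \gamma'$ with the same starting node $\Delta_i$. I want $P_\gamma = P_{\gamma'}$. Since both limits are rank one, $P_\gamma = \mathbf 1 w^\top$ and $P_{\gamma'} = \mathbf 1 w'^\top$, and $w^\top = e_k^\top P_\gamma$ for any $k$; pick $k$ to be a node $v \in \Delta_i$. Now $e_v^\top A_i = a_i$ supported on $\Delta_i$, i.e., $e_v^\top P_\gamma = a_i^\top \cdot (\text{rest of product restricted appropriately})$. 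The crucial structural claim (to be proved using Proposition~\ref{prop:uniqueentry}): for each node $v' \in \Delta_i$, the $v'$-th coordinate's ultimate contribution to the limit is governed by the bottleneck triangle $\Delta^* \in D_G(v')$ for $\Delta_i$ --- all walks from $\Delta_i$ must "enter" $D_G(v')$ through $\Delta^*$, and once the walk is processing triangles far from $\Delta_i$, the relevant coordinate value has already been homogenized in a way that no longer sees the rest of the walk. Concretely, I would set up a recursion on an RHC of $G$ (Proposition~\ref{prop:hennebergfromany} lets me start the RHC at $\Delta_i$): peeling off the last-added degree-two node reduces $G$ to a smaller TLG $G'$, and one checks that the limit for $G$ restricted to the $n-1$ coordinates of $G'$ equals the limit for $G'$, while the extra coordinate $v_n$ gets a value that is a fixed convex combination (depending only on $a_{n-2}$ and the limit values at $v_l, v_m$) --- and those are already determined by induction. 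The base case $n=3$ is immediate: for a single triangle, $A_1$ is already rank one, so $P_\gamma = A_1$ regardless of $\gamma$.

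\textbf{Main obstacle.} I expect the hard part to be the second half --- rigorously showing independence from the tail of the walk. Convergence to rank one is a fairly standard Hajnal-type / coefficient-of-ergodicity argument once the uniform mixing constant $\delta$ from Assumption~\ref{ass:weightnonzero} is in hand. But pinning down that the limit sees only $\Delta_i$ requires carefully using the bottleneck structure: one must argue that the "information" flowing from a distant part of $G$ back to the starting triangle is forced to pass through a unique bottleneck, and across that bottleneck the value has already been collapsed to something independent of how exactly the walk wandered beyond it. Making this precise --- likely via the RHC recursion, peeling leaves and invoking Proposition~\ref{prop:dgconnected} and Proposition~\ref{prop:uniqueentry} at each step to show the peeled coordinate's limiting value is a deterministic function of the smaller problem's limit --- is the technical heart of the argument, and getting the bookkeeping right (especially handling simple vs. nonsimple edges, where Assumption~\ref{ass:weightnonzero} is or isn't needed) is where I would spend most of the effort.
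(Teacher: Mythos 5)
Your convergence argument is essentially the paper's: the paper also runs a Hajnal-type contraction, using the semi-norm $\|\cdot\|_S$ together with the fact (Prop.~\ref{prop:positivecolumn}) that the product over any finite exhaustive walk has a column uniformly bounded below by $\underline{a}^{\,n-2}$, where $\underline{a}$ is the smallest nonzero local weight. What you leave implicit is the inductive proof of that uniform bound, which is exactly where the adjacency (walk) structure matters: when the walk first covers a new node of $G$, the new triangle shares an edge with the already-covered region, and Assumption~\ref{ass:weightnonzero} guarantees a positive weight on at least one endpoint of that shared edge, so positivity of a column propagates with a loss of at most a factor $\underline{a}$ per new node. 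This is recoverable from your sketch, so I would not call it a gap.

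The genuine gap is in the second half. Your plan is an induction on the RHC: peel off the last degree-two node $v_{n+1}$ (in triangle $\Delta_{n-1}=\{v_l,v_m,v_{n+1}\}$) and assert that ``the limit for $G$ restricted to the $n-1$ coordinates of $G'$ equals the limit for $G'$.'' That assertion is the entire difficulty and is not proved; it is also not a routine restriction, because the infinite product for $G$ contains infinitely many factors $A_{n-1}$, each of which acts nontrivially on the $G'$-coordinates $v_l$ and $v_m$ (it mixes in the value at $v_{n+1}$), so the product restricted to $G'$'s coordinates is \emph{not} the product one would form for $G'$ from the induced walk. Likewise, the appeal to bottlenecks (``the value has already been homogenized\ldots independent of how the walk wandered beyond it'') restates the desired conclusion rather than proving it. The paper closes this by a different mechanism: it explicitly constructs the unnormalized APVs $w_1,\dots,w_{n-2}$ of Sec.~\ref{ssec:characprodlim} (this is where the bottleneck nodes and the path-independence $R_\omega=1$ of Prop.~\ref{prop:independenceofpath} are actually used --- to make $w_{i,j}$ well defined), proves the covariance relation $w_j^\top A_i = r_{i,j}\,w_i^\top$ for adjacent triangles (Prop.~\ref{prop:wjAiwi}), deduces $w_i^\top P_\gamma(s_k:0)=w_i^\top$ along the subsequence of returns to $\Delta_i$ (Cor.~\ref{cor:closedPratio1}), and passes to the limit to get $\hat w\propto w_i$. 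If you insist on the peeling recursion, you would need to prove the restriction claim as a standalone lemma --- most naturally by exhibiting a common left fixed vector of the partial products between returns to $\Delta_i$ --- at which point you are effectively reconstructing the paper's $w_i$.
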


We introduce below a few corollaries of Theorem~\ref{th:main1}.

\paragraph{Randomized scheduling} 

An infinite exhaustive walk $\gamma$ in $D_G$ can be obtained easily by periodic extension of a finite exhaustive walk whose starting and ending nodes are adjacent.  It can also be obtained via random walks as we describe below. 
Given a node $\Delta_i \in D_G$, we denote by $N(\Delta_i)$ the set of neighbors of $\Delta_i$ (the in-neighbors and the out-neighbors of $\Delta_i$ are the same). We call $\gamma$ a {\it simple random walk} in $D_G$, if $\gamma$ is an infinite walk and the transition probability $\mathbb{P}(\gamma_{t+1}=\Delta_j\mid \gamma_t = \Delta_i)$ is given by 
$$\mathbb{P}(\gamma_{t+1}=\Delta_j \mid \gamma_t=\Delta_i ) = \left\lbrace \begin{array}{ll} \frac{1}{|N(\Delta_i)|} & \mbox{ if }\Delta_j \in N(\Delta_i), \\
0 & \mbox{ otherwise.}
\end{array}\right.$$
Because $D_G$ is connected, it is well known that a simple random walk visits every node of $D_G$ infinitely often (and, hence, it is infinite exhaustive) with probability 1. The following fact is then an immediate consequence of Theorem~\ref{th:main1}:

\begin{Corollary}\label{cor:randomwalk}
Let $\gamma$ be a simple random walk with starting node $\Delta_i$. 
Then, $P_\gamma = \bfo \overline w^\top_i$ with probability one.   
\end{Corollary}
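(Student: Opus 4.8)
The plan is to deduce Corollary~\ref{cor:randomwalk} directly from Theorem~\ref{th:main1} together with the classical fact that a simple random walk on a finite connected graph is recurrent, so that it visits every node infinitely often almost surely. First I would recall that $D_G$ is connected by Proposition~\ref{prop:tl} and has finitely many nodes (exactly $n-2$). A simple random walk on a finite connected graph induces an irreducible Markov chain on the node set; an irreducible Markov chain on a finite state space is positive recurrent, hence each state is visited infinitely often with probability one. Taking the intersection over the (finitely many) nodes of $D_G$, the event ``the walk visits every node of $D_G$ infinitely often'' has probability one.

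Next I would observe that on this probability-one event the sample path $\gamma$ is, by Definition~\ref{def:exhaustive}, an infinite exhaustive walk, and by construction its starting node is $\Delta_i$. Theorem~\ref{th:main1} then applies verbatim: the vectors $\overline{w}_1,\ldots,\overline{w}_{n-2}$ it furnishes depend only on $G$ and on the fixed set of local stochastic matrices $\{A_1,\ldots,A_{n-2}\}$ (which we assume satisfy Assumption~\ref{ass:weightnonzero}), not on the particular sample path, so $P_\gamma=\bfo\,\overline{w}_i^\top$ on that event. Since the event has probability one, $P_\gamma=\bfo\,\overline{w}_i^\top$ with probability one, which is exactly the claim.

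The one point that requires a sentence of care is measurability/interpretation: $P_\gamma$ is defined as a limit of finite products $A_{i_k}\cdots A_{i_1}$ along the random sequence, so ``$P_\gamma=\bfo\,\overline{w}_i^\top$'' should be read as ``the limit exists and equals $\bfo\,\overline{w}_i^\top$,'' and both the existence and the value are guaranteed by Theorem~\ref{th:main1} on the exhaustive event. There is no genuine obstacle here — the corollary is essentially a reformulation of Theorem~\ref{th:main1} once one imports the standard recurrence statement for finite-state irreducible Markov chains — so I expect the ``hard part'' to be merely cosmetic: stating the recurrence fact at the right level of generality and noting that the finitely many vectors $\overline{w}_i$ are path-independent so that the conclusion survives the passage to a probability-one event.

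\begin{proof}
By Proposition~\ref{prop:tl}, $D_G$ is a finite connected graph. A simple random walk on $D_G$ is therefore an irreducible, finite-state Markov chain, and such a chain is positive recurrent; hence, for each node $\Delta_j$ of $D_G$, the walk visits $\Delta_j$ infinitely often with probability one. Intersecting these finitely many probability-one events shows that, with probability one, $\gamma$ visits every node of $D_G$ infinitely often, i.e.\ $\gamma$ is an infinite exhaustive walk in the sense of Definition~\ref{def:exhaustive}. Since the starting node of $\gamma$ is $\Delta_i$ and the set $\{A_1,\ldots,A_{n-2}\}$ satisfies Assumption~\ref{ass:weightnonzero}, Theorem~\ref{th:main1} applies on this event and yields $P_\gamma=\bfo\,\overline{w}_i^\top$, where $\overline{w}_i$ is the probability vector provided by that theorem and depends only on $G$ and $\{A_1,\ldots,A_{n-2}\}$, not on the sample path. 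As the underlying event has probability one, we conclude that $P_\gamma=\bfo\,\overline{w}_i^\top$ with probability one.
\end{proof}
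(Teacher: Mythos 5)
Your proof is correct and follows exactly the route the paper takes: the corollary is stated as an immediate consequence of Theorem~\ref{th:main1} once one notes that a simple random walk on the finite connected graph $D_G$ (connectedness from Proposition~\ref{prop:tl}) visits every node infinitely often with probability one, hence is almost surely an infinite exhaustive walk. Your additional remarks on measurability and path-independence of the $\overline{w}_i$ are harmless elaborations of the same argument.
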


\paragraph{Connection to absolute probability vectors.} 
Theorem~\ref{th:main1} has a few notable consequences.   
Let $\gamma$ and $P_\gamma$ be as in the theorem's statement. We use the common notation that for a pair $0 \le  s < t$ of positive integers, the partial product corresponding to the indices is
$$
P_\gamma(t:s) = A_{\gamma_{t}}A_{\gamma_{t-2}} \cdots A_{\gamma_{s+1}}.
$$
With the above notation, we can write, e.g., $P_\gamma(t:s)P_\gamma(s:r) = P_\gamma(t:r)$.  

Kolmogorov introduced in~\cite{kolmogoroff1936theorie} the {\bf absolute probability vectors} associated with a product $P_\gamma$ (see also~\cite{blackwell1945finite,doob1989kolmogorov}):

\begin{Definition}[Absolute probability vectors]\label{def:apv}
A sequence of vectors $\{x_s\}^\infty_{s = 0}$ are {\bf absolute probability vectors} {\em ({\bf APVs})} for $P_\gamma$ if every $x_s$ is a probabiltiy vector and if for every pair $(s, t)$ of integers, with $0 \le s < t$,  
$x^\top_t P_\gamma(t:s) = x^\top_s$.   
\end{Definition}

APVs are tightly related to the existence of the limit  $\lim_{t \to \infty} P_\gamma(t:0)$. 
For example, it is known that the limit exists if and only if 
there is a {\em unique} set of APVs for $P_\gamma$ and, moreover,  
\begin{equation}\label{eq:apvsconvergence}
\lim_{t \to \infty} P_\gamma(t:s) = \bfo x_s^\top,
\end{equation}
for any given $s \ge 0$.  
We refer the reader to the recent work~\cite{touri2012product} for more details on the use of the APVs (note that the author uses ``absolute probability sequence'' instead of APVs).  
As an immediate consequence of Theorem~\ref{th:main1}, we have the following corollary: 
\begin{Corollary}\label{cor:finiteanduniqueAPVs1}
If $\gamma$ is an infinite exhaustive walk,  
then there is a unique sequence of APVs for $P_\gamma$. 
\end{Corollary}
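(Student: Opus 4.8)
The plan is to derive Corollary~\ref{cor:finiteanduniqueAPVs1} as a direct consequence of Theorem~\ref{th:main1} together with the equivalence~\eqref{eq:apvsconvergence} recalled just above the corollary. The key observation is that the convergence of the left product $P_\gamma(t:0)$ to a rank-one matrix $\bfo\overline w_i^\top$ is precisely the hypothesis needed to invoke the ``if and only if'' characterization: the limit $\lim_{t\to\infty}P_\gamma(t:s)$ exists for every $s\ge 0$ if and only if there is a \emph{unique} sequence of APVs for $P_\gamma$. So once we know the limit exists, uniqueness of the APVs follows immediately. The only thing requiring a short argument is that an infinite exhaustive walk $\gamma$ does make \emph{every} tail product $P_\gamma(t:s)$ (not just $P_\gamma(t:0)$) converge.

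First I would note that if $\gamma=\Delta_{i_1}\Delta_{i_2}\cdots$ is an infinite exhaustive walk with starting node $\Delta_{i_1}$, then for any fixed $s\ge 0$ the shifted walk $\gamma^{(s)}:=\Delta_{i_{s+1}}\Delta_{i_{s+2}}\cdots$ is again an infinite walk in $D_G$, and it is still exhaustive: deleting the finite prefix $\Delta_{i_1}\cdots\Delta_{i_s}$ cannot destroy the property that every node is visited infinitely often. Its starting node is $\Delta_{i_{s+1}}=:\Delta_{j}$ for some index $j$ depending on $s$. Applying Theorem~\ref{th:main1} to $\gamma^{(s)}$, we get $P_{\gamma^{(s)}}=\lim_{t\to\infty}P_\gamma(t:s)=\bfo\overline w_j^\top$. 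Hence $\lim_{t\to\infty}P_\gamma(t:s)$ exists (and equals $\bfo\overline w_j^\top$) for every $s\ge 0$.

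Next I would invoke the stated equivalence: since the limit $\lim_{t\to\infty}P_\gamma(t:s)$ exists for all $s\ge 0$, there is a \emph{unique} sequence of APVs $\{x_s\}_{s=0}^\infty$ for $P_\gamma$, and moreover $x_s^\top$ is exactly the left eigenvector appearing in the limit, i.e., $x_s=\overline w_j$ with $j$ the index of $\Delta_{i_{s+1}}$ as above by~\eqref{eq:apvsconvergence}. This completes the proof of the corollary; it is essentially a one-line deduction plus the bookkeeping of shifting the walk. I would probably also remark, as a bonus consistent with the ``finite set'' theme of the paper, that each $x_s$ takes values in the finite set $\{\overline w_1,\ldots,\overline w_{n-2}\}$, foreshadowing Corollary~\ref{cor:finiteanduniqueAPVs} mentioned in the introduction.

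The main (and only mild) obstacle is making sure the cited characterization ``limit exists $\iff$ unique APVs, with the APVs given by the limiting eigenvectors'' is applied correctly — in particular that it applies to the single fixed product $P_\gamma$ and does not secretly require a uniform or two-sided statement. Since the paper cites~\cite{touri2012product} for exactly this fact and states it in the running text, I would simply quote it; there is no genuine difficulty here beyond verifying that ``for every $s$'' in the hypothesis of the characterization is matched by our ``for every $s$'' obtained from the shifted-walk argument. Everything else is immediate from Theorem~\ref{th:main1}.
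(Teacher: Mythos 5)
Your proof is correct and follows essentially the same route as the paper, which presents this corollary as an immediate consequence of Theorem~\ref{th:main1} combined with the cited equivalence ``the limit exists iff there is a unique sequence of APVs.'' Your extra step of applying Theorem~\ref{th:main1} to the shifted walks $\gamma^{(s)}$ is exactly the argument the paper itself uses in the proof of Corollary~\ref{cor:finiteanduniqueAPVs}, so it is a faithful (and slightly more explicit) rendering of the intended reasoning.
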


Furthermore, a complete characterization of the values of the APVs can be obtained using Theorem~\ref{th:main1}: 
\begin{Corollary}\label{cor:finiteanduniqueAPVs}
Let $\gamma$ be an arbitrary infinite exhaustive walk and $\{x_s\}_{s = 0}^\infty$ be the unique sequence of APVs for $P_\gamma$. Let $\overline w_1,\ldots, \overline w_{n-2}$ be as in Theorem~\ref{th:main1}.
Then,  the image of the map $s\mapsto x_s$ is $\{\overline{w}_1,\ldots, \overline{w}_{n-2}\}$.
\end{Corollary}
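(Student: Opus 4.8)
The plan is to combine Theorem~\ref{th:main1} with Corollary~\ref{cor:finiteanduniqueAPVs1} and the defining property $x_t^\top P_\gamma(t:s) = x_s^\top$ of APVs, together with the general fact~\eqref{eq:apvsconvergence} that $\lim_{t\to\infty} P_\gamma(t:s) = \bfo x_s^\top$ whenever the APV sequence is unique. Since $\gamma$ is infinite exhaustive, Corollary~\ref{cor:finiteanduniqueAPVs1} guarantees a unique APV sequence $\{x_s\}$, and the product $\lim_{t\to\infty}P_\gamma(t:0)$ exists by Theorem~\ref{th:main1}. The strategy has two halves: first show $x_s \in \{\overline w_1,\ldots,\overline w_{n-2}\}$ for every $s$, and then show every $\overline w_i$ is attained.

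\textbf{Inclusion $\{x_s\}\subseteq\{\overline w_i\}$.} Fix $s\ge 0$ and let $\Delta_{j}:=\gamma_s$ be the $s$-th node of the walk. Consider the tail walk $\gamma^{(s)}:=\gamma_s\gamma_{s+1}\gamma_{s+2}\cdots$. Since $\gamma$ visits every node of $D_G$ infinitely often, so does its tail $\gamma^{(s)}$; hence $\gamma^{(s)}$ is itself an infinite exhaustive walk, and its starting node is $\Delta_j$. By Theorem~\ref{th:main1}, $P_{\gamma^{(s)}} = \bfo \overline w_j^\top$. On the other hand, $P_{\gamma^{(s)}}$ is precisely $\lim_{t\to\infty} P_\gamma(t:s)$, which by~\eqref{eq:apvsconvergence} equals $\bfo x_s^\top$. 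Comparing the two rank-one matrices (their common rows are probability vectors, hence equal), we get $x_s = \overline w_j = \overline w_{\gamma_s}$. In particular $x_s$ lies in the finite set $\{\overline w_1,\ldots,\overline w_{n-2}\}$, and moreover we have identified $x_s$ concretely: it is the vector $\overline w_i$ indexed by the $s$-th node of the walk.

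\textbf{Surjectivity onto $\{\overline w_i\}$.} This is where exhaustiveness is used a second time. Fix any $i\in\{1,\ldots,n-2\}$. Because $\gamma$ is infinite exhaustive, it visits $\Delta_i$ at least once; pick $s$ with $\gamma_s = \Delta_i$. By the identification just established, $x_s = \overline w_i$. Hence every $\overline w_i$ is in the image of $s\mapsto x_s$, and combined with the inclusion above, the image is exactly $\{\overline w_1,\ldots,\overline w_{n-2}\}$.

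\textbf{Anticipated obstacle.} The only subtlety is the bookkeeping around indices: making sure that "the tail walk starting at $\gamma_s$" is genuinely an exhaustive walk in the sense of Definition~\ref{def:exhaustive} (it is, since dropping a finite prefix cannot destroy the property of visiting each node infinitely often), and that the product $P_{\gamma^{(s)}}$ coincides with the partial product $P_\gamma(t:s)$ in the limit — this requires only carefully matching the indexing convention in~\eqref{eq:product} with that of $P_\gamma(t:s)$. There is also a cosmetic point about whether the $\overline w_i$ are distinct; the statement only claims the image equals the set $\{\overline w_1,\ldots,\overline w_{n-2}\}$, so no distinctness is needed. I do not expect any real difficulty here; the corollary is essentially a repackaging of Theorem~\ref{th:main1} through the APV identity.
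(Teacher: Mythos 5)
Your proof is correct and takes essentially the same route as the paper's: pass to the tail of the walk, apply Theorem~\ref{th:main1} together with~\eqref{eq:apvsconvergence} to identify $x_s$ with one of the $\overline w_i$, then invoke exhaustiveness for surjectivity. The only slip is the off-by-one you yourself flagged as the anticipated obstacle: with the convention $P_\gamma(t:s)=A_{\gamma_t}\cdots A_{\gamma_{s+1}}$, the tail walk matching $\lim_{t\to\infty}P_\gamma(t:s)$ is $\gamma_{s+1}\gamma_{s+2}\cdots$, so the identification is $x_s=\overline w_{\gamma_{s+1}}$ rather than $\overline w_{\gamma_s}$ --- which changes nothing in the stated conclusion.
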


\begin{proof}
For any $s \ge 0$, we consider the sequence $\gamma':=\gamma_{s+1}\gamma_{s+2} \cdots$, i.e., $\gamma'$ is obtained from $\gamma$ by omitting its first $s$ nodes. 
If $\gamma$ is exhaustive, then so is $\gamma'$. Then, by Theorem~\ref{th:main1},  $P_{\gamma'} = \bfo \overline{w}_{\gamma_{s+1}}$. On the other hand, by~\eqref{eq:apvsconvergence}, we have that
$P_{\gamma'} = \bfo x_s^\top$. It follows that $x_s = \overline{w}_{\gamma_{s+1}}$. This shows that the image of $s \mapsto x_s$ has finite cardinality. Finally, because $\gamma$ is exhaustive, for every $\Delta_i$, there exists an $s$ such that $\gamma_s = \Delta_i$.
\end{proof}

\subsection{Characterization of the product limits}\label{ssec:characprodlim}

Theorem~\ref{th:main1} states that $\lim_{t \to \infty}P_\gamma(t:0)$ exists and can only take value in a finite set. We describe this set below.

\paragraph{Unnormalized APVs:}

\begin{figure}
\centering
     \includegraphics{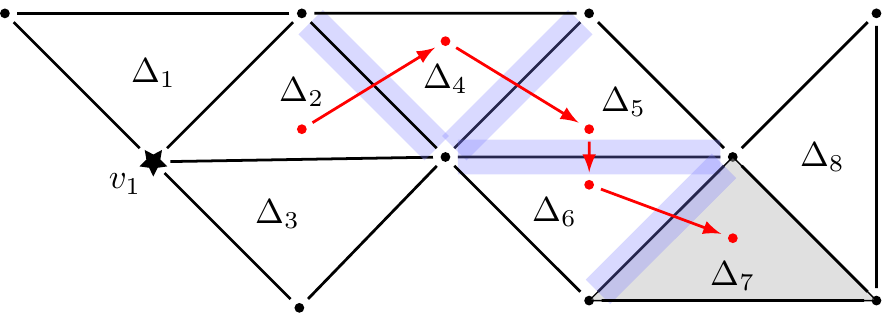}
     \caption{We illustrate the construction of the entry $w_{i,j}$ for the case where $v_j\notin\Delta_i$. The graph $G$ in the figure is a TLG, and the triangles are labeled in order of appearance in a valid RHC starting at $\Delta_1$.  We choose $\Delta_i = \Delta_7$ (the shaded triangle) and $v_j = v_1$ (the star node). There are three triangles $\Delta_{1}$, $\Delta_{2}$, and $\Delta_{3}$ that contain $v_1$, and $\Delta_{2}$ is the bottleneck for $\Delta_7$. The directed path $\gamma = \Delta_2\Delta_4\Delta_5\Delta_6\Delta_7$ is shown in red. The path crosses four edges in $G$, which are highlighted in blue. Each of these is shared by two adjacent triangles along the path $\gamma$. To these edges correspond the four ratios $r_{\gamma_p, \gamma_{p + 1}}$ defined in~\eqref{eq:defr}. The entry $w_{i,j}$ is then the product of these four ratios and the local weight  $a_{\Delta_{2},v_1}$ from $\Delta_2$.}\label{fig:illr}
\end{figure}

For each node $\Delta_i$ in $D_G$, we define a positive vector $w_i \in \R^{n}$ according to the following construction. Let $w_{i,j}$ be the $j$th entry of $w_i$ and recall that $a_i\in\R^3$ is the local weight vector of triangle $\Delta_i$,  introduced at the beginning of Section~\ref{ssec:lsm}.

Let $\Delta_{j_1},\ldots,\Delta_{j_m}$ be the triangles that contain $v_j$. By Prop.~\ref{prop:uniqueentry}, there is a unique $\Delta_{j_k}$, for some $k = 1,\ldots, m$, such that $\Delta_{j_k}$ is the bottleneck in $D_{G}(v_j)$ for $\Delta_i$. 
We have two cases depending on whether $v_j\in\Delta_i$ or $v_j\notin \Delta_i$:
\begin{description}  
    \item[Case 1: $v_j\in \Delta_i$:] In this case, $\Delta_{j_k} = \Delta_i$. We then set $w_{i,j} := a_{i,j}$.

    \item[Case 2: $v_j\notin \Delta_i$:] In this case, we let 
    $\gamma$ be a finite walk in $D_G$ from $\Delta_{j_k}$ to $\Delta_i$.

    Let $\gamma_p$ be the $p$th node in the walk, with  $\gamma_1 = \Delta_{j_k}$. For each $p = 1,\ldots, |\gamma|-1$, we let $e_p = (v_{p_1}, v_{p_2})$ be the {unique} edge in $G$ that is shared by triangles $\gamma_{p}$ and $\gamma_{p + 1}$. Define the ratio 
    \begin{equation}\label{eq:defr}
    r_{\gamma_{p},\gamma_{p + 1}}:= \frac{a_{\gamma_{p+1},v_{p_1}} + a_{\gamma_{p+1}, v_{p_2}}}{a_{\gamma_{p},v_{p_1}} + a_{\gamma_{p}, v_{p_2}}}.\end{equation}
    
    Now, define a product of the above ratios along the path $\gamma$ as follows:
$$
R_{\gamma}:=\prod^{|\gamma|-1}_{p = 1} r_{\gamma_{p},\gamma_{p+1}}.
$$
Apparently, $R_\gamma$ depends on $\gamma$. However, we will show in Prop.~\ref{prop:independenceofpath} that the choice of the walk does not affect $R_\gamma$, as long as the starting and ending nodes are fixed. 

With $R_\gamma$ defined above, we set
    \begin{equation}\label{eq:wijdef}
    w_{i,j} := a_{\gamma_1,j} R_\gamma,
    \end{equation}
    where $a_{\gamma_1, j}$ is the weight of node $v_j$ for $\gamma_1= \Delta_{j_k}$. See Fig.~\ref{fig:illr} for illustration. 
\end{description}

    In words, $r_{\gamma_{p},\gamma_{p+1}}$ is a ratio of the sums of the local weights assigned to the nodes incident to $e_p$:  in the numerator, we take the weights from $\gamma_p$, and in the denominator, the weights from the next triangle in $\gamma$, namely, $\gamma_{p+1}$. We emphasize that the {\it order} of the two subindices of $r$ matters and it reflects the orientation of the edge $\gamma_p \gamma_{p+1}$ in $\gamma$. By Assumption~\ref{ass:weightnonzero}, the denominator in the ratio is strictly positive.  
    
    Note that the above two cases can actually be unified if one extends the definition of $R_\gamma$ to allow for $\gamma$ a path of cardinality $1$ (i.e., a path comprised of a single node). Specifically, we set $R_\gamma = 1$ for any such path. Then, we can express $w_{i,j}$, for $v_j\in \Delta_i$, as $w_{i,j}= a_{i,j}R_{\Delta_i} = a_{i,j}$.

    We now prove the above claim about the {\em independence} of  $R_\gamma$ on the particular walk chosen. It is a consequence of the following fact: 

\begin{Proposition}\label{prop:independenceofpath}
     Let $\omega$ be a closed walk in $D_G$. Then, $R_\omega = 1$. 
\end{Proposition}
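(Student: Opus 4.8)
The plan is to reduce the statement about arbitrary closed walks to the case of a single ``elementary'' cycle, and then to verify the cycle case using the structural result of Proposition~\ref{prop:cycleshareedge}. The key observation is that $R_\omega$ is multiplicative under concatenation of walks: if $\omega = \omega_1 \vee \omega_2$ (with the endpoint of $\omega_1$ equal to the starting point of $\omega_2$), then $R_\omega = R_{\omega_1} R_{\omega_2}$, since the defining product in~\eqref{eq:defr}--\eqref{eq:wijdef} runs over consecutive edges traversed. Moreover, $R_{\gamma^{-1}} = R_\gamma^{-1}$ because reversing a walk inverts each ratio $r_{\gamma_p,\gamma_{p+1}}$. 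Finally, traversing a single edge back and forth contributes $r_{\Delta_i,\Delta_j} r_{\Delta_j,\Delta_i} = 1$. These three facts let me cancel ``backtracking'' in any closed walk.

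First I would set up the reduction. Given an arbitrary closed walk $\omega = \gamma_1 \gamma_2 \cdots \gamma_q \gamma_1$ in $D_G$, I proceed by induction on its length. If $\omega$ has a repeated node other than the forced repetition of the start/end, say $\gamma_a = \gamma_b$ with $1 \le a < b \le q$, then I split $\omega$ into the closed subwalk $\gamma_a \gamma_{a+1} \cdots \gamma_b$ and the complementary closed walk $\gamma_1 \cdots \gamma_a \gamma_{b+1} \cdots \gamma_1$; by multiplicativity $R_\omega$ is the product of the two $R$-values, each of which is $1$ by the induction hypothesis (both subwalks are strictly shorter). Likewise, if $\omega$ immediately backtracks, i.e. $\gamma_{p+1} = \gamma_{p-1}$ for some $p$, I delete the pair $\gamma_p \gamma_{p+1}$: the deleted segment contributes $r_{\gamma_{p-1},\gamma_p} r_{\gamma_p,\gamma_{p-1}} = 1$, leaving a shorter closed walk with the same $R$-value. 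Iterating, I am reduced to the case where $\omega$ is a cycle in $D_G$ in the sense of the paper's terminology (no repeated nodes except the endpoints, and no backtracking).

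Next I would handle the cycle case. A cycle of length $1$ or $2$ is already trivial ($R=1$ by convention, or it backtracks), so assume the cycle $\Delta_1 \Delta_2 \cdots \Delta_p \Delta_1$ has length $p \ge 3$. By Proposition~\ref{prop:cycleshareedge}, all the triangles $\Delta_1, \ldots, \Delta_p$ share a common edge $e = (v_{a}, v_{b})$ in $G$; in particular, for \emph{every} consecutive pair $\gamma_p \gamma_{p+1}$ along the cycle, the unique edge they share is this same $e$ (two triangles share at most one edge, and $e$ is shared by both). Therefore every ratio in the product is
$$
r_{\gamma_p, \gamma_{p+1}} = \frac{a_{\gamma_{p+1}, v_a} + a_{\gamma_{p+1}, v_b}}{a_{\gamma_p, v_a} + a_{\gamma_p, v_b}},
$$
and the product $R_\omega = \prod_{p=1}^{p} r_{\gamma_p, \gamma_{p+1}}$ telescopes: writing $c_i := a_{\Delta_i, v_a} + a_{\Delta_i, v_b}$ (positive by Assumption~\ref{ass:weightnonzero}, since $e$, lying in at least two triangles, is nonsimple), we get $R_\omega = \prod_{p} c_{\gamma_{p+1}}/c_{\gamma_p} = 1$ because the closed walk returns to its starting node $\gamma_1$.

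The main obstacle I anticipate is making the inductive reduction airtight: I must ensure that after splitting or deleting backtracks, the pieces are genuine closed walks in $D_G$ (the concatenation points are honest nodes, the edges used are honest edges) and that the length strictly decreases in every reduction step so the induction terminates. A secondary subtlety is that Proposition~\ref{prop:cycleshareedge} is stated for cycles of length \emph{greater than} $3$, so I should separately note the base case $p = 3$ (which is exactly the base case proved inside that proposition, where Laman-minimality forces the three triangles to share a common edge) — or simply invoke that proposition's proof covering $p \ge 3$. Once the common-edge structure is in hand, the telescoping computation is immediate.
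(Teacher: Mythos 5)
Your proof is correct and follows essentially the same route as the paper's: reduce an arbitrary closed walk to cycles (the paper does this by an edge-wise decomposition into cycles and multiplicativity of $R$; you do it more explicitly by induction, splitting at repeated nodes and cancelling backtracks via $r_{i,j}r_{j,i}=1$), then invoke Proposition~\ref{prop:cycleshareedge} to get a common shared edge and telescope the product. Your explicit handling of the reduction and of the length-$3$ base case of Proposition~\ref{prop:cycleshareedge} is, if anything, slightly more careful than the paper's.
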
 

\begin{proof}
We first assume that $\omega$ is a cycle and write $\omega = \omega_1\cdots \omega_k\omega_1$. 
By proposition~\ref{prop:cycleshareedge}, all the triangles $\omega_1,\ldots, \omega_k$ share a common edge in $G$, which we denote by $(v_1, v_2)$. Clearly,  $(v_1, v_2)$ is also the only edge shared by two distinct $\omega_i$ and $\omega_j$. 
It then follows that 
\begin{align*}
R_\omega & = r_{\omega_1,\omega_2}r_{\omega_2,\omega_3}\cdots r_{\omega_k,\omega_1} \\
& = \frac{a_{\omega_2,v_1} + a_{\omega_2,v_2}}{a_{\omega_1,v_1} + a_{\omega_1,v_2}}
\frac{a_{\omega_3,v_1} + a_{\omega_3,v_2}}{a_{\omega_2,v_1} + a_{\omega_2,v_2}}\cdots 
\frac{a_{\omega_1,v_1} + a_{\omega_1,v_2}}{a_{\omega_k,v_1} + a_{\omega_k,v_2}} \\ 
& = 1.
\end{align*}
We now assume that $\omega$ is a closed walk. 
One can always decompose $\omega$ edge-wise and use the directed edges in $\omega$ to form multiple cycles. Label these cycles as $C_1,\ldots, C_m$. Then, it should be clear that     
$R_\omega = R_{C_1}\cdots R_{C_m}$. 
Because $R_{C_i} = 1$ for each $i = 1,\ldots, m$, we have that $R_\omega = 1$. 
\end{proof}
 
Returning to the independence of $R_\gamma$ on a particular path, assume that $\gamma$ and $\gamma'$ are two walks in $D_G$ with the same starting and ending nodes.  Then, by concatenating $\gamma$ with $\gamma'^{-1}$, we obtain a closed walk $\omega$. By Prop.~\ref{prop:independenceofpath}, $R_\omega = R_\gamma R_{\gamma'^{-1}} = 1$. On the other hand, $R_{\gamma'^{-1}} = 1/R_{\gamma'}$. It then follows that $R_\gamma = R_{\gamma'}$, as is claimed above.

\begin{Example}[Unnormalized APVs in case $n=5$]\label{ex:n5partdos}\normalfont
We return to the case  $n=5$ illustrated in Example~\ref{ex:n5partuno}, with three triangles $\Delta_1$, $\Delta_2$, and $\Delta_3$. We denote the associated local weight vectors  $a_1$, $a_2$, and $a_3$. Then, the vectors $w_1$, $w_2$, and $w_3$ obtained according to the above construction are
\begin{align*}
    w_1 &= \begin{pmatrix} a_{1,1} & a_{1,2} & a_{1,3} & a_{2,4}\frac{a_{1,1} + a_{1,2}}{a_{2,1} + a_{2,2}}  &  a_{3,5} \frac{a_{1,2} + a_{1,3}}{a_{3,2} + a_{3,3}} \end{pmatrix}^\top\\
    w_2 &= \begin{pmatrix} a_{2,1} & a_{2,2} & a_{1,3}\frac{a_{2,1} + a_{2,2}}{a_{1,1} + a_{1,2}}   & a_{2,4} & a_{3,5}\frac{a_{1,2} + a_{1,3}}{a_{3,2} + a_{3,3}} \frac{a_{2,1} + a_{2,2}}{a_{1,1} + a_{1,2}}  \end{pmatrix}^\top\\
    w_3 &= \begin{pmatrix} a_{1,1} \frac{a_{3,2} + a_{3,3}}{a_{1,2} + a_{1,3}} & a_{3,2} &a_{3,3}& a_{2,4} \frac{a_{1,1} + a_{1,2}}{a_{2,1} + a_{2,2}}\frac{a_{3,2} + a_{3,3}}{a_{1,2} + a_{1,3}}  & a_{3,5} \end{pmatrix}^\top
\end{align*}\qed
\end{Example}

It should be clear that each vector $w_i$ defined above is nonnegative and nonzero (the entries $w_{i,j}$, for $v_j\in \Delta_i$, defined in case 1 cannot all be zero because they sum to one).  
However, each $w_i$ is not a probability vector because the sum of its entries is in general greater than one.   
The following theorem explains why these vectors are called unnormalized APVs:

 \begin{Theorem}\label{th:characwi}
Let $w_1,\ldots, w_{n-2}$ be the positive vectors defined above. Then, the vectors $\overline w_1,\ldots, \overline w_{n-2}$ in Theorem~\ref{th:main1} are given by normalization: 
$$
    \overline w_i := \frac{w_i}{\sum^n_{j = 1} w_{i,j}}.
$$
 \end{Theorem}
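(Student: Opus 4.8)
The plan is to prove that $\overline w_i$, the normalization of $w_i$, coincides with the probability vector appearing in Theorem~\ref{th:main1}, by showing that $w_i$ is an (unnormalized) left eigenvector in the appropriate sense for every local stochastic matrix whose triangle is visited by the walk. Concretely, I would first establish the key algebraic identity
\begin{equation}\label{eq:key-invariance}
w_i^\top A_k = w_i^\top \quad \text{for every } k = 1,\ldots, n-2,
\end{equation}
that is, $w_i$ is a common left-fixed vector of \emph{all} the local stochastic matrices (not merely the ones adjacent to $\Delta_i$). Granting \eqref{eq:key-invariance}, the proof of the theorem is short: for any infinite exhaustive walk $\gamma$ with starting node $\Delta_i$, the partial products $P_\gamma(t:0)$ satisfy $w_i^\top P_\gamma(t:0) = w_i^\top$ for all $t$; since by Theorem~\ref{th:main1} the limit $P_\gamma = \lim_t P_\gamma(t:0)$ exists and equals $\bfo\, \overline w_{\gamma_1}^\top$ for some probability vector, taking limits gives $w_i^\top \bfo\, \overline w_{\gamma_1}^\top = w_i^\top$, i.e. $(\sum_j w_{i,j})\, \overline w_{\gamma_1}^\top = w_i^\top$, so $\overline w_{\gamma_1} = w_i / \sum_j w_{i,j}$. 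Because $\gamma_1 = \Delta_i$ was arbitrary, this is exactly the claim. (One also uses that $w_i$ is nonnegative and nonzero, as already noted in the text, so the normalization is legitimate.)

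The heart of the matter is therefore \eqref{eq:key-invariance}, which I would verify coordinate by coordinate, using the explicit structure of $A_k$ from \eqref{eq:defAi}: for $v_\ell \notin \Delta_k$, $A_k$ acts as the identity on column $\ell$, so $(w_i^\top A_k)_\ell = w_{i,\ell}$ trivially, and for $v_\ell \in \Delta_k = \{v_p, v_q, v_\ell\}$ the identity to check is
\begin{equation}\label{eq:local-check}
a_{k,\ell}\,(w_{i,p} + w_{i,q} + w_{i,\ell}) = w_{i,\ell}.
\end{equation}
Wait — one must be careful: the off-diagonal columns of $A_k$ restricted to $\Delta_k$ all have the same entries $(a_{k,p}, a_{k,q}, a_{k,\ell})$ down the rows indexed by $\Delta_k$, so summing $w_i$ against column $\ell$ of $A_k$ picks up $a_{k,\ell}$ times $(w_{i,p}+w_{i,q}+w_{i,\ell})$ from the three rows in $\Delta_k$, plus nothing else. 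So \eqref{eq:local-check} is what is needed for every $v_\ell\in\Delta_k$; summing these three identities over $\ell$ is automatically consistent since $a_{k,p}+a_{k,q}+a_{k,\ell}=1$. The content of \eqref{eq:local-check} is that the three numbers $w_{i,p}, w_{i,q}, w_{i,\ell}$ are proportional to the local weights $a_{k,p}, a_{k,q}, a_{k,\ell}$ \emph{with the same constant of proportionality} $c_{i,k} := w_{i,p}+w_{i,q}+w_{i,\ell}$. So the real task is: for each triangle $\Delta_k$, the restriction of $w_i$ to the three nodes of $\Delta_k$ is a scalar multiple of $a_k$.

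To prove that proportionality I would use the path-construction defining $w_{i,j}$ together with Proposition~\ref{prop:independenceofpath} (independence of $R_\gamma$ from the path). Fix a triangle $\Delta_k = \{v_p, v_q, v_\ell\}$ and consider an edge of $\Delta_k$, say $(v_p, v_q)$. Tracking the bottleneck for $\Delta_i$ in $D_G(v_p)$ versus in $D_G(v_q)$ and choosing convenient paths through $\Delta_k$, one finds that $w_{i,p}$ and $w_{i,q}$ are each computed along paths that can be taken to pass through $\Delta_k$, and the ratio $w_{i,p}/w_{i,q}$ telescopes down to $\big(a_{k,p}\cdot(\text{stuff at }\Delta_k)\big)/\big(a_{k,q}\cdot(\text{same stuff})\big) = a_{k,p}/a_{k,q}$ — the ``stuff'' being the common tail of the two paths from $\Delta_k$ back toward $\Delta_i$ and the sums-of-weights factors $r_{\gamma_{p},\gamma_{p+1}}$ which are identical for $p$ and $q$ since both use the edge of $\Delta_k$ they share with the next triangle on the common tail. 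The cases $v_p \in \Delta_i$ (where $w_{i,p} = a_{i,p}$) and $v_p \notin \Delta_i$ need to be handled together, which is why it is convenient to adopt the unified convention $R_\gamma = 1$ for a one-node path mentioned in the text; with that convention every $w_{i,p}$ is $a_{\gamma_1,p} R_\gamma$ and the telescoping argument is uniform. The main obstacle I anticipate is precisely this bookkeeping with bottlenecks: one must argue that the bottlenecks in $D_G(v_p)$, $D_G(v_q)$, $D_G(v_\ell)$ for $\Delta_i$ can be connected to $\Delta_k$ by paths sharing a common tail, so that Proposition~\ref{prop:independenceofpath} applies and the non-$\Delta_k$ ratio factors cancel cleanly; the degenerate configurations ($\Delta_i = \Delta_k$, or $v_p \in \Delta_i$ but $v_q \notin \Delta_i$, or $\Delta_k$ itself being a bottleneck for some of its nodes) are where the argument needs the most care. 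Once \eqref{eq:local-check} is secured for all triangles, \eqref{eq:key-invariance} and hence the theorem follow as above.
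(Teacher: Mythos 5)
Your reduction of the theorem to the identity $w_i^\top A_k = w_i^\top$ for \emph{all} $k$ is where the argument breaks: that identity is false in general, and so is the equivalent form you correctly derive from it, namely that the restriction of $w_i$ to the three nodes of an arbitrary triangle $\Delta_k$ is proportional to $a_k$. By construction (Case 1 of Sec.~\ref{ssec:characprodlim}) the restriction of $w_i$ to $\Delta_i$ equals $a_i$, but for $k\neq i$ the restriction of $w_i$ to $\Delta_k$ is generally \emph{not} proportional to $a_k$. Example~\ref{ex:n5partdos} already exhibits this: the restriction of $w_1$ to $\Delta_2=\{v_1,v_2,v_4\}$ is $\bigl(a_{1,1},\,a_{1,2},\,a_{2,4}\tfrac{a_{1,1}+a_{1,2}}{a_{2,1}+a_{2,2}}\bigr)$, which is proportional to $(a_{2,1},a_{2,2},a_{2,4})$ only if $(a_{1,1},a_{1,2})\propto(a_{2,1},a_{2,2})$ --- a nongeneric condition. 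Concretely, with $a_1=(\tfrac13,\tfrac13,\tfrac13)$ and $a_2=(\tfrac12,\tfrac14,\tfrac14)$ one gets $(w_1^\top A_2)_1=\tfrac49\neq\tfrac13=w_{1,1}$. The telescoping argument you sketch to establish the proportionality cannot be repaired, because the bottlenecks used to define $w_{i,p}$, $w_{i,q}$, $w_{i,\ell}$ need not be $\Delta_k$ at all (in the example the bottlenecks for $v_1,v_2$ relative to $\Delta_1$ are $\Delta_1$ itself, not $\Delta_2$), so the ``common tail through $\Delta_k$'' on which your cancellation relies does not exist.

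The correct relation, and the one the paper actually uses, is Prop.~\ref{prop:wjAiwi}: for \emph{adjacent} triangles, $w_j^\top A_i = r_{i,j}\, w_i^\top$, i.e.\ multiplying by $A_i$ does not fix $w_j$ but converts it into a multiple of the \emph{different} vector $w_i$. One then telescopes these relations along a walk, accumulating a product of ratios that equals $1$ exactly when the walk closes up (Prop.~\ref{prop:independenceofpath}); this yields $w_i^\top P_\gamma(t:0)=w_i^\top$ only at those times $t$ at which the walk has returned to $\Delta_i$ or to a neighbor of $\Delta_i$ (Cor.~\ref{cor:closedPratio1}), not for all $t$ as you assert. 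Your final limiting step (evaluating $w_i^\top\bfo\,\overline w^\top=(\sum_j w_{i,j})\,\overline w^\top$ and normalizing) is fine and matches the paper, but it must be run along the subsequence of such return times, which exist because the walk is exhaustive, rather than along all partial products.
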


\subsection{Surjectivity of left-eigenvector map}

The remainder of the section concerns the design of stochastic matrices that yield a desired rank one limit for the product $P_\gamma$. We have the following theorem:

\begin{Theorem}\label{th:surj}
Let $\overline{w}^*$ be a positive probability vector. Then, for any given $\Delta_i$, there exist local weight vectors $a_j \in \R^3$, for $ \Delta_j \in D_G$, such that $\overline w_i=\overline w^*$.
\end{Theorem}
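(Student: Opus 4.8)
The plan is to induct on the number $n$ of nodes of $G$, proving the slightly stronger statement that the weight vectors $a_j$ can be taken \emph{strictly positive}. Strict positivity automatically gives Assumption~\ref{ass:weightnonzero}, makes the vector $w_i$ of Section~\ref{ssec:characprodlim} strictly positive, and, by Theorem~\ref{th:characwi}, reduces the goal to producing weights for which $w_i$ is a positive scalar multiple of $\overline w^*$. The base case $n=3$ is immediate: $G$ is the single triangle $\Delta_i$, all of its nodes lie in $\Delta_i$, so $w_i=a_i$ and one simply sets $a_i=\overline w^*$.

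For the inductive step I would first invoke Proposition~\ref{prop:hennebergfromany} to fix an RHC that starts with $\Delta_i$ and builds $G$, writing its triangles in order of appearance as $T_1=\Delta_i,T_2,\dots,T_{n-2}$. The last triangle $T_{n-2}$ attaches a new degree-two node $v_n$ to an edge $e=(v_a,v_b)$ that already belongs to some earlier triangle $\Delta_s\in\{T_1,\dots,T_{n-3}\}$; deleting $v_n$ leaves a TLG $G'$ on $v_1,\dots,v_{n-1}$, built by $T_1,\dots,T_{n-3}$ and still containing $\Delta_i$ as a triangle. I will write $w_i'$ for the vector of Section~\ref{ssec:characprodlim} computed inside $G'$ and abbreviate $a_\Delta(v_a,v_b):=a_{\Delta,v_a}+a_{\Delta,v_b}$.

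The crux is to show that, for every $j\le n-1$, the entry $w_{i,j}$ computed in $G$ coincides with $w_{i,j}'$ computed in $G'$, and in particular does not depend on $a_{T_{n-2}}$. The argument I have in mind is that $D_G$ is obtained from $D_{G'}$ by adjoining the single node $T_{n-2}$, whose only neighbours in $D_G$ are the triangles of $G'$ containing $e$ (the other two edges of $T_{n-2}$ meet $v_n$, hence lie in no other triangle), and these neighbours all sit in $D_{G'}(v_a)\cap D_{G'}(v_b)$. Hence any walk in $D_G$ through $T_{n-2}$ can be rerouted to a walk in $D_{G'}$ with the same endpoints by replacing each passage $\alpha\,T_{n-2}\,\beta$ (with $\alpha,\beta\ni e$) by the edge $\alpha\beta$. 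From this one deduces that $D_G(v_j)$ differs from $D_{G'}(v_j)$ only when $v_j\in\{v_a,v_b\}$ (then only by the extra node $T_{n-2}$), and that in all cases the bottleneck in $D_G(v_j)$ for $\Delta_i$ equals the bottleneck $\Delta^*$ in $D_{G'}(v_j)$ for $\Delta_i$ (existence via Proposition~\ref{prop:uniqueentry}, uniqueness via Lemma~\ref{lem:uniquebottlen}). Since $\Delta^*$ is a triangle of $G'$ and $D_{G'}$ is connected, the walk used to evaluate $R_\gamma$ in the formula for $w_{i,j}$ may be chosen inside $D_{G'}$; as $R_\gamma$ is independent of that choice (the remark after Proposition~\ref{prop:independenceofpath}), $w_{i,j}$ involves only $a_{T_1},\dots,a_{T_{n-3}}$ and equals $w_{i,j}'$. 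By contrast $v_n$ lies in $T_{n-2}$ only, so $D_G(v_n)=\{T_{n-2}\}$ is its own bottleneck for $\Delta_i$; choosing a walk $\gamma=T_{n-2}\,\Delta_s\cdots\Delta_i$ whose tail $\gamma':=\Delta_s\cdots\Delta_i$ stays in $D_{G'}$ then gives
\[
w_{i,n}=a_{T_{n-2},v_n}\,r_{T_{n-2},\Delta_s}\,R_{\gamma'}=\frac{a_{T_{n-2},v_n}}{a_{T_{n-2}}(v_a,v_b)}\;K,\qquad K:=a_{\Delta_s}(v_a,v_b)\,R_{\gamma'},
\]
where $K$ depends only on the $G'$-weights, is well defined (in $\gamma'$ and in $\Delta_s$) by Proposition~\ref{prop:independenceofpath}, and is strictly positive once those weights are.

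Given the claim the induction closes cheaply, and this is also where I would expect no real difficulty. Since $\overline w^*$ is positive, $0<\overline w^*_n<1$, so $\overline u^*:=\tfrac1{1-\overline w^*_n}(\overline w^*_1,\dots,\overline w^*_{n-1})$ is a positive probability vector on $n-1$ coordinates; applying the strengthened induction hypothesis to $G'$ with the \emph{same} starting triangle $\Delta_i$ yields strictly positive weights $a_{T_1},\dots,a_{T_{n-3}}$ with $\overline w_i'=\overline u^*$, i.e.\ $w_i'=c\,(\overline w^*_1,\dots,\overline w^*_{n-1})$ for some $c>0$. By the claim the first $n-1$ entries of $w_i$ are then $c\,\overline w^*_1,\dots,c\,\overline w^*_{n-1}$ and the number $K>0$ is fixed, so it only remains to pick $a_{T_{n-2}}$ with $w_{i,n}=c\,\overline w^*_n$: writing $t:=a_{T_{n-2},v_n}$ we have $w_{i,n}=\tfrac t{1-t}K$, and $t\mapsto\tfrac t{1-t}K$ is a bijection $(0,1)\to(0,\infty)$, so there is a unique admissible $t\in(0,1)$; splitting the remaining mass $1-t>0$ positively between $v_a$ and $v_b$ keeps $a_{T_{n-2}}$ strictly positive. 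Then $w_i=c\,\overline w^*$, hence $\overline w_i=\overline w^*$, completing the induction. The main obstacle is the claim of the third paragraph — that appending the last RHC triangle leaves every previously computed coordinate of $w_i$ untouched; all the care goes into the bottleneck-invariance argument and into tracking which quantities depend on $a_{T_{n-2}}$ versus only on the $G'$-weights, while the rest is the routine reduction of an $n$-coordinate target to an $(n-1)$-coordinate one plus a scalar equation with an obvious solution.
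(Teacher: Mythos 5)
Your proof is correct and follows essentially the same route as the paper: induction on $n$, Proposition~\ref{prop:hennebergfromany} to start the RHC at $\Delta_i$, realization of the first $n-1$ coordinates via the induction hypothesis on $G'$, and then tuning the last triangle's weight vector through the identity $w_{i,n}=\frac{a_{T_{n-2},v_n}}{a_{T_{n-2},v_a}+a_{T_{n-2},v_b}}\,K$ with $K$ independent of $a_{T_{n-2}}$, which is exactly the paper's Eq.~\eqref{eq:laststep}. The only difference is that you explicitly justify (via bottleneck invariance and Proposition~\ref{prop:independenceofpath}) that appending the last triangle leaves the previously computed coordinates of $w_i$ unchanged, a step the paper takes implicitly; this is a welcome addition rather than a divergence.
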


The statement is not surprising when one compares the dimensions of the local weight vectors (totaling $2(n-2)$) with the dimension of a probability vector (which is $(n-1)$). 
Nevertheless, the proof we provide is constructive. 
Since the proof relies on a different set of arguments from the ones needed for the previous theorems and since it is relatively simpler,  we provide it here.

\begin{proof}
We proceed by induction on the number of nodes in the graph $G$. 
For the base case $n=3$, we follow Theorem~\ref{th:characwi} and set $a=\overline w^*$.

Now, let us assume that the statement holds for all TLGs $G'$ on $n$ nodes.  
Given a TLG $G$ on $(n + 1)$ nodes  it can be obtained from a TLG graph $G'$ on $n$ nodes by performing one step of RHC.  
By Prop.~\ref{prop:hennebergfromany}, we can assume that the subgraph $G'$ contains $\Delta_i$ (specifically, by Prop.~\ref{prop:hennebergfromany}, we can choose an RHC that starts with $\Delta_i$).  
We denote by $v_{n+1}$ the newly added node going from $G'$ to $G$.

 Let $\overline w^* = (\overline w^*_1,\ldots, \overline w^*_{n+1}) \in \R^{n+1}$ be an arbitrary positive probability vector. 
By the induction hypothesis, we can choose local weights $a_j$, $1 \leq j \leq n-2$, so that the unnormalized vector $(w_{i,1},\ldots,w_{i,n})$ satisfies
$$(w_{i,1},\ldots, w_{i,n}) \propto \frac{(\overline w^*_1,\ldots, \overline w^*_n)}{\sum^n_{i = 1} \overline w^*_i}, $$ i.e., the right hand side is realized as the probability vector for $G'$.  

Let $\gamma$ be a path from  $\Delta_{n + 1}$ to $\Delta_i$.  
Without loss of generality, we can assume that $\Delta_n$ is the second node in the path, so $\Delta_n$ and $\Delta_{n + 1}$ are adjacent,  and we can  label the nodes of $G$ so that $(v_1,v_2)$ is the common edge shared by $\Delta_n$ and $\Delta_{n+1}$. 
Let $\gamma'$ be a subpath of $\gamma$ that starts from $\Delta_n$ and ends at $\Delta_i$.  
Then, from the construction of $w_i$ in Eq.~\eqref{eq:wijdef}, we have that 
\begin{align}
    w_{i,{n + 1}} & = a_{n + 1, n+1} R_\gamma  \nonumber \\ 
    & = a_{n + 1, n+1}  \frac{a_{n, 1} + a_{n,2}}{a_{n+1, 1} + a_{n +1, 2}} R_{\gamma'} \nonumber \\
    & = \frac{a_{n + 1, n+1}}{a_{n+1, 1} + a_{n +1, 2}} (a_{n, 1} + a_{n,2} ) R_{\gamma'}, \label{eq:laststep}
\end{align}
where the second equality follows by unwrapping the definition of $r_{\gamma_1,\gamma_2}$ and the third equality is just a rearrangement. 
Now choose the local weight vector $a_{n+1} = (a_{n+1, 1}, a_{n+1, 2}, a_{n+1,n+1})$ so that the last entry $w_{i, n + 1}$ of $w_i$ satisfies  $$w_{i, n + 1} = \frac{w_{i,n}}{\overline w^*_{n}} \,\overline w^*_{n+1}.$$ 
This can done since $(a_{n,1} + a_{n,2})R_{\gamma'}$ is independent of the local weight vector $a_{n+1}$. 
By normalizing $w_i$, we obtain $\overline w_i = \overline w^*$.
\end{proof}

\section{Proof of Main Results}\label{sec:proofs}
This section is devoted to the proofs of the main theorems stated in Sec.~\ref{sec:mainresults}. 
\subsection{Properties of unnormalized APVs}

We start by deriving some key properties of the unnormalized APVs $w_i$ of the products $P_\gamma$ introduced above. 

\begin{Proposition}\label{prop:wiAiwi}
Let $w_i$ be defined in Sec.~\ref{ssec:characprodlim} and $A_i$ be the corresponding  local stochastic matrix. Then, $w_i^\top A_i = w_i^\top$ for any $i = 1,\ldots, n-2$. 
\end{Proposition}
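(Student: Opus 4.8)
The plan is to verify the identity $w_i^\top A_i = w_i^\top$ coordinate by coordinate, exploiting the very special structure of $A_i$: it acts as the identity outside the triangle $\Delta_i$, and its principal block on $\Delta_i = \{v_a, v_b, v_c\}$ is the rank-one stochastic matrix $\mathbf{1}(a_{i,a}, a_{i,b}, a_{i,c})$. Writing $A_i = \sum_{v_p, v_q \in \Delta_i} a_{i,q} e_p e_q^\top + \sum_{v_p \notin \Delta_i} e_p e_p^\top$, the product $w_i^\top A_i$ has $q$th entry equal to $w_{i,q}$ whenever $v_q \notin \Delta_i$ (since those columns of $A_i$ are standard basis vectors $e_q$), so the identity holds trivially off the triangle. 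Hence the whole content is in the three coordinates indexed by $v_a, v_b, v_c$.

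For $v_q \in \Delta_i$, the $q$th column of $A_i$ is $a_{i,q}(e_a + e_b + e_c)$, so $(w_i^\top A_i)_q = a_{i,q}\,(w_{i,a} + w_{i,b} + w_{i,c})$. Thus what must be shown is
\begin{equation}\label{eq:wiblock}
w_{i,a} + w_{i,b} + w_{i,c} = 1,
\end{equation}
because then $(w_i^\top A_i)_q = a_{i,q} \cdot 1 = w_{i,q}$ for $v_q \in \Delta_i$ (using Case 1 of the construction, $w_{i,q} = a_{i,q}$ for the three nodes of $\Delta_i$). But \eqref{eq:wiblock} is immediate: by Case 1 of the definition of $w_i$ in Sec.~\ref{ssec:characprodlim}, the three entries $w_{i,a}, w_{i,b}, w_{i,c}$ are exactly the components $a_{i,a}, a_{i,b}, a_{i,c}$ of the local weight vector $a_i$, which is a probability vector and hence sums to one. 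This closes the argument.

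I do not anticipate a genuine obstacle here; the proposition is essentially a bookkeeping consequence of three facts already in place: the explicit form \eqref{eq:defAi} of $A_i$, the fact that Case 1 of the $w_i$-construction sets $w_{i,j} = a_{i,j}$ precisely on the nodes of $\Delta_i$, and the normalization $\sum_j a_{i,j} = 1$ of the local weight vector. The only point requiring a moment of care is making sure the entries of $w_i$ on the three triangle-nodes really come from Case 1 and not Case 2 — but that is guaranteed because for $v_j \in \Delta_i$ the bottleneck $\Delta_{j_k}$ in $D_G(v_j)$ for $\Delta_i$ is $\Delta_i$ itself, so Case 1 applies (equivalently, using the unified convention $R_{\Delta_i} = 1$, one gets $w_{i,j} = a_{i,j} R_{\Delta_i} = a_{i,j}$). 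I would present the proof in the order: (i) reduce to the triangle coordinates via the identity block of $A_i$; (ii) compute the triangle-coordinate entries of $w_i^\top A_i$ as $a_{i,q}(w_{i,a}+w_{i,b}+w_{i,c})$; (iii) invoke Case 1 and $\sum_j a_{i,j}=1$ to conclude.
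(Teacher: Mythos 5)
Your proposal is correct and is essentially the paper's own proof: both reduce to the three triangle coordinates using the identity structure of $A_i$ off $\Delta_i$, and then conclude from $w_{i,j}=a_{i,j}$ on $\Delta_i$ (Case 1) together with $\sum_j a_{i,j}=1$; you merely write the computation entrywise where the paper writes it in block form $A_i=\diag(\mathbf{1}_3 a_i^\top, I_{n-3})$. No gaps.
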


\begin{proof}
The result is a direct computation. Assume without loss of generality that $\Delta_i$ is comprised of the vertices $v_{1}$, $v_2$, and $v_3$. Then, the matrix $A_i$ takes the form $A_i = \operatorname{diag}(\mathbf{1}_3 a^\top_i ,I_{n-3})$ where $a_i := (a_{i,1}, a_{i,2}, a_{i,3})$. It thus suffices to show that $$(w_{i,1}, w_{i,2}, w_{i,3})^\top \mathbf{1}_3 a^\top_i = (w_{i,1}, w_{i,2}, w_{i,3})^\top,$$
which follows from the fact that $(w_{i,1}, w_{i,2}, w_{i,3}) = a_i$ by  construction of the vector $w_i$. 
\end{proof}

The following Proposition is a major building block in the proofs of the main theorems.

\begin{Proposition}\label{prop:wjAiwi}
Let $\Delta_i$ and $\Delta_j$ be any two adjacent triangles in $G$. 

Then, 
$$
w^\top_j A_i = r_{i,j} w_i^\top,
$$
where $r_{i,j}$ is defined in Eq.~\eqref{eq:defr}.
\end{Proposition}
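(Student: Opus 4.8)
\textbf{Proof proposal for Proposition~\ref{prop:wjAiwi}.}

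The plan is to reduce the identity $w_j^\top A_i = r_{i,j} w_i^\top$ to a coordinate-by-coordinate comparison, exploiting the fact that $A_i$ acts as the identity outside the three columns indexed by $\Delta_i$, so the two vectors $w_j^\top A_i$ and $r_{i,j} w_i^\top$ agree automatically on all coordinates $v_\ell \notin \Delta_i$ provided we can relate the $v_\ell$-entries of $w_j$ and $w_i$ through the ratio $r_{i,j}$; then we handle separately the three coordinates sitting in $\Delta_i$. Write $\Delta_i = \{v_1,v_2,v_3\}$ without loss of generality, so $A_i = \operatorname{diag}(\bfo_3 a_i^\top, I_{n-3})$, and let $(v_1,v_2)$ be the edge shared by $\Delta_i$ and $\Delta_j$, so that $r_{i,j} = (a_{j,1}+a_{j,2})/(a_{i,1}+a_{i,2})$ by \eqref{eq:defr}.

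The first, and I expect the main, step is the key \emph{off-support} identity: for every $v_\ell$ with $\ell \ge 4$ we must show $w_{j,\ell} = r_{i,j}\, w_{i,\ell}$. This is where Proposition~\ref{prop:uniqueentry} and Proposition~\ref{prop:independenceofpath} do the real work. Fix $v_\ell$ and let $D_G(v_\ell)$ be the induced subgraph of triangles containing $v_\ell$; let $\Delta^{(i)}$ and $\Delta^{(j)}$ be the bottlenecks in $D_G(v_\ell)$ for $\Delta_i$ and $\Delta_j$ respectively. Since $\Delta_i$ and $\Delta_j$ are adjacent, any walk from $D_G(v_\ell)$ to $\Delta_j$ can be extended by one edge to a walk to $\Delta_i$ and vice versa, which forces $\Delta^{(i)} = \Delta^{(j)} =: \Delta^*$ whenever $v_\ell \notin \Delta_i \cup \Delta_j$ — I would argue this by the defining property of the bottleneck, checking that the edge $\Delta_j\Delta_i$ cannot pass through $D_G(v_\ell)$ because $v_\ell$ is in neither triangle. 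Then, by definition \eqref{eq:wijdef}, $w_{i,\ell} = a_{\Delta^*,\ell}\, R_{\omega}$ for a walk $\omega$ from $\Delta^*$ to $\Delta_i$, and $w_{j,\ell} = a_{\Delta^*,\ell}\, R_{\omega'}$ for a walk $\omega'$ from $\Delta^*$ to $\Delta_j$; taking $\omega' = \omega \vee \Delta_i \to \Delta_j$ (append the edge $\Delta_i\Delta_j$) and using path-independence from Prop.~\ref{prop:independenceofpath}, we get $R_{\omega'} = R_\omega \cdot r_{\Delta_i,\Delta_j} $ — but note the orientation in \eqref{eq:defr} means the appended ratio is $(a_{j,1}+a_{j,2})/(a_{i,1}+a_{i,2}) = r_{i,j}$, so $w_{j,\ell} = r_{i,j}\, w_{i,\ell}$ as desired. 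The degenerate sub-cases $v_\ell \in \Delta_j \setminus \Delta_i$ (so $v_\ell = v_1$ or $v_2$, handled below) and $v_\ell \in \Delta_i$ require separate bookkeeping, but $\ell \ge 4$ means $v_\ell$ is in neither, so the clean argument applies.

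The second step is the \emph{on-support} computation for coordinates $v_1,v_2,v_3$. Here $(w_j^\top A_i)_\ell = (w_{j,1}+w_{j,2}+w_{j,3}) \, a_{i,\ell}$ for $\ell \in \{1,2,3\}$, since the $\ell$-th column of $A_i$ is $a_{i,\ell}\bfo_3$ on rows $1,2,3$ and zero elsewhere. So I need to show $w_{j,1}+w_{j,2}+w_{j,3} = r_{i,j}(a_{i,1}+a_{i,2}+a_{i,3}) = r_{i,j}$, because then $(w_j^\top A_i)_\ell = r_{i,j} a_{i,\ell} = r_{i,j} w_{i,\ell}$ using $w_{i,\ell}=a_{i,\ell}$ from Case 1 of the construction. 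Now $v_3 \notin \Delta_j$ (it is not on the shared edge and cannot be the third vertex of $\Delta_j$ since two triangles share at most one edge), so by the off-support argument $w_{j,3} = r_{i,j} w_{i,3} = r_{i,j} a_{i,3}$; and for $v_1,v_2 \in \Delta_j$ we have $w_{j,1} = a_{j,1}$, $w_{j,2}=a_{j,2}$ by Case 1 applied to $\Delta_j$. Hence $w_{j,1}+w_{j,2}+w_{j,3} = a_{j,1} + a_{j,2} + r_{i,j} a_{i,3} = (a_{i,1}+a_{i,2}) r_{i,j} + a_{i,3} r_{i,j} = r_{i,j}$, where the middle equality is exactly the definition $r_{i,j} = (a_{j,1}+a_{j,2})/(a_{i,1}+a_{i,2})$ and the last uses $a_{i,1}+a_{i,2}+a_{i,3}=1$. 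Combining the two steps gives $w_j^\top A_i = r_{i,j} w_i^\top$ entrywise. The only real subtlety I anticipate is pinning down rigorously that the bottlenecks for $\Delta_i$ and $\Delta_j$ in $D_G(v_\ell)$ coincide when $v_\ell$ lies in neither triangle; once that is in hand, everything else is the telescoping of ratios already licensed by Prop.~\ref{prop:independenceofpath}.
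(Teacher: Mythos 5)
Your proposal is correct and follows essentially the same route as the paper's proof: the same relabeling, the same split between the three coordinates of $\Delta_i$ and the remaining ones, the same coinciding-bottleneck argument obtained by appending the edge $\Delta_i\Delta_j$ to a walk and invoking path-independence of $R_\gamma$, and the identical on-support computation $a_{j,1}+a_{j,2}+r_{i,j}\,a_{i,3}=r_{i,j}$. The only loose ends are two coordinates you wave at: $\Delta_j\setminus\Delta_i$ is the \emph{third} vertex of $\Delta_j$ (not $v_1,v_2$, which form $\Delta_i\cap\Delta_j$), so one index $\ell\ge 4$ does lie in $\Delta_j$, and $v_3$ lies in $\Delta_i$; in both of these cases the bottleneck in $D_G(v_\ell)$ for the far triangle must lie on the length-one walk joining $\Delta_i$ and $\Delta_j$ and hence equals the near triangle, after which your ratio computation goes through verbatim.
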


\begin{proof}
We  prove the proposition by showing that $w_{j,k} = r_{i,j} w_{i,k}$  for all $k = 1,\ldots, n$.  
To do so, we first relabel the vertices (if necessary) so that $\Delta_i$ (resp. $\Delta_j$) is comprised of vertices $v_1$, $v_2$, and $v_3$ (resp. $v_1$, $v_2$, and $v_4$). Then, we have that $A_i = \operatorname{diag}(\mathbf{1}_3 a^\top_i ,I_{n-3})$ with $a_i = (a_{i,1}, a_{i,2}, a_{i,3})$. We consider below two cases for the  subindex $k$: 
\begin{description}
\item[Case 1: $k = 1,2,3$.] In this case, it suffices to show that 
$$(w_{j,1}, w_{j,2}, w_{j,3})^\top \mathbf{1}_3 a_i^\top = r_{i,j} (w_{i,1}, w_{i,2}, w_{i,3})^\top.$$
First, it should be clear that 
$$
(w_{i,1}, w_{i,2}, w_{i,3}) =   (a_{i,1}, a_{i,2}, a_{i,3}).
$$
Next, note that $\Delta_i$ is the bottleneck of $D_G(v_3)$ for $\Delta_j$ because $\Delta_i$ and $\Delta_j$ are adjacent. The construction of $w_j$ described in Sec.~\ref{sec:mainresults} yields that
$$
(w_{j,1}, w_{j,2}, w_{j,3}) = (a_{j,1}, a_{j,2}, a_{i,3}r_{i,j}).
$$
It then follows that
\begin{align*}
(w_{j,1}, w_{j,2}, w_{j,3})^\top \mathbf{1}_3 = a_{j,1} + a_{j,2} + a_{i,3} \frac{a_{j,1} + a_{j,2}}{a_{i,1} + a_{i,2}}  = \frac{a_{j,1} + a_{j,2}}{a_{i,1} + a_{i,2}}= r_{i,j},
\end{align*}
where we have used the fact that $a_{i,1} + a_{i,2} + a_{i,3} = 1$. 

\item[Case 2: $k = 4,\ldots,n$.] 
It suffices to show that $w_{j,4} = r_{i,j} w_{i,4}$ (because the principal submatrix of $A$ formed by the last $(n-3)$ rows/columns is the identity matrix). 
From Prop.~\ref{prop:uniqueentry}, there is a unique bottleneck $\Delta^*$ in  $D_G(v_k)$ for $\Delta_i$. 
Because $\Delta_i$ and $\Delta_j$ are adjacent, the same node $\Delta^*$ is also the bottleneck in $D_G(v_k)$ for $\Delta_j$. 
To see this, let $\gamma$ be an arbitrary walk from a node in $D_G(v_k)$ to $\Delta_j$. Then, $\gamma':= \gamma \vee \Delta_i$ is a walk from the same node in $D_G(v_k)$ to $\Delta_i$. Since $\Delta^*$ is the bottleneck for $\Delta_i$, it is necessarily contained in $\gamma'$ and, hence, $\gamma$. Thus, all walks from $D_G(v_k)$ to $\Delta_j$ contain $\Delta^*$.  Since by lemma~\ref{lem:uniquebottlen} the bottleneck is unique, it has be to $\Delta^*$.

Now, let $\gamma$ be a walk from the bottleneck $\Delta^*$ to $\Delta_j$ and  $\gamma':= \gamma\vee\Delta_i$ as above. Then, $R_{\gamma'} = r_{j,i} R_{\gamma}$. On the other hand, we have $w_{i,k} = a_{\Delta^*, v_k} R_{\gamma'}$ and $w_{j,k} = a_{\Delta^*, v_k} R_{\gamma}$. It follows that $w_{i,k} = r_{j,i}w_{j,k}$. Because $r_{i,j}r_{j,i} = 1$, we have $w_{j,k} = r_{i,j}w_{i,k}$. 
\end{description}
The proof is now complete.
\end{proof}

The above proposition has important implications as we state below: 

\begin{Corollary}\label{cor:closedPratio1}
Let $\gamma$ be a finite walk in $D_G$ that starts at $\Delta_i$ and ends at either $\Delta_i$ or at a node adjacent to $\Delta_i$. 
Then, 
$w^\top_i P_\gamma = w^\top_i$.
\end{Corollary}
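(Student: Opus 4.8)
The plan is to first prove a slightly more general identity and then specialize it. Concretely, I would establish, by induction on $|\gamma|$, that for \emph{every} finite walk $\gamma = \Delta_{i_1}\Delta_{i_2}\cdots\Delta_{i_k}$ in $D_G$ one has
\[
w_{i_k}^\top P_\gamma = R_\gamma\, w_{i_1}^\top, \qquad (\star)
\]
where $R_\gamma$ is the product of ratios from~\eqref{eq:defr}, with the convention $R_\gamma = 1$ when $|\gamma| = 1$. The base case $|\gamma| = 1$ is precisely Proposition~\ref{prop:wiAiwi}. For the inductive step, write $\gamma = \Delta_{i_1}\gamma''$ with $\gamma'' := \Delta_{i_2}\cdots\Delta_{i_k}$, so that $P_\gamma = P_{\gamma''}A_{i_1}$. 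Applying the induction hypothesis to $\gamma''$ gives $w_{i_k}^\top P_{\gamma''} = R_{\gamma''}\, w_{i_2}^\top$. Since $\Delta_{i_1}$ and $\Delta_{i_2}$ are consecutive in the walk $\gamma$, they are adjacent triangles in $G$, so Proposition~\ref{prop:wjAiwi} gives $w_{i_2}^\top A_{i_1} = r_{i_1,i_2}\, w_{i_1}^\top$. Combining these and using $R_\gamma = r_{i_1,i_2}\,R_{\gamma''}$ yields $(\star)$.

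With $(\star)$ available, the corollary follows by distinguishing whether $\gamma$ ends at $\Delta_i$ or at a neighbour of it. If $\gamma$ is itself a closed walk (i.e.\ $\Delta_{i_k} = \Delta_i = \Delta_{i_1}$), then $(\star)$ gives $w_i^\top P_\gamma = R_\gamma\, w_i^\top$, and $R_\gamma = 1$ by Proposition~\ref{prop:independenceofpath}, so $w_i^\top P_\gamma = w_i^\top$. If instead $\Delta_{i_k}$ is a neighbour of $\Delta_i$ distinct from it, I would form the closed walk $\omega := \gamma\vee\Delta_i$, for which $P_\omega = A_i P_\gamma$. Applying $(\star)$ to $\omega$, which starts and ends at $\Delta_i$, gives $w_i^\top P_\omega = R_\omega\, w_i^\top = w_i^\top$, again using $R_\omega = 1$ from Proposition~\ref{prop:independenceofpath}. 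On the other hand, $w_i^\top A_i = w_i^\top$ by Proposition~\ref{prop:wiAiwi}, so $w_i^\top P_\omega = w_i^\top A_i P_\gamma = w_i^\top P_\gamma$; combining the two displays gives $w_i^\top P_\gamma = w_i^\top$.

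I do not expect a genuine obstacle here: the argument is a telescoping that alternates Proposition~\ref{prop:wiAiwi} (to absorb a matrix whose triangle index matches the current vector's index) with Proposition~\ref{prop:wjAiwi} (to slide the vector index along one adjacency, at the cost of a factor $r$), and then uses Proposition~\ref{prop:independenceofpath} to collapse the accumulated product of ratios around a closed walk to $1$. The only delicate points are bookkeeping ones: keeping the matrix order in $P_\gamma = A_{i_k}\cdots A_{i_1}$ straight so that peeling $A_{i_1}$ off the \emph{right} corresponds to peeling $\Delta_{i_1}$ off the \emph{front} of $\gamma$; matching the two subscripts of $r_{i_1,i_2}$ to the orientation of the shared edge exactly as in~\eqref{eq:defr}; and noting that consecutive nodes of a walk in the simple graph $D_G$ are always distinct, so Proposition~\ref{prop:wjAiwi} applies at each step.
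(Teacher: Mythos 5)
Your proposal is correct and takes essentially the same route as the paper: both telescope Proposition~\ref{prop:wjAiwi} along the walk, absorb the repeated $A_i$ with Proposition~\ref{prop:wiAiwi}, and collapse the accumulated product of ratios around a closed walk to $1$ via Proposition~\ref{prop:independenceofpath}. The only difference is organizational — you isolate the telescoping as a standalone identity $w_{i_k}^\top P_\gamma = R_\gamma w_{i_1}^\top$ proved by induction from one end of the product, whereas the paper carries out the same chain in a single display starting from the wrap-around adjacency $r_{\gamma_k,\gamma_1}$.
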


\begin{proof}
Let $\Delta_j$ be an arbitrary node {\it adjacent} to $\Delta_i$, and 
$\gamma=\gamma_1\cdots \gamma_{k}$ be a finite walk in $D_G$, with $\gamma_1=\Delta_i$ and $\gamma_k=\Delta_j$.  By adding a node $\Delta_i$ to the end of $\gamma$, one obtains the closed walk $\gamma':=\gamma \vee \Delta_i$. We show that the statement holds for both $\gamma$ and $\gamma'$.

For $\gamma$, we repeatedly apply Prop.~\ref{prop:wjAiwi} to obtain that
\begin{equation}\label{eq:rationonclosedwalk}
\begin{aligned}
w_i^\top P_{\gamma}&=w_i^\top A_{\gamma_k} \cdots A_{\gamma_1} \\
&=r_{\gamma_k, \gamma_1} w_{\gamma_k} A_{\gamma_{k-1}}\cdots A_{\gamma_1}\\
&\, \,\, \vdots \\
& = r_{\gamma_k,\gamma_1}r_{\gamma_{k-1},\gamma_{k}}\cdots r_{\gamma_{1},\gamma_2} w_i^\top \\
& = R_{\gamma'} w_i^\top.
\end{aligned}
\end{equation}
Because $\gamma'$ is a closed walk, $R_{\gamma'} = 1$ by Prop.~\ref{prop:independenceofpath}. 

Next, for $\gamma'$, we note that 
by Prop.~\ref{prop:wiAiwi}, $w_i^\top A_{\gamma_1} =w_i^\top A_i= w_i^\top$, so $$w_i^\top P_{\gamma'} = w_i^\top A_{\gamma_1} A_{\gamma_k} \cdots A_{\gamma_1} = w_i^\top A_{\gamma_k} \cdots A_{\gamma_1} = 
w_i^\top P_{\gamma}.$$
It is shown above that $w_i^\top P_{\gamma} = w_i^\top$. This  completes the proof. 
\end{proof}

\subsection{Exhaustive walks and contraction property}

We develop in this section the necessary tools to show that the limit $P_\gamma$ exists when $\gamma$ is an infinite exhaustive walk in $D_G$. 

\paragraph{Contraction property.} We start by introducing the following semi-norm~\cite{wolfowitz1963products}: Let  $A = [a_{ij}]$ be an arbitrary $n\times m$ matrix. We set
\begin{equation}\label{eq:defsnorm}
\|A\|_S:= \max_{1\le j \le m}\max_{1\le i_1,i_2 \le n} |a_{i_1 j} - a_{i_2 j}|
\end{equation}
It is known~\cite[Theorem 1]{chatterjee1977towards} that if the matrices in the product $P_\gamma$ are stochastic matrices and if 
$\lim_{t\to\infty} \|P_{\gamma}(t:0)\|_S=0$,  then there exists a probability vector   $w$ so that  $\lim_{t\to\infty}P_{\gamma}(t:0) = \bfo_nw^\top$.

We introduce below another known fact: 
\begin{Lemma}\label{lem:ABeps}
Let $A\in \R^{n\times n}$ be a stochastic matrix. 
Suppose that $A$ has a positive column $z$ with $\min z \ge \epsilon > 0$; then, for any matrix $B\in \R^{n\times m}$, 
$$\|AB\|_S < (1 - \epsilon) \|B\|_S.$$  
\end{Lemma}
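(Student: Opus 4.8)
The plan is to prove the contraction estimate $\|AB\|_S < (1-\epsilon)\|B\|_S$ by a direct entrywise computation, exploiting that $A$ is row-stochastic and has a uniformly positive column. First I would fix a column index $j$ of the product $AB$ and two row indices $i_1, i_2$, and write the difference of the corresponding entries as
$$
(AB)_{i_1 j} - (AB)_{i_2 j} = \sum_{k=1}^n (a_{i_1 k} - a_{i_2 k}) b_{kj}.
$$
The standard trick is that since both rows of $A$ sum to $1$, we may subtract any constant (in $k$) multiple of $b_{\cdot j}$; concretely, choose that constant to be either $\min_k b_{kj}$ or $\max_k b_{kj}$ so that the shifted vector $b_{kj} - c$ has one sign. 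Splitting $\sum_k (a_{i_1 k} - a_{i_2 k})(b_{kj} - c)$ into the terms where $a_{i_1 k} - a_{i_2 k}$ is positive versus negative, and using that the positive parts of $(a_{i_1 k} - a_{i_2 k})$ sum to the same value $s$ as the absolute values of the negative parts (again because the rows sum to $1$), one gets a bound of the form $s \cdot (\max_k b_{kj} - \min_k b_{kj}) \le s \|B\|_S$.

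The role of the positive column $z$ is to force $s \le 1 - \epsilon$ strictly. Say $z$ is the $\ell$-th column, so $a_{i_1 \ell} \ge \epsilon$ and $a_{i_2 \ell} \ge \epsilon$. Since $s = \sum_{k : a_{i_1 k} > a_{i_2 k}} (a_{i_1 k} - a_{i_2 k}) \le \sum_{k \ne \ell} a_{i_1 k} = 1 - a_{i_1 \ell} \le 1 - \epsilon$ — here I would be slightly careful: the index $\ell$ contributes $a_{i_1 \ell} - a_{i_2 \ell}$ to $s$ only if it is positive, and in any case this contribution is at most $a_{i_1\ell} - \epsilon$, so combining with the other terms $\sum_{k \ne \ell}(a_{i_1 k} - a_{i_2 k})_+ \le \sum_{k\ne\ell} a_{i_1 k} = 1 - a_{i_1\ell}$ gives $s \le 1 - a_{i_1\ell} + (a_{i_1\ell} - \epsilon) = 1 - \epsilon$. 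Taking the maximum over $j$, $i_1$, $i_2$ yields $\|AB\|_S \le (1-\epsilon)\|B\|_S$. To get the strict inequality as stated, I would note that if $\|B\|_S = 0$ the claim is trivial (both sides zero — actually then one needs $0 < 0$ to fail, so presumably the lemma is implicitly applied with $\|B\|_S > 0$, or one should read it as $\le$); assuming $\|B\|_S > 0$, strictness follows because the bound $s(\max - \min) \le (1-\epsilon)\|B\|_S$ combined with $\max_k b_{kj} - \min_k b_{kj} \le \|B\|_S$ and $s \le 1-\epsilon$ is already $\le$, and one extracts strictness from the fact that $\epsilon > 0$ is a strict lower bound or by a more careful accounting; the cleanest route is to observe the paper likely only needs $\|AB\|_S \le (1-\epsilon)\|B\|_S$ with $\epsilon$ a fixed positive constant, iterated over infinitely many factors, which already forces the product seminorm to zero.

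The main obstacle, such as it is, is bookkeeping rather than conceptual: correctly handling the index $\ell$ (the positive column) inside the sign-splitting argument so that its guaranteed mass $\epsilon$ is genuinely subtracted from the "transport mass" $s$, and being honest about whether the conclusion is a strict or non-strict inequality when $\|B\|_S$ could be zero. I would structure the write-up as: (1) reduce to a fixed column $j$ and rows $i_1, i_2$; (2) the constant-shift identity using row-stochasticity; (3) the positive/negative split and the bound by $s \cdot (\max_k b_{kj} - \min_k b_{kj})$; (4) the estimate $s \le 1 - \epsilon$ using the positive column; (5) take suprema. This is a classical Dobrushin–ergodic-coefficient style argument, so I expect no real difficulty, only care with the edge case.
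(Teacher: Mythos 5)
Your argument is the standard Dobrushin/Hajnal coefficient-of-ergodicity computation, which is precisely what the paper relies on: it gives no proof of its own and simply cites Lemma~3 of Hajnal (1958), remarking only that stochasticity of $B$ is not needed, and your sign-splitting bound $s\le 1-\epsilon$ via the positive column is exactly that argument. Your side remark is also well taken — only the non-strict inequality $\|AB\|_S \le (1-\epsilon)\|B\|_S$ can be guaranteed (it fails strictly when $\|B\|_S=0$, and equality is attainable even when $\|B\|_S>0$), but the non-strict form is all that is used in Sec.~\ref{ssec:mainproof}.
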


We do not provide a proof here, but refer to the proof of a similar statement in~\cite[Lemma 3]{hajnal1958weak}.  There, the author also assumes that $B$ is a stochastic matrix. However, the proof provided  does not rely on this assumption. 

\paragraph{Exhaustive walks and products with positive columns.} 
We show  that if $G$ is a TLG  with derived graph $D_G$ and if $\gamma$ is a finite exhaustive walk in $D_G$, then $P_\gamma$ has a positive column. For that, we first have the following fact:

\begin{Lemma}\label{lem:increasingminz}
Let $A$  be a stochastic matrix and $z$ be a nonnegative vector. Then, $\min(Az) \ge \min z$.
\end{Lemma}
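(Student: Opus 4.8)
The plan is to prove $\min(Az) \ge \min z$ directly from the fact that $A$ is row-stochastic, that is, each row of $A$ has nonnegative entries summing to $1$. This is a completely elementary calculation: each entry of $Az$ is a convex combination of the entries of $z$, and a convex combination of a finite set of real numbers is never smaller than the smallest of them.

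Concretely, write $A = [a_{ik}]$ and $z = (z_1, \dots, z_n)^\top$, and let $m := \min z = \min_{1 \le k \le n} z_k$. For any row index $i$, the $i$th entry of $Az$ is $(Az)_i = \sum_{k=1}^n a_{ik} z_k$. Since $a_{ik} \ge 0$ for all $k$ and $z_k \ge m$ for all $k$, we get $\sum_k a_{ik} z_k \ge \sum_k a_{ik} m = m \sum_k a_{ik} = m$, where the last equality uses that $A$ is stochastic so its rows sum to $1$. Since this holds for every $i$, taking the minimum over $i$ gives $\min(Az) \ge m = \min z$, as claimed.

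There is essentially no obstacle here; the only things to be careful about are that the statement genuinely uses only row-stochasticity (not double stochasticity or positivity) and that $z$ is required to be nonnegative only insofar as it makes $\min z$ the relevant quantity — in fact the inequality $\min(Az) \ge \min z$ holds for an arbitrary real vector $z$, and nonnegativity of $z$ is not actually needed for the argument, though it is the case of interest in the sequel (where $z$ is built up as a column of a product of stochastic matrices). I would present the one-line convex-combination estimate and conclude. This lemma will presumably be iterated along an exhaustive walk, together with Lemma~\ref{lem:ABeps}, to show that some column of $P_\gamma$ stays bounded below by a positive constant and hence that $\|P_\gamma(t:0)\|_S \to 0$.
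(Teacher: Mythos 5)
Your proof is correct and is essentially identical to the paper's: both observe that each entry of $Az$ is a convex combination of the entries of $z$ and hence at least $\min z$. Your added remark that nonnegativity of $z$ is not actually needed is accurate but does not change the argument.
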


\begin{proof}
Since $A$ is row stochastic, every entry of the vector $Az$ is a convex combination of the entries of $z$, and thus larger than $\min z$.
\end{proof}

We now establish the following result, as announced above:

\begin{Proposition}\label{prop:positivecolumn}
There is an $\epsilon\in (0,1)$ such that for any finite, exhaustive walk $\gamma$ in $D_G$, the stochastic matrix $P_\gamma$ has a positive column $z$ with $\min z \ge \epsilon$. 
\end{Proposition}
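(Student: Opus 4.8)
The plan is to leverage the structure of a TLG via its RHC decomposition, tracking a single column of the evolving product and showing that it becomes and stays uniformly positive once the walk has visited ``enough'' triangles. Concretely, I would argue by induction on the number of nodes $n$ of $G$, using Theorem~\ref{th:equihennebergrtl} to peel off the last triangle $\Delta_{n-2} = \{v_l, v_m, v_n\}$ added by an RHC, with $v_n$ the new node of degree~$2$. The base case $n=3$ is immediate: any exhaustive walk contains $\Delta_1$, and $A_1 = \mathbf{1}_3 a_1^\top$ is already rank one with all columns positive precisely when $a_1$ is positive; but since $a_1$ need only be a probability vector, we instead observe that after applying $A_1$ at least once the matrix is $\mathbf{1}_3 a_1^\top$, whose nonzero columns are bounded below by $\min\{a_{1,j} : a_{1,j} > 0\}$ — and a cleaner route for the base case and in general is to note $P_\gamma$ itself has a column equal to some $a_{\Delta}$-weighted expression; I will instead phrase the whole argument through Lemma~\ref{lem:increasingminz} and the local stochastic matrices directly.

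\textbf{Key steps.} First, fix $G$ on $n$ nodes and let $G'$ be the TLG on $n-1$ nodes obtained by deleting $v_n$; every triangle of $G'$ is a triangle of $G$, and $D_{G'}$ is an induced subgraph of $D_G$. Given a finite exhaustive walk $\gamma$ in $D_G$, let $\gamma''$ be the subwalk of $\gamma$ obtained by deleting every occurrence of $\Delta_{n-2}$ and every maximal excursion that leaves $D_{G'}$ — but since $v_n$ has degree $2$, the only triangle of $G$ not in $G'$ is $\Delta_{n-2}$, so in fact $D_G = D_{G'} \cup \{\Delta_{n-2}\}$ with $\Delta_{n-2}$ adjacent only to those triangles of $G'$ containing the edge $(v_l,v_m)$. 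Thus $\gamma$ restricted to the times it sits in $D_{G'}$ is (after concatenating across the deleted $\Delta_{n-2}$-visits) a walk in $D_{G'}$, and it is exhaustive in $D_{G'}$ because $\gamma$ visits every node of $D_G \supseteq D_{G'}$. Second, apply the induction hypothesis: there is $\epsilon' \in (0,1)$, depending only on $G'$ and the weights, such that the product along any finite exhaustive walk of $D_{G'}$ has a column $z'$ with $\min z' \ge \epsilon'$. Third, split $\gamma$ at the \emph{last} time it visits $\Delta_{n-2}$, writing $P_\gamma = P_{\gamma_{\text{post}}} A_{n-2} P_{\gamma_{\text{pre}}}$; argue that $\gamma_{\text{pre}}$ up to that point, restricted to $D_{G'}$, is already exhaustive in $D_{G'}$ (since $\gamma$ visits every $D_{G'}$-node, and we can instead split at the last visit to $\Delta_{n-2}$ that occurs after all $D_{G'}$-nodes have been visited — more carefully, let $t_0$ be the first time after which $\gamma$ has visited every node of $D_G$, and let the relevant prefix end at $t_0$; its $D_{G'}$-restriction is exhaustive). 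Apply the induction hypothesis to the $D_{G'}$-restriction of this prefix — here one needs that deleting the $\Delta_{n-2}$-excursions from a product only helps, which follows because each $A_{n-2}$ is stochastic and by Lemma~\ref{lem:increasingminz} multiplying by a stochastic matrix does not decrease $\min$ of a column; so the full prefix product $P_{\gamma_{\text{pre}}}$ still has a column $z$ with $\min z \ge \epsilon'$. Fourth, handle the new coordinate $v_n$: after the last application of $A_{n-2}$, row $v_n$ of the running product becomes the convex combination $a_{n-2,l}(\text{row } v_l) + a_{n-2,m}(\text{row } v_m) + a_{n-2,n}(\text{row } v_n)$ of rows of $P_{\gamma_{\text{pre}}}$; and rows $v_l, v_m$ at that stage already have their $z$-entry $\ge \epsilon'$, so by Assumption~\ref{ass:weightnonzero} (the edge $(v_l, v_m)$ is nonsimple in $G$, being shared by $\Delta_{n-2}$ and some $\Delta_j \subseteq G'$) we get $a_{n-2,l} + a_{n-2,m} > 0$, hence the $v_n$-entry of column $z$ after $A_{n-2}$ is $\ge (a_{n-2,l}+a_{n-2,m})\epsilon' > 0$. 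Finally, all subsequent matrices in $P_{\gamma_{\text{post}}}$ are stochastic, so by Lemma~\ref{lem:increasingminz} the minimum of that column never decreases; take $\epsilon := \min\{\epsilon', (\min_{\Delta_{n-2} \text{ nonsimple edge}} (a_{n-2,l}+a_{n-2,m}))\,\epsilon'\}$, which depends only on $G$ and the fixed weights, not on $\gamma$.

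\textbf{Main obstacle.} The delicate point is the bookkeeping in steps three and four: one must choose the split time $t_0$ so that (a) the prefix, restricted to the times it lies in $D_{G'}$, is genuinely exhaustive for $D_{G'}$, and (b) the last $A_{n-2}$ factor actually occurs after that, so that the positivity it injects into row $v_n$ is not subsequently destroyed — it cannot be destroyed, since later factors are stochastic, but one must make sure a column that is positive in the $G'$-coordinates is \emph{the same} column that $A_{n-2}$ then fills in the $v_n$-coordinate. Because $A_{n-2}$ acts as the identity on all coordinates except $v_l, v_m, v_n$, and it sets row $v_n$ equal to the $a_{n-2}$-combination of rows $v_l, v_m, v_n$, the column $z$ witnessing positivity on $G'$ automatically acquires a positive $v_n$-entry, so this meshes correctly; the only real care needed is that $D_{G'}$-exhaustiveness of the prefix holds, which is where one uses that $v_n$ has degree exactly $2$ so that $D_G \setminus D_{G'} = \{\Delta_{n-2}\}$ is a single vertex whose removal (together with re-splicing the walk) yields a walk in $D_{G'}$ visiting all its nodes. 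An alternative, avoiding the split entirely, is to prove the stronger statement that for \emph{some} column index $j$, $\min$ of column $j$ of the running product is nondecreasing in time and becomes $\ge \epsilon$ as soon as the walk has visited every triangle containing $v_j$ for a suitable ``deep'' vertex $v_j$ — but the inductive formulation above is the most economical.
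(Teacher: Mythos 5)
Your high-level intuition---a newly covered vertex inherits positivity from the two old vertices of its attachment edge via Assumption~\ref{ass:weightnonzero}, and stochasticity then preserves it---is the right one, and the ``alternative'' you sketch at the end (tracking when the walk has visited every triangle containing a given vertex) is essentially what the paper does: it inducts along the walk, on the successive times at which a new vertex of $G$ is first covered, rather than on the graph. But your main argument has a genuine gap at step three. You apply the induction hypothesis to the walk obtained by splicing out the visits to $\Delta_{n-2}$, and then assert that the true prefix product still has a column with minimum $\ge \epsilon'$ ``because each $A_{n-2}$ is stochastic and multiplying by a stochastic matrix does not decrease the min of a column.'' This does not follow. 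Lemma~\ref{lem:increasingminz} bounds $\min(Az)$ for a fixed nonnegative vector $z$; it says nothing about inserting a factor into the \emph{interior} of a product. Concretely, if the prefix product is $\cdots B_1 A_{n-2} B_0$ while the spliced product is $\cdots B_1 B_0$, these do not agree on the $G'$-coordinates: $A_{n-2}$ restricted to the rows and columns of $G'$ is only \emph{sub}stochastic (rows $v_l,v_m$ leak mass $a_{n-2,n}$ into column $v_n$), so after the first occurrence of $A_{n-2}$ the running product is no longer of the form $\diag[\,\cdot\,,1]$ and its $G'$-block is not the product of the $(n-1)$-dimensional local matrices along the spliced walk; the inductive bound therefore does not transfer. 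Applying Lemma~\ref{lem:increasingminz} to the full $n$-dimensional column instead bottoms out at $\min=0$, since the $v_n$-entry of that column is $0$ before $A_{n-2}$ has acted.

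The split-time bookkeeping in steps three and four also fails in a concrete scenario: an exhaustive walk may visit $\Delta_{n-2}$ only once, at the very start, e.g.\ $\gamma=\Delta_{n-2}\gamma'$ with $\gamma'$ exhaustive in $D_{G'}$. Then $P_\gamma=\diag[Q,1]\,A_{n-2}$, row $v_n$ of $P_\gamma$ equals row $v_n$ of $A_{n-2}$, so the only candidate positive columns are $l,m,n$, and each of these equals a positive multiple of $[\,Qe_l+Qe_m;\,1\,]$. Establishing positivity thus requires knowing that columns $l$ and $m$ of $Q$ \emph{sum} to a positive vector---strictly more than the inductive hypothesis ``$Q$ has some positive column''---so the induction on $G$ does not close as stated and would need a strengthened hypothesis. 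The paper's proof avoids both issues by ordering the induction by the walk itself: up to the time a new vertex of $G$ is first covered, every factor applied so far is block-diagonal on the already-covered coordinates, the tracked column's minimum over those coordinates degrades by at most a factor $\underline{a}$ exactly once per newly covered vertex (hence at most $n-2$ times, independently of $|\gamma|$), and Lemma~\ref{lem:increasingminz} applies verbatim in between.
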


\begin{proof}
Let $\gamma$ be an arbitrary finite, exhaustive walk. Let $\mu_p$ be the number of distinct nodes  in $
\gamma_1\cdots \gamma_p$. Then $$1 = \mu_1 \le \mu_2 \le \cdots \le \mu_{|\gamma|} = n - 2.$$ 

Because $\mu_p$ is an integer-valued increasing sequence and because 
$\mu_{p+1}-\mu_p \leq 1$ by construction, there exist $(n-2)$ time steps $ t_1<t_2 < \cdots < t_{n-2} := |\gamma|$ with the property that 
$\mu_{t_k + 1}-\mu_{t_{k}}=1$, for $1 \leq k \leq n-3$. Note that $t_1$ has to be $1$ since $D_G$ is simple and $\gamma$ is a walk in $D_G$, 

For ease of analysis, but without loss of generality, we label the nodes of $G$ such that the triangles $\gamma_1,\ldots,\gamma_{t_k}$, for all $k = 1,\ldots,(n-2)$, cover nodes $v_1,\ldots, v_{k+2}$. 
Then, by the definition~\eqref{eq:defAi} of $A_i$, the matrix 
\begin{equation}\label{eq:defP1Pkprod}
P_{\gamma}(t_{k}:0)= A_{\gamma_{t_{k}}} \cdots A_{\gamma_1}=:\operatorname{diag}[Q_k , I], 
\end{equation}
for $k = 1,\ldots, n-2$, 
is block diagonal with $Q_k$ being an $(k+2)\times (k+2)$ stochastic matrix and $I$ being the identity matrix of dimension $(n - k-2)\times (n - k - 2)$.

We show below that for every $k = 1,\ldots, n-2$, there exists an $\epsilon_k > 0$, independent of $\gamma$, such that $Q_k$ has a column $z_k$ with $\min z_k \ge \epsilon_k$. 
The proof is carried out by induction on $k$.  
To proceed, we first define the minimum {\it non-zero} entry over all local weight vectors of triangles: 
\begin{equation}\label{eq:defunderass}
\underline{a}:= \min \left \{ a_{\Delta, v_i}  \mid a_{\Delta, v_i} \neq 0,  v_i \in \Delta, \Delta \in D_G \right \}.
\end{equation}
It should be clear that $\underline{a} \in (0, 1]$.

For the base case $k=1$,  we have that $t_1 = 1$ and $P_\gamma(t_1:0) = A_{\gamma_{1}} =\diag[Q_1, I]$. It follows that $Q_1=(\bfo_3 a_1^\top)=\bfo_3 a_1^\top$, where $a_1 \in \R^3$ is the local weight vector of triangle $\Delta_1$. 
It should be clear that $Q_1$ has a positive column, which we denote by $z_1$. Setting $\epsilon_1 := \underline{a}$, we obtain that $\min z_1 \geq \epsilon_1.$

For the inductive step, we assume that $\epsilon_{k-1}$ exists and prove the existence of $\epsilon_{k}$. 
By~\eqref{eq:defP1Pkprod}, 
we have that $$P_\gamma(t_{k-1}:0) = \diag[Q_{k-1}, I],$$ where $\dim Q_{k-1}$ is $(k + 1)\times (k + 1)$. 
Now, consider the sub-walk $\gamma_1\cdots \gamma_{t_{k}}$ and the corresponding product $P_\gamma(t_{k}:0)$. We can express $P_\gamma(t_{k-1} + 1:0) = \diag[Q'_{k }, I]$, where $Q'_{k}$ is of dimension $(k+2)\times (k + 2)$. 

By an earlier assumption, triangle $\gamma_{t_{k-1} +1}$ contains the node $v_{k + 2}$. 
Moreover, by relabeling nodes $v_1,\ldots,v_{k+1}$, we can arrange matters so that $\gamma_{t_{k-1} + 1}$ has nodes $\{v_{k},v_{k+1},v_{k+2}\}$. 
Denote by $\tilde Q_{k-1}$ the matrix $Q_{k-1}$ after the relabeling, then  $\tilde Q_{k-1}$ is obtained by a permutation of rows/columns of $Q_{k-1}$. It should be  clear that if $Q_{k-1}$ has a column $z_{k-1}$ such that $\min z_{k-1} \ge \epsilon$ for some $\epsilon > 0$, then so does $\tilde Q_{k-1}$. For ease of notation, we will still write $Q_{k-1}$ instead of $\tilde Q_{k-1}$.    
Using this labeling,
we can compute $Q'_{k}$ from $Q_{k-1}$ via the following expression:
$$
Q'_{k} = \diag[I, \bfo_3 a^\top] \diag[Q_{k-1}, 1]. 
$$
where $a\in \R^3$ is the local weight vector corresponding to the triangle $\gamma_{t_{k-1} + 1}$ (the sub-index of $a$ has been omitted for simplicity). 

By the induction hypothesis, there is a positive column $z_{k-1}$ of $Q_{k-1}$ such that $\min z_{k-1} \ge \epsilon_{k-1}$.  
Then, $[z_{k-1}; 0]$ is a column of $\diag[Q_{k-1}, 1]$. Let $z_{{k-1},j}$ (resp. $a_j$) be the $j$th entry of $z_{k-1}$ (resp. $a$). We compute below the column vector $z'_{k }:=\diag[I, \mathbf{1}_3 a^\top] [z_{k-1}; 0]$: 
\begin{equation}\label{eq:defzkp1}z'_{k,j} =\left\lbrace \begin{array}{ll}
     z_{{k-1},j}& \mbox{ for } 1 \leq j \leq k - 1,  \\
     a_1 z_{k-1,k}+a_2 z_{{k-1},k+1}& \mbox{ for } k \leq j\leq k+2. 
\end{array}
\right.
\end{equation}
From Assumption~\ref{ass:weightnonzero}, $a_{1}$ and $a_2$ cannot both be zero, so  $Q'_{k}$ has a positive column $z'_{k }$. Moreover, because $\min z_{k-1} \ge \epsilon_{k-1}$ and $a_1, a_2 \ge \underline{a}$ (note that $\underline{a} < 1$), 
$$
\min z'_{k} \ge  \underline{a} \epsilon_{k-1} = : \epsilon'_{k} \in (0, 1). 
$$
Note that $\epsilon'_{k}$ is independent of $\gamma$ and, hence, a uniform lower bound.

By the definition of $t_{k}$,  the sub-walk $\gamma_1 \cdots \gamma_{t_{k}}$ covers the same set of nodes as the sub-walk $\gamma_1 \cdots \gamma_{t_{k-1} + 1}$ does. In particular, using Eq.~\eqref{eq:defP1Pkprod}, the  matrix $P_\gamma(t_{k} :0)$ takes the form $\diag[Q_{k}, I]$ with $Q_{k}$ and $Q'_{k}$ of the same dimension. Moreover, $Q_{k}$ can be expressed as $Q_{k} = S_{k} Q'_{k}$ where $S_{k}$ is the  $(k+2)\times (k+2)$ leading principal submatrix of $P_\gamma(t_{k}: t_{k-1} + 1)$. Note that $S_{k}$ is a stochastic matrix. By Lemma~\ref{lem:increasingminz}, $\min (S_{k} z'_{k}) \ge \min z'_{k}$, which implies that $Q_{k}$ has a positive column $z_{k}$ and, moreover, $\min z_{k} \ge \epsilon_{k}:=\epsilon'_{k}$.
\end{proof}

\begin{Remark}\normalfont
A close inspection of the above arguments yields that  $\epsilon$ can be chosen to be $\epsilon = \underline{a}^{n - 2}$.   
\end{Remark}

\subsection{Proofs of Main Theorems}\label{ssec:mainproof}
We now put the above result together to prove Theorems~\ref{th:main1} and~\ref{th:characwi}.

Let $\gamma$ be an infinite, exhaustive walk starting at node $\Delta_i$. Let $t_k$, for $k\ge 0$, be a monotonically increasing sequence of time steps 
at which the walk first revisits $\Delta_i$ after having visited all other nodes since $t_{k-1}$, i.e., $\gamma_{t_k}=\Delta_i$ and $\gamma_{t_k}\cdots\gamma_{t_{k + 1} - 1}$ is a finite, exhaustive walk. We set $t_0:=1$. By assumption, $(n - 2) \leq t_{k+1}-t_k< \infty$. For convenience, we introduce $s_k:= t_k - 1$, so $s_0 = 0$.  

From  Lemma~\ref{lem:ABeps}, the semi-norm $\|P_{\gamma}(s:0)\|_S$ is {\it non-increasing} in $s$ and since it is obviously lower-bounded, it converges to a limit. We now show that this limit is $0$. 
To this end, we use  Prop.~\ref{prop:positivecolumn} to obtain an $\epsilon\in (0,1)$ such that for each $k \geq 0$, $P_\gamma(s_{k+1}:s_k )$ has a positive column $z_k$ with $\min z_k \geq \epsilon$. Then, by repeatedly applying Lemma~\ref{lem:ABeps}, we have that
\begin{multline}
\| P_\gamma(s_{k}:s_0) \|_S = \|P_\gamma(s_{k}:s_{k-1}) P_\gamma(s_{k-1} :s_0) \|_S\\ \le (1 - \epsilon) \|P_\gamma(s_{k-1}:s_0 )\|_S \le (1- \epsilon)^{k}.
\end{multline}
It then follows that 
$$\|P_\gamma\|_S = \lim_{k\to \infty} \|P_\gamma(s_k :s_0)\| = \lim_{k\to \infty} (1 - \epsilon)^k = 0,$$ as claimed. 
Using~\cite[Theorem 1]{chatterjee1977towards}, we conclude that $P_\gamma$ converges to a rank-one stochastic matrix.

Let $\hat w$ be such that $P_\gamma=\bfo \hat w^\top$.  
We next show that $\hat w =  \overline w_i$ as is in the statement of Theorem~\ref{th:characwi}. On the one hand, the convergence of $P_\gamma(s_k:s_0)$ to $P_\gamma$ implies the convergence of the row vectors $w_i^\top P_\gamma( s_k:s_0)$ to the row vector $w_i^\top P_\gamma = w_i^\top \bfo \hat w^\top = \hat w^\top$.  
On the other hand, each finite walk $\gamma_{t_0}\cdots \gamma_{t_{k} - 1}$ starts with $\gamma_i$ and ends with a node {\it adjacent} to $\gamma_i$. Thus, by Corollary~\ref{cor:closedPratio1}, $ w_i^\top P_\gamma(s_{k}:s_0)  = w_i^\top$. Combining the above arguments, we have that 
$$
w_i^\top = \lim_{k\to \infty} w_i^\top P_\gamma(s_{k}:s_0) = w_i^\top P_\gamma = \hat w^\top.
$$
This completes the proofs of both Theorems~\ref{th:main1} and~\ref{th:characwi}. 
\hfill\qed

\section{Numerical studies}\label{sec:numerical}
We present here simulation results showing the validity of the main theorems and the importance of the adjacency rule and the condition that the underlying graph is a TLG. 
Precisely, we present three sets of experiments. In the first one, we show that if $G$ is not a TLG, but simply a triangulated graph, then the conclusions do not hold. 
In the second set, we explore the importance of the structure of the local stochastic matrices. 
In the last set, we show that the 
adjacency rule for the product, i.e., that $\gamma$ is a walk in $D_G$, is critical as well.

\paragraph{Experiment 1: On triangulated Laman graphs.}
We consider in Fig.~\ref{sfig:5pi} a triangulated graph with $5$ triangles. The derived graph is a cycle of length $5$. The graph is not Laman because it violates the Laman condition, but it is rigid.
To each triangle, we assign a local weight vector, which yields the associated local stochastic matrix. These local weight vectors are {\em i.i.d} random variables uniformly drawn from $\sp(2)$. 

We then sample $N_r=50\cdot 10^3$ random walks $\gamma$ in $D_G$. The cardinality of each walk is $N=10^4$. This length was sufficient to guarantee converge of the product $P_{\gamma}$ to a rank-one matrix, as was observed in the simulation (we took the absolute values of the eigenvalues of the products and verified that there was only one nonzero value, namely value one).

We denote by $w$ the left eigenvector of $P_{\gamma}$ corresponding to eigenvalue $1$, i.e., $P_{\gamma}=\bfo (w)^\top$. Then, $w$ is a random variable taking value in $\sp(5)$.  
We plot in Fig.~\ref{fig:histnotlg} the  histograms for the 6 entries of $w$. We observe that the support of these empirical distributions has non-zero measure, indicating that there is a continuum of limits, associated to the chosen set of local weight vectors.

\begin{figure}[h]
\centering
\subfloat[\label{sfig:5pi}]{
    \includegraphics{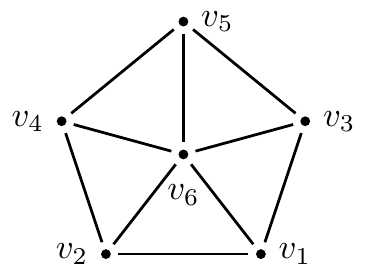}

}
\qquad
\subfloat[\label{sfig:4line}]{
    \includegraphics{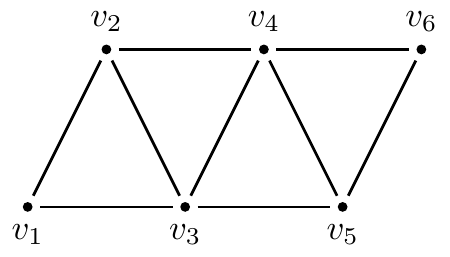}

}
\caption{The graph in Fig.~\ref{sfig:5pi} is a triangulated rigid graph, but is not Laman.  The graph in Fig.~\ref{sfig:4line} is a TLG.
}
\end{figure}

\begin{figure}[h]
    \centering

\includegraphics[width=1\columnwidth]{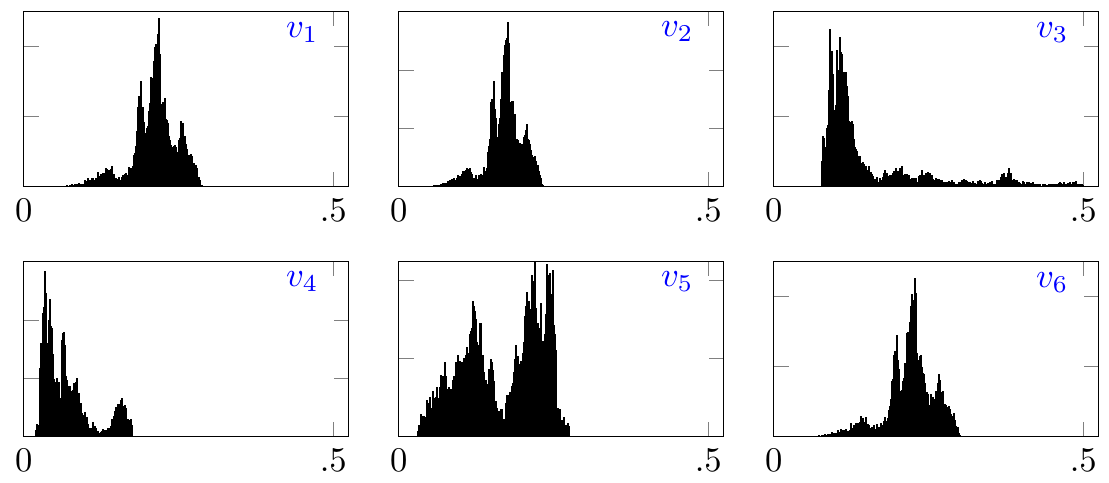}
\caption{For random walks $\gamma$, with $|\gamma| = 10^4$, in the derived graph of Fig.~\ref{sfig:5pi}, we evaluate the corresponding left eigenvector $w$ of $P_{\gamma}$. We plot the empirical distribution for each entry of $w$. } \label{fig:histnotlg}
\end{figure}

\paragraph{Experiment 2: On local stochastic matrices.}

We consider in Fig.~\ref{sfig:4line} a TLG with $4$ triangles whose  derived  graph is line graph.
To each triangle $\Delta_i$, we assign a random $3\times 3$ stochastic matrix, realized as the principal submatrix of $A_i$ corresponding to the nodes of $\Delta_i$. 
All of the row vectors of these $3\times 3$ matrices are   
{\em i.i.d} random variables uniformly drawn from $\sp(2)$.  
This construction of local weight matrices violates our assumption on the $A_i$'s, which requires these principal submatrices to have identical rows. 

Similarly, we sample $N_r=50\cdot 10^3$ random walks $\gamma$ of cardinality $N=10^4$ in $D_G$ (for which we observe convergence of every $P_\gamma$) and plot the empirical distribution of the entries of the limiting left-eigenvector of $P_\gamma$.  We again observe that the support of these empirical distributions have non-zero measures, indicating that there is a continuum of limits.

\begin{figure}[h]
    \centering
\includegraphics[width=1\columnwidth]{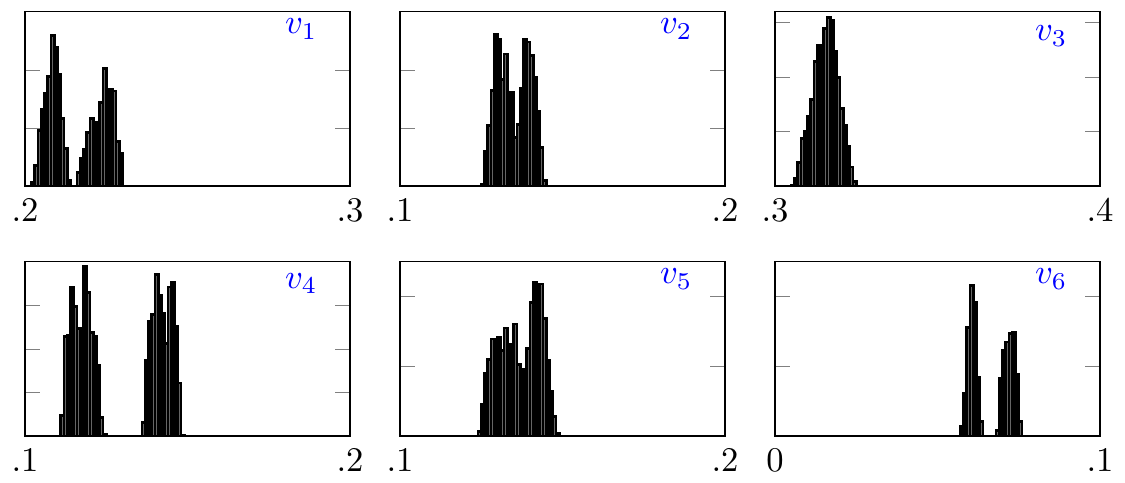}
    \caption{For random walks $\gamma$, with $|\gamma| = 10^4$, in the derived graph of Fig.~\ref{sfig:4line}, we evaluate the corresponding left eigenvector $w$ of $P_{\gamma}$. We plot the empirical distribution for each entry of $w$. }\label{fig:histnolsm}
\end{figure}

\paragraph{Experiment 3: On adjacency rules.}
In this last set of experiments, we verify that if the assumptions are met, $P_\gamma$ converges to a rank-one matrix whose value does not depend on the walk $\gamma$ in $D_G$. We also verify that if all assumptions are met but $\gamma$ is a random sequence of triangles, and thus does not respect the adjacency rules afforded by $D_G$, the conclusions do not hold.
The simulations shown in Fig.~\ref{fig:histhorse} are based on a larger TLG $G$ with 18 nodes, depicted in Fig.~\ref{sfig:horseG}, with derived graph $D_G$ on $16$ nodes shown in Fig.~\ref{sfig:horseDG}.  To each triangle, we assign a local weight vector, which yields the associated local stochastic matrix. These local weight vectors are {\em i.i.d} random variables uniformly drawn from $\sp(2)$. 

We first sampled $N_r=50\cdot 10^3$ random walks $\gamma$ of cardinality $N=30 \cdot 10^3$ in $D_G$. Every walk starts at node $\Delta_1=\{v_1,v_2,v_3\}$. 
We observe convergence of every $P_\gamma$ to a common rank-one matrix. This provides numerical support of Theorem~\ref{th:main1}. 
Denote by $w$ the common left-eigenvector of $P_\gamma$ corresponding to eigenvalue $1$, and $w_i$ its entry. 
We plot the delta functions in Fig.~\ref{fig:histhorse} in red at these $w_i$.

We then sampled $N_r=50\cdot 10^3$ random sequences $\gamma$ of cardinality $N=30 \cdot 10^3$ in $D_G$ (for which we observe again convergence of every $P_\gamma$ to a rank-one matrix). In this case, each element of the sequence is a randomly chosen node in $D_G$.  We plot in Fig.~\ref{fig:histhorse} in black the empirical distributions for the entries of the left-eigenvector of $P_\gamma$ corresponding to the eigenvalue 1. 
Again, observe that the support of these empirical distributions have non-zero measures, indicating that there is a continuum of limits.

\begin{figure}
    \centering
\subfloat[\label{sfig:horseG}]{
    \includegraphics{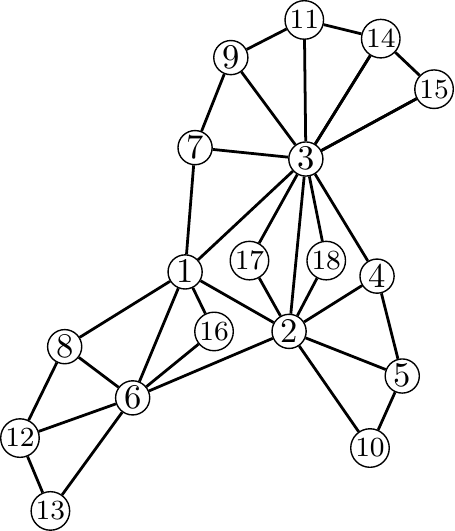}

} \qquad \quad
\subfloat[\label{sfig:horseDG}]{
\includegraphics{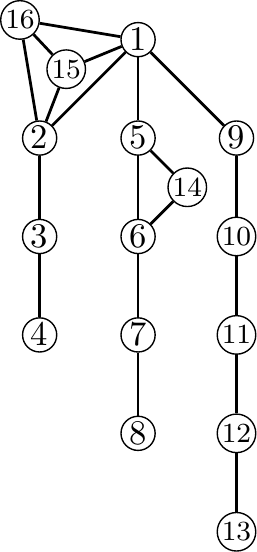}
}
\caption{The ``horse'' graph $G$ in Fig.~\ref{sfig:horseG} is a TLG on 18 nodes. We plot its derived graph $D_G$ in~Fig.~\ref{sfig:horseDG}. We provide here a few (but not all) correspondences between triangles in $G$ and nodes in $D_G$: $\Delta_1 = \{v_1,v_2,v_3\}$, $\Delta_2 = \{v_2,v_3,v_4\}$, $\Delta_5 = \{v_1,v_2,v_6\}$, and $\Delta_9 = \{v_1,v_3,v_7\}$.}
    
\end{figure}

\begin{figure}
   \centering
\includegraphics[width=1\columnwidth]{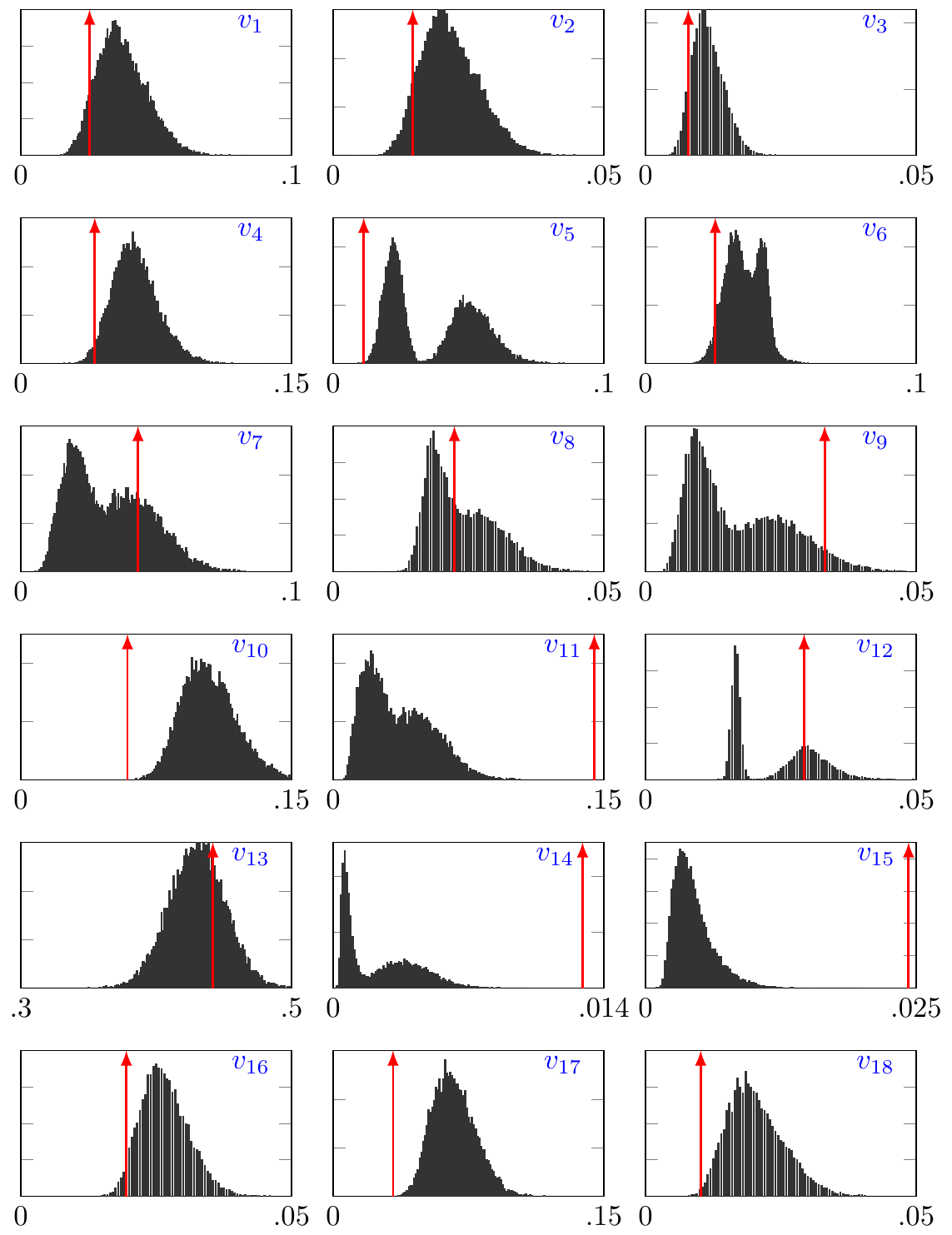}

\caption{For random sequences (not necessarily walks) $\gamma$, with $|\gamma| = 30\cdot 10^3$, of nodes in the derived graph shown in Fig.~\ref{sfig:horseDG}, we evaluate the corresponding left eigenvector $w$ of $P_{\gamma}$. We plot the empirical distribution for each entry of $w$. The delta functions in red are distributions that correspond to random walks $\gamma$ in $D_G$. The distributions in black correspond to sequences $\gamma$ of nodes in $D_G$ chosen uniformly at random.}\label{fig:histhorse}
\end{figure}

\section{Summary and outlook}\label{sec:conclusion}

We have shown how to construct sets of stochastic matrices (called local stochastic matrices) and adjacency rules for taking product of these matrices that guarantee, under some mild assumptions, convergence of the product to one out of a finite number of possible limits. These limits are all  rank-one matrices and, knowing the first matrix in the product is enough to determine which limit the product will converge to.

Underlying our work is the notion of triangulated Laman graph (TLG), where we recall that a Laman graph is a minimally rigid graph. The local stochastic matrices are in one-to-one correspondence with the triangles of this graph, and the adjacency rules for their product are encoded in the hereby defined derived graph of a TLG. 

The connections between the minimal rigidity of the underlying graph and the convergence of the product appear at several point in the proof: first, and foremost, in the construction of the unnormalized APVs $w_i$, where properties of the derived graph of a TLG are integral to the argument; second, in the proof of convergence itself. These connections are either direct, using characterizations of Laman graphs, or rely on the existence of a Restricted Henneberg Construction for the graphs, a fact we proved in the appendix and that requires the graph to be minimally rigid.

We have  provided simulations showing that departing from our assumptions---e.g.,  using a rigid, but non-minimally rigid, graph, or  changing the structure of the local stochastic matrices, or disrespecting the adjacency rules in the products--- would generically break the conclusions of the Theorems, in particular the conclusion about the number of possible limits of the product.

Beyond their application in the present paper, we believe that some of the novel ideas introduced can form the basis of a broader set of results. In particular, one of the key facts for proving the finite cardinality of the set of possible limits is Prop.~\ref{prop:wjAiwi}. There, we have established the relation $w^\top_j A_i = r_{i,j} w_i^\top$, where $i$ and $j$ correspond to {\it adjacent nodes} in the derived graph; indeed, the fact that the limits of the convergent products are exactly $\bfo\overline w^\top_i$ follow as a consequence of the proposition as shown in Sec.~\ref{ssec:mainproof}. 

The above relation motivates us to consider the following problem: Suppose that one is given a finite set of stochastic matrices $\{A_1,\ldots, A_k\}$ and a set of nonnegative vectors $\{w_1,\ldots, w_k\}$.
Define a directed graph $D$ on $k$ nodes as follows: there is an edge from node $i$ to node $j$ if $w^\top_j A_i \propto w_i$. Then, under what circumstances is the graph $D$  strongly connected? If it is, when is every $P_\gamma$  a rank-one matrix for infinite exhaustive walk $\gamma$ in $D$?  If we meet conditions so that the answer to the above questions are positive, then $P_\gamma$ is a rank-one matrix and if the product starts with matrix $A_i$, then the rank-one matrix has to be $\bfo \overline w_i^\top$, where $\overline w_i$ is again the normalized version of $w_i$. The associated sequence of absolute probability vectors take values $\overline w_1,\ldots, \overline w_k$. 

A few simple examples fitting the above framework are the case of all matrices $A_i$ being the same, or all matrices $A_i$ commuting with each other. In both cases, it is easy to see that $D$ is the complete graph and a positive answer to the above questions is obtained whenever the $A_i$'s are irreducible. A more involved example is the one of gossiping: in this case, the matrices $A_i$ are doubly stochastic matrices, one-to-one correspondent to the edges of a connected undirected graph. All the vectors $w_i$ are chosen to be $\bfo$. The directed graph $D$ is again the complete graph. The uniqueness of the limit for any infinitely exhaustive walk in $D$ is a consequence of the doubly-stochastic nature of the $A_i$'s. 
Finally, in the case of the present paper, the $A_i$'s are local stochastic matrices in one-to-one correspondence with triangles of a TLG,  and the directed graph $D$ is the directed version of the derived graph $D_G$. Not much is known beyond these cases. By proposing the framework, and the attendant  questions raised above, we look for solutions that generalize and unify the existing results and the results established in the present paper.

\bibliographystyle{plain}
\bibliography{secureconsensus}

\appendix
\section{Proof of Theorem~\ref{th:equihennebergrtl}}
\paragraph{On rigidity theory.} A graph is rigid if, upon embedding the graph in a Euclidean space $\R^k$, 
fixing all edge lengths precludes motions of the vertices, save for translations and rotations of the embedded graph. A graph is {\em minimally rigid} if it is rigid, and no edge can be removed without losing that property. Rigidity in dimension two (i.e., $k = 2$) is relatively well-understood, but many basic questions remain open in dimensions three and above. Minimally rigid graphs in dimension two are called {\em Laman graphs}. We refer the reader to~\cite{graver1993combinatorial} for formal definitions.

A major result in rigidity theory is the so-called Laman condition, which completely characterizes minimally rigid graphs in dimension two. 

\begin{Lemma}[Laman's condition]\label{lem:lamancondition}
An undirected graph $G=(V,E)$ on $n$ nodes is minimally rigid if and only if:
\begin{enumerate}
    \item There are $(2n - 3)$ edges in $G$;
    \item Every induced subgraph of $G$ on $k$ nodes, for $2\le k \le n-1$, has at most $(2k-3)$ edges.
\end{enumerate}
\end{Lemma}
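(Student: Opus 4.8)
The plan is to prove Laman's condition via the rigidity-matrix formulation of infinitesimal rigidity together with a Henneberg-type induction. First I would attach to a graph $G$ on $n$ nodes and a placement $p = (p_1,\ldots,p_n) \in \R^{2n}$ the \emph{rigidity matrix} $R(G,p) \in \R^{|E| \times 2n}$, whose row indexed by an edge $(v_i,v_j)$ has the block $(p_i - p_j)^\top$ in the columns of $v_i$, the block $(p_j - p_i)^\top$ in the columns of $v_j$, and zeros elsewhere. The kernel of $R(G,p)$ always contains the three-dimensional space spanned by infinitesimal translations and rotations, so $(G,p)$ is infinitesimally rigid exactly when $\rk R(G,p) = 2n - 3$. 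Since $\rk R(G,p)$ attains its maximum on a Zariski-open (hence generic) set of placements, one may speak of \emph{generic rigidity}, and invoking the Asimow--Roth theorem I would reduce the statement to the purely algebraic claim: the rows of $R(G,p)$ are linearly independent for generic $p$ if and only if every induced subgraph on $k$ nodes has at most $2k-3$ edges, the spanning count $|E| = 2n-3$ then being equivalent to $\rk R(G,p) = 2n-3$.

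The necessity direction is the routine one. If $G$ is minimally rigid then the $|E|$ rows of $R(G,p)$ are independent and span a $(2n-3)$-dimensional space, forcing $|E| = 2n-3$. For any induced subgraph $H$ on $k$ nodes, its edge-rows form a subset of an independent set and live, after deleting the zero columns, in the row space of $R(H, p|_H)$, whose rank is at most $2k-3$; hence $|E(H)| \le 2k-3$.

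The sufficiency direction is where the work lies, and I would carry it out by induction on $n$ using the two Henneberg moves, splitting the argument into a \emph{combinatorial} and a \emph{geometric} lemma. The combinatorial lemma asserts that any graph meeting Laman's count admits a \emph{reverse} Henneberg move: since $\sum_v \deg(v) = 2|E| = 4n-6$, some vertex has degree $2$ or $3$, while the sparsity bound forbids degree $\le 1$; a degree-$2$ vertex is deleted directly (reverse type~I), and for a degree-$3$ vertex with neighbors $a,b,c$ one shows, via a submodularity argument on the count function $f(X) = 2|X| - 3$, that at least one of the non-present edges $ab, bc, ca$ can be added after deleting $v$ so that the smaller graph again satisfies Laman's condition (reverse type~II). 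The geometric lemma asserts that the two forward moves preserve generic independence: for type~I this is immediate, since the two new rows are the only ones meeting the two new columns and can be made independent by a generic choice of the new vertex; for type~II I would place the new degree-$3$ vertex at a carefully chosen \emph{degenerate} position, typically collinear with the endpoints of the edge that was split, at which independence can be checked by hand, and then promote this to generic independence using the fact that maximal rank is a Zariski-open condition.

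The main obstacle I anticipate is the type~II step in both lemmas. Combinatorially, one must guarantee that \emph{some} admissible edge exists among $ab, bc, ca$ without violating sparsity anywhere; this is exactly where submodularity of $X \mapsto 2|X|-3$ and the structure of the associated count matroid must be exploited, as a naive choice can fail. Geometrically, the delicate point is exhibiting the special placement at which the split configuration is provably independent and then transferring independence to a generic placement; getting the degenerate configuration right, and verifying that the rank does not drop there for spurious reasons, is the crux of the entire proof.
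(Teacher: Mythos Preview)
The paper does not actually prove this lemma: it is stated in the Appendix as a classical result from rigidity theory (with a reference to Graver et al.), and is then used as a black box in the proofs of Lemma~\ref{lem:badoperation} and Proposition~\ref{prop:cycleshareedge}. So there is no ``paper's own proof'' to compare against.

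That said, your outline is the standard route to Laman's theorem and is essentially correct. The rigidity-matrix setup, the easy necessity direction, and the Henneberg induction for sufficiency are exactly how the result is usually established, and you have correctly identified the two genuine difficulties: showing that a reverse type~II move is always available without violating sparsity (which does require the submodularity/matroid argument you mention), and verifying that the forward type~II move preserves generic independence via a well-chosen degenerate placement. One small caveat: in the combinatorial lemma, the sparsity condition does not by itself forbid degree~$1$ vertices---a single edge on two nodes satisfies the count---so the induction should be anchored at $n=2$ or $n=3$ rather than appealing to a minimum-degree bound; but this is a trivial repair. Overall your plan would yield a complete proof, whereas the paper simply quotes the result.
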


\paragraph{Henneberg construction.} A basic tool in rigidity theory is the so-called Henneberg construction. It is known that every minimally rigid graph admits a {\it Henneberg construction} and, reciprocally, every Henneberg construction yields a minimally rigid graph. We describe the Henneberg construction below: Starting with an edge, the Henneberg construction iteratively adds a node by applying one of the following two operations at each stage:

\begin{description}
\item[1. Node-add:] Select two nodes $v_i,v_j$ in $G_{n-1}$ , add a node $v_n$ and the edges $(v_i,v_n)$ and $(v_j,v_n)$ to obtain $G_n$.
\item[2. Edge-split:] Select an edge $(v_i,v_j)$ and a node $v_k$ in $G_{n-1}$, add a node $v_n$ and edges  $(v_i,v_n), (v_j,v_n)$ and $(v_k,v_n)$ remove edge $(v_i,v_j)$.
\end{description}

The sequence of graphs obtained following the construction is called a {\em Henneberg sequence}. Each graph in a Henneberg sequence is minimally rigid. 
It should be clear that the RHC introduced in Sec.~\ref{sec:tlg} is a type of Henneberg construction that starts with a triangle.

\paragraph{A Henneberg construction for TLGs} We introduce below a few preliminary results that are needed for the proof of Theorem~\ref{th:equihennebergrtl}.

\begin{Lemma}\label{lem:sufficiency}
Let $G$ be a TLG and  $G_3,\cdots, G_n$ be a sequence of graphs obtained by following the steps of an RHC, with $G_3$ a triangle. Then, every graph in the sequence is a TLG. 
\end{Lemma}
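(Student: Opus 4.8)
The statement to prove is Lemma~\ref{lem:sufficiency}: following any RHC produces, at every stage, a graph that is triangulated \emph{and} minimally rigid (a TLG). The natural approach is induction on the number of nodes. The base case $G_3$ is a single triangle, which is trivially both triangulated and minimally rigid. For the inductive step, suppose $G_{k}$ is a TLG and $G_{k+1}$ is obtained by picking an edge $e=(v_l,v_m)$ of $G_k$, adding a vertex $v_{k+1}$, and joining it to $v_l$ and $v_m$. I must verify two things about $G_{k+1}$: that it is minimally rigid, and that it is triangulated.

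\emph{Minimal rigidity.} This is the easy half. The RHC step is exactly a Henneberg \textbf{node-add} operation (select $v_l,v_m$, add $v_{k+1}$ and edges $(v_l,v_{k+1}),(v_m,v_{k+1})$), and it is standard that a node-add applied to a minimally rigid graph yields a minimally rigid graph; equivalently, one can check the Laman conditions (Lemma~\ref{lem:lamancondition}) directly. Indeed, $G_{k+1}$ has $(2k-3)+2 = 2(k+1)-3$ edges, so condition~1 holds. For condition~2, any induced subgraph on $j$ nodes either excludes $v_{k+1}$ --- in which case it is an induced subgraph of $G_k$ and inherits the bound $2j-3$ by the induction hypothesis --- or includes $v_{k+1}$, which contributes at most $2$ edges (its only possible neighbors in $G_{k+1}$ are $v_l$ and $v_m$), so the remaining $j-1$ vertices induce at most $2(j-1)-3$ edges in $G_k$, giving at most $2(j-1)-3+2 = 2j-3$ edges total. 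Hence $G_{k+1}$ is minimally rigid.

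\emph{Triangulated.} This is where the real work lies, and it is essentially the content already carried out inside the proof of Proposition~\ref{prop:tl} for the derived graph; here it must be done for $G_{k+1}$ itself. Consider any cycle $C$ of length $>3$ in $G_{k+1}$. If $C$ does not pass through $v_{k+1}$, it is a cycle in $G_k$, which is triangulated by hypothesis, so $C$ has a chord. If $C$ passes through $v_{k+1}$, then since $\deg(v_{k+1})=2$ with neighbors $v_l$ and $v_m$, the cycle must enter and leave $v_{k+1}$ along those two edges; thus $v_l$ and $v_m$ are two nonconsecutive vertices of $C$ (nonconsecutive because $|C|>3$), and the edge $(v_l,v_m)\in E(G_{k+1})$ is the desired chord. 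Hence every long cycle in $G_{k+1}$ has a chord, so $G_{k+1}$ is triangulated.

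\emph{Main obstacle.} There is no deep obstacle --- both halves are short --- but the point that requires the most care is the chord argument in the triangulated case: one must use that $v_{k+1}$ has degree exactly $2$ (so a cycle through it is forced through both $v_l$ and $v_m$) together with the fact that $(v_l,v_m)$ remains an edge of $G_{k+1}$ (the RHC, unlike an edge-split, does \emph{not} delete it). Combining the two halves closes the induction and proves that every graph in an RHC sequence is a TLG.
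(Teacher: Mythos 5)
Your proof is correct and follows essentially the same route as the paper: the minimal rigidity half is dispatched by noting the RHC step is a Henneberg node-add (the paper simply cites this; you additionally verify the Laman count), and the triangulated half is the same induction, splitting on whether a long cycle contains the new degree-2 vertex and using $(v_l,v_m)$ as the chord when it does.
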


\begin{proof}
Since the RHC is a type of Henneberg construction, each $G_i$ in the sequence is Laman. We show that it is also triangulated. 
We proceed by induction of the number of nodes $n$ in the graph. For $n=3$, $G_3$ is a triangle and the statement holds. Now assume that $G_{n-1}$ is a TLG. Let $v_n$, $(v_n,v_i)$, $(v_n,v_j)$ be the  node and edges newly added to $G_{n-1}$ to obtain $G_n$. Consider any cycle $\gamma$ of length greater than 3 in $G_n$. 
Either $\gamma$ does not contain $v_n$: it is then included in $G_{n-1}$ and since $G_{n-1}$ is triangulated, it contains a chord. Otherwise,  $\gamma$ contains $v_n$: then it necessarily contains $v_i$ and $v_j$ as they are the only two nodes adjacent to $v_n$. Since $(v_i,v_j)$ is an edge in $G_{n-1}$ and thus in $G_n$, the cycle $\gamma$ has a chord, namely $(v_i, v_j)$.  
\end{proof}

In the following Lemma, we show that any {\em Henneberg} construction for a TLG yields a sequence in which each graph is also a TLG. 

\begin{Lemma}\label{lem:tlsubgraphs}
Let $G$ be a TLG. Fix a Henneberg construction for $G$ and denote by $G_3,\cdots, G_n=G$ the Laman graphs obtained in that construction, with $G_3$ a triangle. Then, each $G_i$ is triangulated. 
\end{Lemma}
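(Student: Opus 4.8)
The statement to prove is Lemma~\ref{lem:tlsubgraphs}: given a TLG $G$ with a \emph{fixed but arbitrary} Henneberg construction $G_3, \dots, G_n = G$ (where $G_3$ is a triangle), every intermediate graph $G_i$ is triangulated. Note that each $G_i$ is automatically Laman, being an intermediate graph in a Henneberg construction, so ``triangulated'' is the only thing to establish. Unlike Lemma~\ref{lem:sufficiency}, we cannot proceed forward by induction on $i$, because a generic Henneberg step (an edge-split, in particular) does not obviously preserve triangulatedness. Instead, the natural approach is a \emph{downward} or \emph{deletion} induction: show that if $G_i$ is triangulated (with $G_i$ Laman), then so is $G_{i-1}$. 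Since $G_n = G$ is triangulated by hypothesis, this gives the result for all $i$.

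\textbf{Key steps.} First I would set up the downward induction: assume $G_i$ is a triangulated Laman graph and analyze the last Henneberg operation that produced $G_i$ from $G_{i-1}$, say it added node $v = v_i$. There are two cases. In the \emph{node-add} case, $v$ has degree $2$ in $G_i$, with neighbors $v_a, v_b$; removing $v$ yields $G_{i-1}$. Any cycle of length $>3$ in $G_{i-1}$ is already a cycle in $G_i$ and hence has a chord in $G_i$; I would argue this chord survives in $G_{i-1}$ because the only edges lost are the two incident to $v$, and a chord of a cycle not through $v$ cannot be incident to $v$ — so $G_{i-1}$ is triangulated. In the \emph{edge-split} case, $v$ has degree $3$ in $G_i$ with neighbors $v_a, v_b, v_c$, and $G_{i-1}$ is obtained by deleting $v$ and \emph{restoring} an edge $(v_a, v_b)$ (say). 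Here the argument is more delicate: I must show (a) that in $G_i$, the triangle structure forces one of the three pairs among $\{v_a,v_b,v_c\}$ to be an edge — this is where triangulatedness of $G_i$ is used, applied to a suitable $4$-cycle through $v$ — and (b) that after the edge-split is undone, no long chordless cycle appears. For (a): consider whether $v$ lies in a triangle of $G_i$; if $\{v, v_a, v_b\}$ is a triangle then $(v_a,v_b)\in E(G_i)$, and undoing the edge-split (which restores exactly $(v_a,v_b)$) is consistent only if the edge-split removed $(v_a,v_b)$, which is precisely the assumption. The point is to show that $(v_a, v_b)$ is the \emph{only} one of the three pairs that can fail to be an edge in $G_i$, using the Laman count on the induced subgraph $\{v_a, v_b, v_c\}$ or $\{v, v_a, v_b, v_c\}$ together with triangulatedness. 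Then for (b), a cycle $\gamma$ of length $>3$ in $G_{i-1}$ either avoids $(v_a,v_b)$ — then it is a cycle in $G_i$ as well (possibly after noting it avoids $v$), so it has a chord there, and that chord persists — or it uses the edge $(v_a, v_b)$, in which case replacing that edge by the path $v_a\,v\,v_b$ gives a longer cycle $\gamma'$ in $G_i$ through $v$; triangulatedness of $G_i$ furnishes a chord of $\gamma'$, and I would check this chord either is $(v_a, v_b)$ itself (impossible, as that's an edge of $\gamma'$ in $G_i$ only if it survived — it did not) or connects two vertices of $\gamma \subseteq G_{i-1}$ and hence is a chord of $\gamma$ there, using that $v$ has only the three neighbors $v_a, v_b, v_c$ so a chord incident to $v$ in $\gamma'$ would have to be $(v, v_c)$, and I would rule that out or convert it to a chord of $\gamma$ via the edge $(v_a,v_b)$ or $(v_b,v_c)$.

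\textbf{Main obstacle.} The hard part will be the edge-split case, specifically controlling what happens to a cycle that genuinely uses the restored edge $(v_a, v_b)$: one must carefully track how a chord of the ``lifted'' cycle $\gamma'$ in $G_i$ descends to a chord of $\gamma$ in $G_{i-1}$, handling the subcase where the chord of $\gamma'$ is incident to the degree-$3$ vertex $v$ (the chord $(v, v_c)$). I expect this to require a small separate argument — possibly showing $v_c$ is adjacent to $v_a$ or $v_b$ in $G_i$, so that one can reroute — and this is exactly where both the triangulated hypothesis on $G_i$ and the Laman edge-count (Lemma~\ref{lem:lamancondition}, part 2, applied to the $4$-node subgraph on $\{v, v_a, v_b, v_c\}$, which has at most $5$ edges) must be combined. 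Everything else is bookkeeping on which edges are added or removed at the step, and the base case $i = n$ is immediate from the hypothesis that $G$ is a TLG.
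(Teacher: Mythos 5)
Your overall strategy---downward induction along the Henneberg sequence, showing that triangulatedness of $G_i$ forces triangulatedness of $G_{i-1}$---is the same as the paper's, but you run it in the direct form (lift a long cycle of $G_{i-1}$ to $G_i$, invoke triangulatedness of $G_i$ to get a chord, push the chord back down), whereas the paper runs the contrapositive (lift a \emph{chordless} long cycle of $G_{i-1}$ to $G_i$ and check that chordlessness is preserved, the lifted cycle possibly splitting into two cycles whose lengths sum to $k+3\ge 7$). The contrapositive form is what makes the paper's proof short: the only new edges of $G_i$ are the three incident to the new vertex, and each of these is accounted for as a cycle edge of one of the lifted cycles, so no new chord can appear and one never has to track \emph{which} chord triangulatedness produces. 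Your node-add case and your ``cycle avoiding the restored edge'' case are correct as sketched.

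The gap is exactly where you flag it, and the repair you propose does not work as stated. To dispose of the chord $(v,v_c)$ of the lifted cycle $\gamma'$ you want claim (a); but the tool you cite, Laman's condition (Lemma~\ref{lem:lamancondition}, part 2) applied to $\{v,v_a,v_b,v_c\}$, only gives an \emph{upper} bound of $5$ edges and so can never certify that an edge is \emph{present}. The fact you actually need---that an edge-split preserving triangulatedness must be performed on a simple edge $(v_a,v_b)$ and a node $v_c$ with $\{v_a,v_b,v_c\}$ a triangle---is true, but it is a separate piece of work (it is the contrapositive of Lemma~\ref{lem:badoperation}, Options 2--3, whose proof needs two-edge-connectedness of Laman graphs and a shortest-path argument), and the paper deliberately avoids needing it here. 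Note also that claim (a) as you state it (``one of the three pairs is an edge'') is too weak: if $v_c$ is consecutive to $v_a$ on $\gamma$ then $(v_a,v_c)$ is not a chord of $\gamma$, so you would need $(v_b,v_c)\in E$ as well; you need both adjacencies in general. A cleaner way to close the gap entirely within your framework, without claim (a): the chord $(v,v_c)$ splits $\gamma'$ into two cycles through $v$ of total length $|\gamma'|+2\ge 7$; pick one of length at least $4$ and apply triangulatedness of $G_i$ to \emph{it}. No chord of that smaller cycle can be incident to $v$ (its only neighbors $v_a,v_b,v_c$ are either consecutive to $v$ on that cycle or absent from it), so the chord joins two vertices of a sub-path of $\gamma$ that are non-consecutive on $\gamma$, is an edge of $G_i$ between old vertices and hence an edge of $G_{i-1}$, and is therefore a chord of $\gamma$ in $G_{i-1}$.
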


\begin{proof}
We first show that $G_{n-1}$ is triangulated. 
Assume, by contradiction, that $\gamma=v_1\cdots v_kv_1$, $k \ge 4$, is a chord-free cycle in $G_{n-1}$ of length greater than $3$. On the one hand, if $G_n$ is obtained from $G_{n-1}$ with a node-add operation, then clearly $\gamma$ is also a chord-free cycle in $G_n$. On the other hand, assume that $G_n$ is obtained using an edge-split operation. If the selected edge of the edge-split operation is not part of $\gamma$, then $\gamma$ is a chord-free cycle in $G_n$. We thus assume that the selected edge is part of $\gamma$, say $(v_1,v_2)$, and denote by $v_\ell$ the selected node of $G_{n-1}$. Note that $v_\ell \neq v_1,v_2$. If $v_\ell \notin \gamma$, then $v_1v_nv_2v_3\cdots v_kv_1$ is a chord-free cycle of length $(k+1)$ in $G_n$. If $v_\ell \in \gamma$, then $v_1v_nv_\ell v_{\ell-1}\cdots v_1$ and $v_2v_3\cdots v_\ell v_nv_2$ are two distinct chord-free cycles in $G_n$ (see Fig.~\ref{fig:twocyclesedgesplit} for illustration). The sum of the lengths of the two cycles is $(k+3) \geq 7$, so at least one of them is of length greater than 3. We have thus shown that if $G_{n-1}$ has a chord-free cycle of length greater than 3, then so does $G_n$. It thus contradicts the assumption that $G_n$ is triangulated. 
 
We have just shown that if $G_n$ is triangulated, then so is $G_{n-1}$. 
Applying the above arguments iteratively, we obtain that $G_{n-2},\ldots, G_3$ (in a reversed order) are all TLGs  as announced.  
\end{proof}

\begin{figure}
    \centering
    \includegraphics{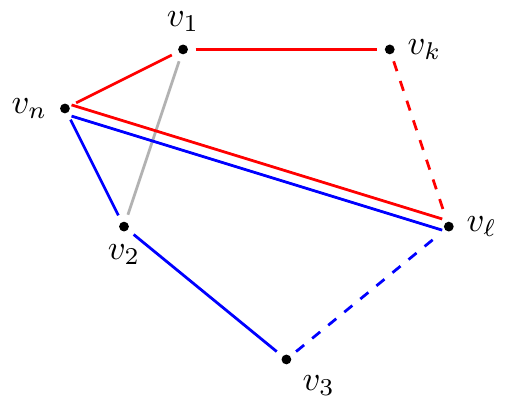}

    \caption{In the figure, $v_1v_2\cdots v_k$ is the chord-free cycle $\gamma$ inside $G_{n-1}$. We perform the edge-split operation with $(v_1, v_2)$ the selected edge and $v_\ell$ the selected node that belongs to the cycle. Then, there are two chord-free cycles after the operation, namely $v_1v_nv_\ell v_{\ell-1}\cdots v_1$ in red and $v_2v_3 \cdots  v_\ell v_nv_2$ in blue.}
    \label{fig:twocyclesedgesplit}
\end{figure}

The next result indicates which operations of a Henneberg construction create chord-free cycles of lengths greater than 3. Per the previous Lemma, these operations cannot be used to construct a TLG.

\begin{Lemma}\label{lem:badoperation}
Let $G'$ be a Laman graph. Let $G$ be the graph obtained by performing on $G'$  {\em one} Henneberg step taken from the following options:
\begin{enumerate}
\item Node-add operation  connecting a new node to two non-adjacent nodes;
\item Edge-split operation  splitting a non-simple edge;
\item Edge-split operation performed on an edge $e = (v_i, v_j)$ and a node $v_k$ such that the three nodes $\{v_i, v_j, v_k\}$ do not form a triangle in $G'$. 
\end{enumerate}
Then, the resulting graph $G$ is not triangulated.  
\end{Lemma}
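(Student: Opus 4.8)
The plan is to handle each of the three operations separately, in each case exhibiting an explicit chord-free cycle of length at least $4$ in the resulting graph $G$, which contradicts triangulation. The unifying idea is that a newly added node $v_n$ has exactly two or three neighbors (depending on the operation), and these neighbors together with a suitable path in $G'$ close up into a cycle whose only possible chords would have to be incident to $v_n$ --- but $v_n$ has no other neighbors, so no chord exists; we then only need to ensure the cycle has length $>3$.

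\textbf{Case 1 (node-add on non-adjacent nodes).} Here $v_n$ is joined to $v_i$ and $v_j$ with $(v_i,v_j)\notin E(G')$. Since $G'$ is connected (Laman graphs are connected), there is a path in $G'$ from $v_i$ to $v_j$; take a shortest such path $v_i = u_0 u_1 \cdots u_\ell = v_j$. Because $(v_i,v_j)$ is not an edge, $\ell \ge 2$, so the cycle $v_n u_0 u_1 \cdots u_\ell v_n$ has length $\ell + 1 \ge 3$; if $\ell = 2$ this is only a triangle, so I would instead argue that shortest-path minimality already forbids chords among the $u_k$'s, and if $\ell=2$ pick the path to have length $\ge 3$ --- this requires care. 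The cleaner route: since $G'$ is $2$-connected-ish is false in general, so I would instead note that in a Laman graph on $\ge 4$ nodes every edge lies on a cycle, and work with the cycle through $v_i$ and $v_j$ avoiding the (nonexistent) edge $(v_i,v_j)$. The key point remains that any chord of the resulting cycle in $G$ must avoid $v_n$ (as $v_n$ has degree $2$), hence would be a chord of a chord-free path/cycle in $G'$, contradiction, provided we chose the cycle chord-free in $G'$ and of length $\ge 3$ there.

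\textbf{Cases 2 and 3 (edge-split).} In an edge-split on $e=(v_i,v_j)$ with apex $v_k$, the new node $v_n$ is adjacent to $v_i,v_j,v_k$ and the edge $(v_i,v_j)$ is deleted. In Case 2, $e$ is non-simple, so it lies in at least two triangles of $G'$; pick a triangle $\{v_i,v_j,v_m\}$ with $v_m \ne v_k$. After the split, $v_i v_n v_j v_m v_i$ is a $4$-cycle in $G$. Its only candidate chord is $(v_i,v_j)$ (the other two diagonals would need $v_n v_m$, not an edge) --- but $(v_i,v_j)$ was removed. Hence this $4$-cycle is chord-free and $G$ is not triangulated. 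In Case 3, $\{v_i,v_j,v_k\}$ is not a triangle in $G'$, i.e.\ $(v_i,v_k)\notin E(G')$ or $(v_j,v_k)\notin E(G')$; say $(v_i,v_k)\notin E(G')$. Then in $G$ we have the $4$-cycle $v_i v_n v_k \cdots$; more directly, $v_i v_n v_k$ plus a path from $v_k$ back to $v_i$ in $G'$ (which exists, and can be taken not to use $v_j$ with a little care, or if it must use $v_j$ we reroute through $v_n$), giving a cycle whose only possible chord through $v_n$ is $(v_i,v_j)$ --- removed --- or $(v_j,v_n)$ --- only if $v_j$ is on the path. The robust version: use that $(v_i,v_k)$ is a non-edge in $G'$, so by Case-1-type reasoning there is a chord-free cycle of length $\ge 4$ in $G$ through $v_n,v_i,v_k$.

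\textbf{Main obstacle.} The subtle point throughout is \emph{length control}: showing the exhibited cycle has length strictly greater than $3$ and is genuinely chord-free, not merely that its chords avoid $v_n$. For the edge-split cases this is immediate because deleting $(v_i,v_j)$ kills the only short-circuit. For the node-add case it is more delicate, and the right tool is the Laman condition (Lemma~\ref{lem:lamancondition}): I would argue that if every cycle through $v_i$ and $v_j$ in $G'$ has length $\le 2$ --- impossible since there is no such cycle of length $2$ in a simple graph --- or reduce to showing $G'$ contains a $v_i$--$v_j$ path of length $\ge 2$ all of whose internal structure is chord-free, which follows by taking a shortest path and using that $(v_i,v_j)\notin E$. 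Handling the $\ell=2$ sub-case (shortest path has length exactly $2$, giving only a triangle in $G$) is where I expect to spend the most effort: there one must show that $\{v_i, u_1, v_j\}$ being a triangle in $G$ forces, via the Laman edge-count on the $4$ nodes $\{v_i,u_1,v_j,v_n\}$ or via a global counting argument, a contradiction with minimal rigidity --- or simply observe that then $(v_i,v_j)$ \emph{was} an edge after all (chord of that triangle), contradicting the hypothesis of Case 1.
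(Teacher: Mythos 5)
Your overall strategy (exhibit an explicit chord-free cycle of length at least $4$ in each case, exploiting that the new node $v_n$ has a very small neighborhood) is the same as the paper's, and your Case 2 is correct and in fact slightly cleaner than the paper's: by routing the $4$-cycle through $v_n$ as $v_iv_nv_jv_mv_i$ with $v_m\neq v_k$, you rule out the diagonal $(v_n,v_m)$ just from $v_n$'s neighborhood, whereas the paper uses the cycle $v_iv_mv_jv_\ell v_i$ through the two apexes and must invoke the Laman edge count to exclude the chord $(v_\ell,v_m)$. However, Cases 1 and 3 as written have genuine gaps. In Case 1 the difficulty you spend most of your effort on is a phantom created by a miscount: the cycle $v_nu_0u_1\cdots u_\ell v_n$ traverses $\ell+2$ edges, not $\ell+1$, so a shortest $v_i$--$v_j$ path of length $\ell\ge 2$ (forced by $(v_i,v_j)\notin E(G')$) already yields a cycle of length at least $4$; there is no ``$\ell=2$ gives a triangle'' sub-case, and the shortest-path property plus $\deg(v_n)=2$ gives chord-freeness. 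This is exactly the paper's argument, but your writeup never lands on it and instead ends with vague alternatives (``$2$-connected-ish,'' ``every edge lies on a cycle'') that you do not carry out.

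Case 3 is the real gap. First, the connecting path from $v_k$ to $v_i$ cannot be taken ``in $G'$'' as you state: it might use the edge $(v_i,v_j)$, which is deleted in $G$. It must be taken in $G'':=G'-(v_i,v_j)$, and the connectedness of $G''$ is not free --- the paper justifies it by the $2$-edge-connectivity of Laman graphs. Second, the sub-case where the third neighbor of $v_n$ lies on the chosen path (for you, $v_j$ on the $v_i$--$v_k$ path; in the paper, $v_k$ on the $v_i$--$v_j$ path) needs an actual argument: ``reroute through $v_n$'' does not produce a cycle (it would visit $v_n$ twice), and ``can be taken not to use $v_j$ with a little care'' is unsubstantiated. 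The paper's resolution is to observe that the resulting cycle has the single chord joining $v_n$ to that third neighbor, split the cycle along it into two chord-free cycles, and check that the sum of their lengths forces one to exceed $3$ (using that $\{v_i,v_j,v_k\}$ is not a triangle to lower-bound the path length). Your sketch would be repaired the same way: if $v_j$ lies on the shortest $v_i$--$v_k$ path in $G''$, the piece of the split cycle containing $v_i$, $v_j$, and $v_n$ has length $d_{G''}(v_i,v_j)+2\ge 4$ since $(v_i,v_j)\notin E(G'')$. As it stands, though, this step is asserted rather than proved.
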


\begin{proof}
We deal with the three options individually. 

{\it Option 1:} 
Denote by $v_n$ the new node and $v_{i},v_{j}$ the existing nodes in $G'$ that are connected to $v_n$ via the one-step Henneberg construction.  By assumption, $v_i$ and $v_j$ are not adjacent. Let $\gamma=v_{i}\cdots v_{j}$ be the shortest path in $G'$ joining these two nodes. Then, $\omega:=\gamma \vee v_nv_{i}$ is a cycle in $G$. This cycle is chord-free because $\gamma$ is a shortest path. Furthermore, since  $v_{i}$ and $v_{j}$ are not adjacent, the length of $\omega$ is greater than $3$. 

For the remaining two options, we let $(v_{i},v_{j})$ and $v_k$ be the selected edge and node of $G'$, respectively, for the edge-split operation.   

{\it Option 2:} Since $(v_i,v_j)$ is not simple, there exist two distinct triangles $\Delta = \{v_i, v_j, v_\ell\}$ and $\Delta'=\{v_i, v_j, v_m\}$ in $G'$ that share the edge. 
Because $G'$ is a Laman graph, $(v_\ell, v_m)$ cannot be an edge of $G'$. To see this, note that if $(v_\ell, v_m)$ is an edge, then there will be $6$ edges in the subgraph of $G'$ induced by the four nodes $v_i,v_j,v_\ell, v_m$, which violates Laman's condition (Lemma~\ref{lem:lamancondition}). 
But, then, after the edge-split operation, the edge $(v_i, v_j)$ is removed and, hence, 
$v_iv_mv_jv_\ell v_i$ is a chord-free cycle in $G$ of length $4$; see Fig.~\ref{sfig:TLGthopt2}.

\begin{figure}
    \centering
\subfloat[\label{sfig:TLGthopt2}]{
    \includegraphics{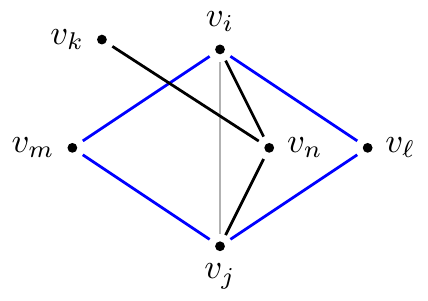}
}\qquad
\subfloat[\label{sfig:TLGthopt3}]{
    \includegraphics{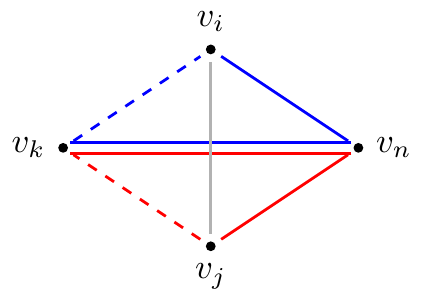}

}
\caption{In Fig.~\ref{sfig:TLGthopt2}, $(v_i,v_j)$ is a non-simple edge shared by  two triangles in $G'$. Performing edge-split operation on $(v_i,v_j)$ with any node $v_k$ in $G'$ ($v_k$ can be $v_m$ or $v_l$) results in a chord-free cycle $v_iv_m v_j v_\ell$ of length $4$.   
In Fig.~\ref{sfig:TLGthopt3}, the dashed lines indicate the shortest path $\gamma$ from $v_i$ to $v_j$ in $G''= G' - (v_i,v_j)$.
Performing edge-split operation on the edge $(v_i,v_j)$ and a node $v_k$ that belong to $\gamma$ yields two chord-free cycles depicted in blue and red, respectively. The total lengths of the two cycles is the length of $\gamma$ plus 4.
}
\label{fig:TLGthopt2and3}
\end{figure}

{\it Option 3:}
Since $G'$ is Laman, it is two-edge-connected. Let $G''$ be obtained from $G'$ by removing the edge $(v_{i},v_{j})$. Then, $G''$ is connected. 
Denote by $\gamma=v_{i}\cdots v_{j}$ the shortest path in $G''$ joining $v_{i}$ to $v_{j}$. The length of $\gamma$ is at least $2$. 
If $v_k$ does not belong to $\gamma$, then $\gamma \vee v_nv_{i}$ is chord-free (because $\gamma$ is a shortest path) and its length is at least $4$.  
We now assume that $v_k$ belongs to $\gamma$. Note that  $\gamma \neq v_{i}v_kv_{j}$ because otherwise these three nodes formed a triangle in $G'$, contradicting our assumption. Hence, the length of $\gamma$ is at least $3$. In this case, the cycle $\omega:=\gamma \vee v_nv_{i}$ has a single chord, namely $(v_k,v_n)$. 
Indeed, on the one hand, $v_n$ is only connected to $v_k$ by construction, so no other chord is incident to $v_n$; on the other hand, the fact that $\gamma$ is a shortest path precludes the existence of a chord between any two nodes in the path. 
This shows that $(v_k,v_n)$ is the only chord in $\omega$, which can thus be split into two cycles of smaller lengths given by $v_{i}\cdots v_k v_n v_{i}$ and $v_{k} \cdots v_{j} v_n v_k$. 
Moreover, the two cycles are chord-free.   
The sum of the lengths of these two cycles is the length of $\gamma$ plus 4, which is at least 7. 
Thus, at least one of the two cycles has its length greater than 3. 
See Fig.~\ref{sfig:TLGthopt3} for illustration.  
\end{proof}

With the above preliminaries, we now prove Theorem~\ref{th:equihennebergrtl}:

\begin{proof}[Proof of Theorem~\ref{th:equihennebergrtl}]
From Lemma~\ref{lem:sufficiency}, every graph obtained by an RHC is a TLG.  
We now show that the converse is also true, i.e., every TLG $G$ can be obtained by an RHC. Let $\cH$ be a Henneberg construction for $G$. Note that $\cH$ can be described by either a sequence of graphs $G_3,\ldots, G_n = G$ along the construction or by a sequence of operations $\cH_3,\ldots, \cH_{q-1}$ applied to these graphs, i.e., operation $\cH_p$ is applied to $G_p$ to obtain $G_{p+1}$. In the sequel, we will use both descriptions. 
To keep the notation simple, we do not make explicit the argument of the operations $\cH_i$. The arguments are an edge for a node-add operation, and an edge and a node for an edge-split operation. 
Note that an operation $\cH_p$ can be applied to any $G_{q}$ as long as $G_q$ contains the selected edge (and node if $\cH_p$ is an edge-split). 

By Lemma~\ref{lem:tlsubgraphs}, the Henneberg construction $\cH$ can only contain operations of the following two types:  (1) node-add operation as described in the RHC, or (2) edge-split operation performed on a simple edge $(v_i,v_j)$ and a node $v_k$ so that $\{v_i,v_j,v_k\}$ is a triangle. 
We now prove that the operations of type (2) can be translated into operations of type (1), thus showing that any Henneberg construction yielding a TLG can be replaced by an RHC.

Let $q \ge 4$ be the smallest integer such that an edge-split operation $\cH_{q-1}$ as in (2) above is used on $G_{q-1}$ to obtain $G_q$. 
Then, the triangle $\{v_i,v_j, v_k\}$ belongs to $G_{q-1}$ and, furthermore, $G_{q-1}$ is obtained by using only the node-add operation.  Hence, each $\cH_p$, for $p = 3,\ldots, q - 2$, is necessarily of type (1) and the truncated sequence  $\cH_3\cdots\cH_{q-2}$ is in fact an RHC. 
The starting triangle of the RHC might not be $\{v_i,v_j,v_k\}$. However, by Prop.~\ref{prop:hennebergfromany}, we can always find another RHC that starts with $\{v_i,v_j, v_k\}$ and yields $G_{q-1}$. We can thus assume, without loss of generality, that $G_3 = \{v_i,v_j,v_k\}$ is the starting triangle.  
It is important to note that because $(v_i,v_j)$ is {\it simple} in $G_{q-1}$, no operation $\cH_p$, for $3\le p \le q - 2$, selects the edge $(v_i,v_j)$, since it already belongs to the triangle $G_3=\{v_i,v_j,v_k\}$.

Next, we will exhibit an RHC $\cH' = \cH'_3\cdots \cH'_{q-1}$ that yields $G_q$.
Starting from $G'_3 = \{v_i,v_k,v_q\}$, the operation $\cH'_3$  simply adds the node $v_j$ and edges $(v_j,v_k)$ and $(v_j,v_q)$, so $G'_4$ is comprised of two triangles, namely $\{v_i,v_k,v_q\}$ and $\{v_j,v_k,v_q\}$. 
Note that $G'_4$ can be also obtained by applying the sequence $\cH_3\cH_{q-1}$ to the triangle $\{v_i,v_j,v_k\}$.  
Now, since $\cH_3$ was applied to the triangle $\{v_i,v_j,v_k\}$, but did not select edge $(v_i,v_j)$,  we can apply the same operation to $G'_4$ to obtain $G'_5$, i.e., we let $\cH'_4 := \cH_3$. Next, observe that the edge selected by $\cH_4$ belongs to $G_4$ and it is not $(v_i,v_j)$. Because $G_4$ (resp. $G'_5$) is obtained from $G_3$ (resp. $G'_4$) using the same operation $\cH_3$, the edge selected by $\cH_4$ belongs to $G'_{5}$. We can thus set $\cH'_5:=\cH_4$. 
Furthermore, note that $G'_5$ can be obtained by applying the sequence $\cH_3\cH_4\cH_{q-1}$ to the triangle $\{v_i,v_j,v_k\}$. 
Applying the above arguments iteratively, we conclude that for any $p = 4,\ldots,q-1$, the graph $G'_p$ obtained by applying $\cH'_3\cH'_4\cdots \cH'_{p}= \cH'_3 \cH_3\cdots \cH_{p-1}$ to the triangle $G'_3 = \{v_i,v_k,v_q\}$ is the same as the graph obtained by applying $\cH_3\cdots \cH_{p-1}\cH_{q-1}$ to the triangle $G_3= \{v_i,v_j,v_k\}$. In particular, for $p = q-1$, we obtain that $G'_q = G_q$. 

\begin{figure}
    \centering
    \subfloat[\label{sfig:rhc1}]{
    \includegraphics{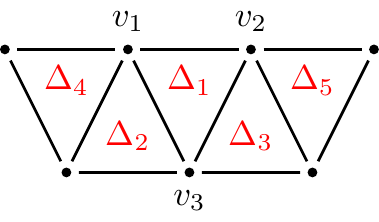}

}
\qquad
\subfloat[\label{sfig:rhc2}]{
    \includegraphics{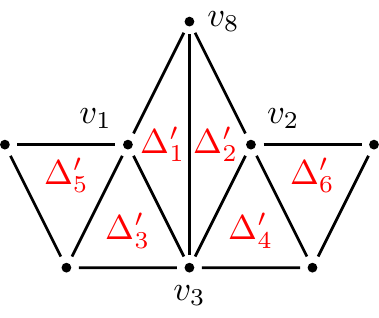}

}
    \caption{The TLG depicted in Fig.~\ref{sfig:rhc1} corresponds to $G_{q-1}$ (here, $q = 8$) in the proof and is obtained via an RHC starting with $\Delta_1$. The indices of the triangles reflect the order of which they have been added into the graph.
    The operation $\cH_4$ selects $(v_1,v_3)$, and $\cH_5$ selects $(v_2,v_3)$, etc. The operation $\cH_7$ applied to $G_7$ is an edge-split operation on the edge $(v_1,v_2)$ and the node $v_3$. It yields the TLG $G_8$ depicted in Fig.~\ref{sfig:rhc2}. Starting with $G'_3=\{v_1,v_3,v_8\}$, we apply the node-add operation $H'_3$ by adding node $v_2$ and edges $(v_2,v_8)$ and $(v_3,v_8)$ to obtain $G'_4$.  
        Then, applying the sequence of operations $\cH_4\cdots \cH_7$, we add triangles $\Delta'_3,\cdots, \Delta_6'$ into the graph and obtain an RHC for $G_8$. 
    }
    \label{fig:my_label}
\end{figure}

Now, replace the original Henneberg construction $\cH_3\cdots\cH_{n-1}$ with the one $\cH'_3\cdots\cH'_{q-1}\cH_q\cdots\cH_{n-1}$. By doing so, we reduce by one the number of edge-split operations. One can repeatedly apply the above arguments until all the edge-split operations in the original Henneberg construction are removed. This process ends with an RHC that yields the graph $G$.     
\end{proof}
\end{document}